\newtheorem{theorem}{Theorem}[section]
\newtheorem{lemma}{Lemma}[section] 
\newtheorem{corollary}{Corollary}[section] 
\newtheorem{proposition}{Proposition}[section] 
\theoremstyle{definition} 
\newtheorem{remark}{Remark}[section]
\def\Int{\operatorname{{Int}}}
\begin{document} 
\title{Free Boolean Topological Groups}

\author{Ol'ga V. Sipacheva}

\address{Department of General Topology and Geometry, Lomonosov Moscow State University}
\email{o-sipa@yandex.ru}

\begin{abstract} Known and new results on free Boolean topological groups are collected. 
 An account of properties which these groups share with free or free Abelian topological groups 
and properties specific of free Boolean groups is given. Special emphasis is placed on 
the application of set-theoretic methods to the study of Boolean topological groups.
\end{abstract}

\keywords{Free Boolean topological group, free Boolean linear topological group, free topological 
group, free Abelian topological group, almost discrete space, Ramsey filter, extremally 
disconnected group}

\thanks{This work was supported by the Russian Foundation for Basic Research 
(project no.~15-01-05369).}

\subjclass[2010]{54H11 22A05 54G05 03E75 03E35 03C25}

\maketitle

This is a revised and expanded version of~\cite{Axioms}.

\section{Introduction}

In the very early 1940s,
A.~A.~Markov~\cite{Markov1941, Markov1945} introduced
the free topological group $F(X)$ and the free Abelian topological group $A(X)$ on 
an arbitrary completely regular Hausdorff topological space $X$ as a topological-algebraic  
counterpart of the abstract free and the  Abelian  groups on a set; he also 
proved the existence and uniqueness of these groups. 
During the next decade, Graev~\cite{Graev1948, Graev1950},
Nakayama~\cite{Nakayama}, and Kakutani~\cite{Kakutani}
simplified the proofs of the main statements of Markov's theory
of free topological groups, generalized Markov's construction, and
proved a number of important theorems on free topological groups. In particular, Graev 
generalized the notion of the free and the free Abelian topological group on a space $X$ 
by identifying the identity element of the free group with an (arbitrary) point of $X$ (the 
free topological group on $X$ in the sense of Markov coincides with Graev's group on $X$ plus an 
isolated point), described the topology of free topological groups on compact spaces, and 
extended any continuous pseudometric on $X$ to a continuous invariant 
pseudometric on $F(X)$ (and on $A(X)$) which is maximal among all such extensions~\cite{Graev1948}.

This study stimulated Mal'tsev,
who believed that the most appropriate place of the theory of abstract
free groups was in the framework of the general theory of algebraic
systems, to introduce general free topological
algebraic systems. In~1957, he published the large paper~\cite{Mal'tsev1957}, 
where the basics of the theory
of free topological universal algebras were presented.

Yet another decade later, Morris initiated the study of free topological 
groups in the most general aspect.  Namely, he introduced the notion of a variety of topological
groups\footnote{A definition of a variety of topological groups (determined by 
a so-called varietal free topological group) was also proposed in 1951 by Higman~\cite{Higman}; 
however, it is Morris' definition which has proved viable and developed into a rich theory.} 
and a full variety of topological groups and studied free objects of these
varieties \cite{Morris1, Morris2, Morris3} (see also~\cite{Morris-survey}). 
Varieties of topological groups and their free objects were also considered by
Porst~\cite{Porst}, Comfort and van~Mill~\cite{Comfort-van_Mill},
Kopperman, Mislove, Morris,
Nickolas, Pestov, and Svetlichny~\cite{Varieties-Co}, and other authors.  
Special mention should be made of Dikranjan and Tkachenko's 
detailed study of varieties of Abelian topological groups with properties related 
to compactness~\cite{Dikr-var}.

The varieties of topological groups in which free objects have been studied best are, 
naturally, the varieties of general and Abelian topological groups; free and free Abelian 
precompact groups have also been considered (see, e.g.,~\cite{book_Arh-Tkach}).
However, there is yet another natural variety---Boolean topological groups. Free objects in this variety and its subvarieties 
have been investigated much less extensively, although they arise fairly often in various studies 
(especially in the set-theoretic context). The author is aware of only two published papers 
considering free Boolean topological groups from a general point of view: \cite{Genze}, 
where the topology of the free Boolean topological group on a compact metric space was explicitly 
described, and~\cite{Genze-et-al}, where the free Boolean topological groups on compact initial 
segments of ordinals were classified (see also \cite{Genze-et-al2}).   
The purpose of this paper is to draw attention 
to these very interesting groups and give a general impression of them. We collect some 
(known and new) results on free Boolean topological groups, which describe both properties which 
these groups share with free or free Abelian topological groups 
and properties specific of free Boolean groups.  

\section{Preliminaries}
\label{Preliminaries}

All topological  spaces and groups  considered in this paper are assumed 
to be completely regular and Hausdorff. 

The notation~$\omega$ is used for the set of all nonnegative integers 
and $\mathbb N$, for the set of all positive integers. 
By $\mathbb Z_2$ we denote the group of order~2. 
The cardinality of a set $A$ is denoted by $|A|$, and the closure of a set $A$ in an 
ambient topological space is denoted by $\overline A$ and its interior, by $\Int A$. 
For any set $A$, we put 
$$
\begin{gathered}
[A]^k=\{S\subset A: |S| = k\} \quad \text{for}\ k\in \mathbb N, \\
[A]^{<\omega}=\bigcup_{k\in \mathbb N} [A]^k=\{S\subset A: |S|<\aleph_0\}, 
\quad\text{and}\quad [A]^\omega=\{S\subset A: |S|=\aleph_0\}.
\end{gathered}
$$ 
Given $s, t \in [\omega]^{<\omega}$, $s \sqsubset t$ means 
that $s$ is an initial
segment of $t$ with respect to the order induced by $\omega$. 
For $g\in [\omega]^{<\omega}\setminus \{\emptyset\}$ by $\max g$ 
we mean the greatest element of the finite set $g$ in the ordering of $\omega$. We also 
set $\max \emptyset = -1$. 

We denote the disjoint union of 
spaces $X$ and $Y$ by $X\oplus Y$. The same symbol $\oplus$ is used for direct sums of 
groups (hopefully, this will cause no confusion).

A \emph{seminorm} $\|\cdot\|$ on a group (or $\mathbb Z_2$-vector space) $G$ 
with identity element $e$ is a function $G\to \mathbb R$ such that $\|e\|=0$,  $\|g\|\ge 0$ and 
$\|g^{-1}\|=\|g\|$ for any $g\in G$, and $\|gh\|\le \|g\|+\|h\|$ for any $g, h\in G$. A seminorm 
satisfying the condition $\|g\|=0\iff g=e$ is called a \emph{norm}. 

The main object of study in this paper is Boolean topological groups. A \emph{Boolean group} is a 
group in which all elements are of order~2. Any Boolean group is Abelian: 
$xy=(yx)^2xy=yxyx^2y=yxy^2=yx$. Algebraically, all Boolean groups are free, because any Boolean 
group is a linear space over the field $\mathbb F_2=\{0,1\}$ and must have a basis (a maximal 
linearly independent set) by Zorn's lemma. This basis freely generates the given Boolean group. 
Moreover, any Boolean group (linear space) with basis $X$ is isomorphic to the direct sum 
$\bigoplus^{|X|}\mathbb Z_2$ of $|X|$ copies of $\mathbb Z_2$, i.e., the set of finitely supported 
maps $g\colon X \to \mathbb Z_2$ with pointwise addition (in the field $\mathbb F_2$). Of course, 
such an isomorphic representation depends on the choice of the basis.

Given a set or space $X$, by $F(X)$, $A(X)$, and $B(X)$ we denote, respectively, the free, free 
Abelian, and free Boolean group on $X$ with or without a topology (depending on the context). 

Topological spaces  $X$ and $Y$ are said to be \emph{$M$-equivalent} (\emph{$A$-equivalent}) if 
their free (free Abelian) topological groups are topologically isomorphic. We shall say that 
$X$ and $Y$ are \emph{$B$-equivalent} if $B(X)$ and $B(Y)$ are 
topologically isomorphic. 

Given $X\supset Y$,  we use $B(Y|X)$ to denote the topological 
subgroup  of $B(X)$ generated by $Y$. 

Whenever $X$ algebraically generates a group $G$, we set the length of the 
identity element to 0, define the length of any nonidentity $g \in G$ with 
respect to $X$ as the least (positive) integer $n$ such that $g = x_1^{\varepsilon 
_1}x_2^{\varepsilon _2}\dots x_n^{\varepsilon _n}$ for some $x_i\in X$ and $\varepsilon_i = \pm 1$, 
$i= 1, 2, \dots, n$, and denote the set of elements of length at most $k$ by $G_k$ for $k\in 
\omega$; then $G=\bigcup G_k$. Thus, we use $F_k(X)$ ($A_k(X)$, $B_k(X)$) to denote the sets of 
words of length at most $k$ in $F(X)$ (respectively, in $A(X)$ and $B(X)$). 

Let $X$ be a space, and let $X_n$, $n\in \omega$, be its subspaces 
such that $X=\bigcup X_n$. Suppose that 
any $Y\subset X$ is open in $X$ if and only if each $Y\cup X_n$ is open in $X_n$ 
(replacing  ``open'' by ``closed,'' we  obtain an equivalent condition). Then 
$X$ is said to have the \emph{inductive limit topology} 
(with respect to the decomposition $X=\bigcup X_n$). When talking about inductive limit topologies 
on $F(X)$, $A(X)$, and $B(X)$, we always mean the decompositions $F(X) = \bigcup F_k(X)$, 
$A(X) = \bigcup A_k(X)$, and $B(X) = \bigcup B_k(X)$ and always assume the sets $F_k(X)$, 
$A_k(X)$, and $B_k(X)$ to be endowed with the topology induced by the respective 
free topological groups. 

By a zero-dimensional space we mean a space $X$ with $\operatorname{ind} X=0$ and by a 
strongly zero-dimensional space, a space $X$ with $\operatorname{dim} X=0$.

\subsection*{Filters and ultrafilters}
A special place in the theory of Boolean topological groups is occupied by 
free Boolean groups on almost discrete spaces, which are closely related to filters. Recall that a 
\emph{filter} on a set $X$ is a nonempty family of susets of $X$ closed under taking finite 
intersections and supersets. A maximal (by inclusion) filter is called an \emph{ultrafilter}. A 
filter on $X$ is an ultrafilter if, given any $A\subset X$, it contains either $A$ or $X\setminus 
A$. We largely deal with filters on $\omega$. We assume all filters $\mathscr F$ on $\omega$ to be 
free, i.e., to contain the \emph{Fr\'echet} filter of all cofinite sets. 


An important role in our study is played by Ramsey, or selective, ultrafilters.

The notion of a Ramsey ultrafilter is closely related to Ramsey's theorem, which says that if 
$n \in \mathbb N$ and the set $[\omega]^n$  of $n$-element subsets of $\omega$ is partitioned 
into finitely many pieces, then there is an infinite set $H \subset \omega$ homogeneous 
with respect to this partition, i.e., such that $[H]^n$ is contained in one of 
the pieces~\cite{Ramsey}. An ultrafilter $\mathscr U$ on $\omega$ is called
 a \emph{Ramsey ultrafilter} if, given any positive integers $n$ and $k$, 
every partition $F \colon  [\omega]^n \to \{1, \dots, k\}$ has a homogeneous set 
$H \in \mathscr U$. In what follows, we use the following well-known characterizations of Ramsey 
ultrafilters.

\begin{theorem}[see \cite{Booth}]
\label{ramseychar}  
The following conditions on a free ultrafilter $\mathscr U$ on $\omega$ are equivalent:
\begin{enumerate}
\item[(i)]
$\mathscr U$ is Ramsey;
\item[(ii)]
for any partition $\{C_n: n\in \omega\}$ of $\omega$ such that $C_n\notin \mathscr U$ for 
$n\in \omega$, there exists a \emph{selector} in  $\mathscr U$, that is, a set $A\in \mathscr 
{U}$ such that $|A\cap C_n|=1$ for all~$n$; 
\item[(iii)]
for any sequence $\{A_n: n\in \omega\}$, where $A_n\in \mathscr U$, there exists an 
$A\in \mathscr U$ such  that $A=\{a_n: n\in \omega\}$ and $a_n\in A_n$ for all~$n$; 
\item[(iv)]
for any family $\{A_n: n \in\omega\}$, where $A_n\in \mathscr U$, there exists a 
\emph{diagonal intersection} in $\mathscr U$, that is, a set $D\in \mathscr U$ such that $j \in  
A_i$ whenever $i, j \in D$ and $i < j$; 
\item[(v)] for any $A_n\in \mathscr U$, $n \in\omega$, 
there exists a strictly increasing function $f\colon \omega\to\omega$ such that $f(n+1)\in 
A_{f(n)}$ for each $n\in \omega$ and the range of $f$ belongs to~$\mathscr U$. 
\end{enumerate} 
\end{theorem}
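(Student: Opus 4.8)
The plan is to prove the cycle of equivalences by establishing a ring of implications, say $(i)\Rightarrow(iii)\Rightarrow(iv)\Rightarrow(v)\Rightarrow(ii)\Rightarrow(i)$, using the standard toolkit for selective ultrafilters. For $(i)\Rightarrow(iii)$, given $\{A_n:n\in\omega\}\subset\mathscr U$, I would first pass to a decreasing sequence $A_0'\supset A_1'\supset\cdots$ in $\mathscr U$ with $A_n'\subset A_n$, and then define a colouring $F\colon[\omega]^2\to\{0,1\}$ by colouring a pair $\{i,j\}$ with $i<j$ according to whether $j\in A_i'$; a homogeneous set $H\in\mathscr U$ must be homogeneous in the colour $0$ (since the complement colour is eventually impossible on a set meeting every $A_i'$), and enumerating $H=\{a_n:n\in\omega\}$ in increasing order gives $a_n\in A_n'\subset A_n$ after discarding the first point or reindexing. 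The implication $(iii)\Rightarrow(iv)$ is similar in spirit but uses the stronger pseudo-intersection-type conclusion: from a decreasing refinement one builds the diagonal intersection directly, or one applies $(iii)$ to a cleverly chosen sequence so that the resulting enumerated set automatically satisfies $j\in A_i$ for $i<j$ in it. For $(iv)\Rightarrow(v)$, given a diagonal intersection $D=\{d_0<d_1<\cdots\}\in\mathscr U$ for the family, one checks that the increasing enumeration function $f(n)=d_n$ (or a suitable subsequence) satisfies $f(n+1)\in A_{f(n)}$ because $d_{n+1}\in A_{d_n}$ by the defining property of $D$; the range of $f$ is $D\in\mathscr U$.

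For $(v)\Rightarrow(ii)$, suppose $\{C_n:n\in\omega\}$ partitions $\omega$ with every $C_n\notin\mathscr U$. For each $n$ set $A_n=\omega\setminus(C_0\cup\cdots\cup C_{k(n)})$ where $k(n)$ is chosen so that the element $n$ lies in $C_{k(n)}$; since each $C_j\notin\mathscr U$ and $\mathscr U$ is an ultrafilter, each finite union $C_0\cup\cdots\cup C_j$ is not in $\mathscr U$, so $A_n\in\mathscr U$. Applying $(v)$ yields a strictly increasing $f$ with range in $\mathscr U$ and $f(n+1)\in A_{f(n)}$; the point of the construction is that consecutive values $f(n),f(n+1)$ then lie in distinct pieces $C_j$, and by refining the range of $f$ to pick at most (hence exactly, after intersecting suitably) one element from each $C_n$ one obtains the desired selector in $\mathscr U$. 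Finally, $(ii)\Rightarrow(i)$ is the classical argument: given $F\colon[\omega]^n\to\{1,\dots,k\}$, one first uses Ramsey's theorem to get, inside any set of $\mathscr U$, an infinite homogeneous set, and then one must upgrade ``some infinite homogeneous set'' to ``a homogeneous set in $\mathscr U$''; for $n=1$ this is immediate from the ultrafilter property, and for $n\geq 2$ one induces on $n$, at each stage using the selector/partition property $(ii)$ to absorb the homogeneous sets produced for the ``sections'' $F(\{m\}\cup\cdot)$ into a single member of $\mathscr U$.

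The main obstacle I expect is the bookkeeping in the implications that produce a set in $\mathscr U$ with a prescribed internal combinatorial structure (namely $(iii)$, $(iv)$, and the step $(ii)\Rightarrow(i)$ for $n\geq 2$): one has to be careful that passing to decreasing refinements, enumerating in increasing order, and discarding finitely many initial points all interact correctly with membership in $\mathscr U$ (which is preserved under removing finite sets and under the operations used, but the indexing shifts must be tracked). A secondary subtlety is the direction of the inequality $i<j$ in the ``diagonal intersection'' condition of $(iv)$ versus the way the homogeneous colour is chosen in $(i)\Rightarrow(iii)$ — getting the colour $0$ to be the ``good'' one rather than the empty one requires noting that a member of $\mathscr U$ cannot be homogeneous in the colour ``$j\notin A_i$'' because that colour, restricted to $\{i\}\times(\text{a tail})$, is eventually empty once $i$ is fixed. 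Since all of this is standard, I would present it compactly, citing \cite{Booth} for the overall equivalence and spelling out only the two or three steps where the construction is genuinely informative.
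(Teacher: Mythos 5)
The paper does not actually prove this theorem: it is quoted from Booth, and the only in-text guidance is the later remark that (iv)$\iff$(v) and (iv)$\implies$(iii) are easy, that (iii)$\implies$(v) is the substantive argument in Booth, and that (v)$\implies$(i) is the ultrafilter-using step from Jech. So I can only judge your sketch on its own terms. The outer architecture is fine, and three of your five links are genuinely correct as sketched: (i)$\Rightarrow$(iii) (the colour ``$j\notin A_i'$'' cannot be the homogeneous one on a set of $\mathscr U$, exactly as you say), (iv)$\Rightarrow$(v) (the increasing enumeration of a diagonal intersection works verbatim), and (v)$\Rightarrow$(ii) (though you should \emph{enlarge} the range of $f$ by one point from each missed piece, not ``intersect''; at-most-one-per-piece upgrades to exactly-one by adding points, which preserves membership in $\mathscr U$).

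The two remaining links are precisely where the content of the theorem lives, and both are left as declarations of intent. For (iii)$\Rightarrow$(iv): applying (iii) to the decreasing refinement $A_n'=\bigcap_{k\le n}A_k$ yields $a_n\in A_n'$, i.e., membership governed by the \emph{position} $n$ of $a_n$ in the enumeration, whereas a diagonal intersection requires $a_j\in A_{a_i}$, i.e., membership governed by the \emph{values}; since (iii) does not promise an increasing enumeration and $a_i$ may far exceed the index of a later point, the set $\{a_n\}$ produced this way is in general \emph{not} a diagonal intersection. Your ``cleverly chosen sequence'' is exactly the missing construction --- this is the content of Booth's (iii)$\Rightarrow$(v), not a routine refinement. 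For (ii)$\Rightarrow$(i) with $n\ge 2$: after choosing, for each $m$, a set $B_m\in\mathscr U$ on which the section $F(\{m\}\cup\cdot)$ is monochromatic and stabilizing the colour on a set in $\mathscr U$, the step that ``absorbs the sections into a single member of $\mathscr U$'' is precisely the existence of a diagonal intersection of the $B_m$, i.e., property (iv); a selector for the partition $C_m=B_m\setminus B_{m+1}$ gives only a pseudointersection ($P$-point behaviour), not a diagonal intersection, without a further thinning argument. As written, the closing link of your cycle silently invokes an implication ((ii)$\Rightarrow$(iv)) that is not available at that point in the ring. Any correct proof must contain one genuine crossing from the ``weak'' side $\{$(ii),(iii)$\}$ to the ``strong'' side $\{$(i),(iv),(v)$\}$ plus a genuine proof of (iv) or (v) $\Rightarrow$ (i); your sketch schedules both crossings but fills in neither.
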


Ultrafilters with property (ii) are said to be \emph{selective}; thus, the Ramsey 
ultrafilters on $\omega$ are precisely the selective ultrafilters, and the terms ``Ramsey'' and 
``selective'' are often used interchangeably in the literature. Any Ramsey ultrafilter is 
$P$-point, but not vice versa~\cite{Booth}. 

We also mention $P$-point ultrafilters and $P$-filters. 

An ultrafilter $\mathscr U$ on $\omega$ is a \emph{$P$-point ultrafilter}, or simply a 
\emph{$P$-ultrafilter}, if, for any family of $A_i\in \mathscr U$, $i \in \omega$, 
the ultrafilter $\mathscr U$ contains 
a pseudointersection of this family, i.e., there exists an $A\in \mathscr U$  
such that $|A\setminus A_i| < \omega$ for all $i\in \omega$. The $P$-point ultrafilters 
are precisely those which are $P$-points in the remainder $\beta\omega\setminus \omega$ of the 
Stone--\v Chech compactification of the discrete space $\omega$ \cite{Booth}. By analogy, a filter 
$\mathscr F$ on $\omega$ is called a \emph{$P$-filter} if any family of $A_i\in \mathscr F$, $i \in 
\omega$, has a pseudointersection in $\mathscr F$. There are models of ZFC with 
no $P$-point ultrafilters (see \cite{Shelah}; Shelah's original proof is 
presented in~\cite{Wimmers(Shelah)}), while $P$-filters always exist: the simplest example is the 
Fr\'echet filter.  

By analogy with $P$-filters, we might define Ramsey filters as filters satisfying condition~(i) in 
Theorem~\ref{ramseychar}, but this would not yield new objects: it is easy to see that any such 
filter is an ultrafilter. The situation with selective filters is not so obvious. First, 
conditions~(ii) and~(iii), which are trivially equivalent for ultrafilters, become potentially  
different.\footnote{Moreover, interpreting $A\in \mathscr {U}$ as $\omega\setminus A\notin 
\mathscr {U}$ (this is the same thing for ultrafilters) in condition~(ii), we obtain the definition 
of $+$-selective filters~\cite{Chodounsky}, which are not necessarily ultrafilters.} Secondly, 
although the proof of the implication $\text{(iii)}\implies \text(v)$ given in~\cite{Booth} (as 
well as the trivial equivalence $\text{(iv)}\iff \text(v)$ and the obvious implication 
$\text{(iv)}\iff \text{(iii)}$) remains valid for 
filters, the standard proof of $\text{(v)}\implies \text(i)$ (see, e.g., 
\cite[Lemma~9.2]{Jech}) uses $\mathscr U$ being an ultrafilter. The author found 
several mentions (without proof) in the literature that any selective filter is an ultrafilter, but it was 
never clear from the context what exactly was meant by ``selective.'' Anyway, results 
of Section~\ref{secforcing} on free Boolean topological groups imply that any filter satisfying 
any of the equivalent conditions~(iii)--(v) is a Ramsey ultrafilter. 

Finally, a filter $\mathscr F$ on $\omega$ is said to be \emph{rapid} if every function 
$\omega\to\omega$ is majorized by the increasing enumeration of some element of $\mathscr F$. 
Clearly, any filter containing a rapid filter is rapid as well; thus, the existence of rapid 
filters is equivalent to that of rapid ultrafilters. Rapid ultrafilters are also known as 
semi-$Q$-point, or weak $Q$-point, ultrafilters. In~\cite{Miller} Miller proved that 
the nonexistence of rapid (ultra)filters is consistent with ZFC (as well that of 
$P$-point ultrafilters, as mentioned above). However, it is still unknown whether 
the nonexistence of both rapid and $P$-point ultrafilters is consistent with~ZFC. 

\section{Varieties of Topological Groups and Free Topological Groups}

A \emph{variety of topological groups} is a class of topological groups closed with
respect to taking topological subgroups, topological
quotient groups, and Cartesian products of groups with the Tychonoff product topology. 
Thus, the abstract groups $\widetilde G$ underlying the topological groups $G$ 
in a variety $V$ of topological groups (that is, all groups $G\in V$ without topology)
form a usual variety $\widetilde V$ of groups.
A variety $V$ of topological groups is \emph{full} if any topological group $G$ for which 
$\widetilde G\in \widetilde V$ belongs to $V$. The notions of a variety and a full variety 
of topological groups were introduced by Morris in~\cite{Morris1, Morris2}, who also proved 
the existence of the free group of any full variety on any completely 
regular Hausdorff space $X$.

Free objects of varieties of topological groups are
characterized by the corresponding universality properties (we give a somewhat specific meaning 
to the word ``universality,'' but we use this word only in this meaning here). Thus, the 
\emph{free topological group} $F(X)$ on a space $X$ admits the following description: 
$X$~is topologically
embedded in $F(X)$ and, for any continuous map $f$ of~$X$
to a~topological group~$G$, there exists a~unique continuous
homomorphism $\hat f\colon F(X)\to G$ for which $f= \hat f\restriction X$. As an
abstract group, $F(X)$ is the free group on the set~$X$.
The topology of $F(X)$ can be defined as the strongest group
topology inducing the initial topology on~$X$. On the other hand,
the free topological group $F(X)$ is the abstract free
group generated by the set~$X$ (which means that any map 
of the set~$X$ to any abstract group can be extended to
a homomorphism of $F(X)$) endowed with the weakest topology
with respect to which all homomorphic extensions
of continuous maps from~$X$ to topological groups are continuous. 
The \emph{free Abelian topological group} $A(X)$ on $X$, 
the \emph{free Boolean topological group} $B(X)$ on $X$, and free (free Abelian, free Boolean) 
\emph{precompact} groups are defined similarly; instead of continuous maps to
any topological groups, continuous
maps to topological Abelian groups, topological Boolean groups, and 
precompact (Abelian precompact, Boolean precompact) groups should be considered.

For any space $X$, the free Abelian topological group $A(X)$ is the quotient topological group 
of $F(X)$ by the commutator subgroup, and the free Boolean topological group 
$B(X)$ is the quotient of $A(X)$ by the subgroup of squares $A(2X)$ (which is 
generated by all words of the form $2x$, $x \in X$). (The universality of free objects 
in varieties of topological groups implies that the corresponding homomorphisms 
are continuous and open.)  
Thus, $B(X)$ is the image of $A(X)$ (and of $F(X)$) under a continuous open homomorphism. 

\subsection*{Linear topological groups}
There is yet another family of varieties of topological groups, which are not full 
but still interesting and useful. Following Malykhin (see also~\cite{book_Arh-Tkach}), 
we say that a topological group is \emph{linear} if it has a base of neighborhoods of the identity 
element which consists of open subgroups. The classes of all linear groups, all Abelian linear 
groups, and all Boolean linear groups are varieties of topological groups. As mentioned, 
these varieties are not 
full, but  for any zero-dimensional space $X$, there exist free groups 
of all of these three varieties on $X$. Indeed, a free group of a variety of topological 
groups on a given space exists if this space can be embedded as a subspace 
in a group from this variety~\cite[Theorem~2.6]{Morris1}. The following lemma ensures the 
existence of the required embeddings for the three varieties under consideration (although it 
would suffice to embed any zero-dimensional $X$ in a Boolean linear topological group, which 
belongs to eash of these varieties).

\begin{lemma} 
\label{lemma}
\begin{enumerate}
\item[(i)] For any space $X$ with $\operatorname{ind} X = 0$, there exists a 
Hausdorff linear topological group $F'(X)$ such that $F'(X)$ is the 
algebraically free group on $X$, $X$ is a closed subspace of $F'(X)$, and 
all sets $F_n(X)$ of words of length at most $n$ are closed in $F'(X)$.

\item[(ii)] For any space $X$ with $\operatorname{ind} X = 0$, there exists a 
Hausdorff Abelian linear topological group $A'(X)$ such that $A'(X)$ is the 
algebraically free Abelian group on $X$, $X$ is a closed subspace of $A'(X)$, and 
all sets $A_n(X)$ of words of length at most $n$ are closed in $A'(X)$.

\item[(iii)] For any  space $X$ with $\operatorname{ind} X = 0$, there exists a 
Hausdorff Boolean linear topological group $B'(X)$ such that $B'(X)$ is the 
algebraically free Boolean group on $X$, $X$ is a closed subspace of $B'(X)$, and 
all sets $B_n(X)$ of words of length at most $n$ are closed in $B'(X)$.
\end{enumerate}
\end{lemma}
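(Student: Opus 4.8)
The plan is to put on the abstract free group $F(X)$ — and, in parallel, on the abstract free Abelian group $A(X)$ and the abstract free Boolean group $B(X)$ — the weakest group topology making the homomorphic extensions of a suitable family of continuous maps into discrete groups continuous, and then to verify the required properties by hand. The family is indexed by the \emph{finite clopen partitions} of $X$, which are plentiful because $\operatorname{ind}X=0$: $X$ has a base of clopen sets, so every open set is a union of clopen sets, and, using in addition that $X$ is Hausdorff, any finite set of distinct points of $X$ can be separated into pairwise distinct blocks of a single finite clopen partition. Since only finite partitions of $X$ are involved, the index family is a set.

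For a finite clopen partition $\gamma=\{U_i:i\in I_\gamma\}$ of $X$, let $G_\gamma$ be the free group on the finite set $I_\gamma$ in case~(i), the free Abelian group $\bigoplus_{I_\gamma}\mathbb Z$ in case~(ii), and the free Boolean group $\bigoplus_{I_\gamma}\mathbb Z_2$ in case~(iii), each taken with the discrete topology, and let $\phi_\gamma\colon X\to G_\gamma$ send every point of $U_i$ to the generator of $G_\gamma$ corresponding to $i$. Then $\phi_\gamma$ is continuous because the blocks $U_i$ are clopen and $G_\gamma$ is discrete; extend it to a homomorphism $\hat\phi_\gamma$ of the relevant abstract free group onto $G_\gamma$, and note that $\ker\hat\phi_\gamma$ is a normal subgroup. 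Declaring all finite intersections $\bigcap_{i=1}^{m}\ker\hat\phi_{\gamma_i}$ to be a base of neighborhoods of the identity defines a group topology (the members are normal subgroups) which is linear (they are subgroups); the resulting group is Abelian in case~(ii) and Boolean in case~(iii) because its underlying abstract group is, and a Hausdorff topological group is automatically completely regular. This choice of family is the only real design decision; the rest is verification.

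Three things then have to be checked for, say, $F'(X)$ (the other two cases being identical). \emph{$X$ is a subspace.} A basic neighborhood of $x_0\in X$ meets $X$ exactly in $\bigcap_{i=1}^{m}\phi_{\gamma_i}^{-1}\bigl(\phi_{\gamma_i}(x_0)\bigr)$, a finite intersection of clopen neighborhoods of $x_0$ in $X$; conversely, for any clopen $V\ni x_0$ the partition $\gamma=\{V,X\setminus V\}$ gives $\phi_\gamma^{-1}\bigl(\phi_\gamma(x_0)\bigr)=V$. Hence the topology $F'(X)$ induces on $X$ has the clopen subsets of $X$ as a base, i.e.\ equals the original topology. \emph{Hausdorffness and closedness of $X$ and of the $F_k(X)$ (respectively $A_k(X)$, $B_k(X)$).} The key point is: if $w$ is a word of reduced length $n$ and $\gamma$ is a finite clopen partition placing the distinct letters of $w$ in distinct blocks, then the relabeling of letters induced by $\gamma$ is injective on the letters of $w$, so $\hat\phi_\gamma(w)$ is again a reduced word of length $n$ in $G_\gamma$, and it is a single generator of $G_\gamma$ only when $w$ itself was a single generator of $F(X)$. (In cases~(ii) and~(iii) one reads ``reduced length'' as $\sum_i|n_i|$ and as the cardinality of the support, and ``reduced word'' accordingly.) From this: taking $w\ne e$ gives $\hat\phi_\gamma(w)\ne e$, so $\bigcap_\gamma\ker\hat\phi_\gamma=\{e\}$, the topology is $T_1$, hence Hausdorff; taking $w\notin F_k(X)$ gives $\hat\phi_\gamma(w)$ of length $n>k$, whereas $\hat\phi_\gamma$ maps $F_k(X)$ into $F_k(G_\gamma)$, so $\hat\phi_\gamma^{-1}\bigl(\hat\phi_\gamma(w)\bigr)$ is an open neighborhood of $w$ missing $F_k(X)$; and taking $w\notin X$ (the case $w=e$ being handled by the one-block partition) gives $\hat\phi_\gamma(w)$ not a generator of $G_\gamma$, while $\phi_\gamma(X)$ consists of generators, so $\hat\phi_\gamma^{-1}\bigl(\hat\phi_\gamma(w)\bigr)$ is an open neighborhood of $w$ missing $X$. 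Since $F'(X)$ is by construction the abstract free group on $X$, this completes the proof of~(i), and (ii), (iii) are proved word for word the same way.

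The step I expect to be the crux is the key observation above: that the finitely many letters of a given word can be fitted into distinct blocks of one finite clopen partition, and that this pins down both the reduced length of the image word and whether it is a single generator. This is precisely where $\operatorname{ind}X=0$ (for the existence of enough clopen sets) and the Hausdorffness of $X$ (to separate the letters) enter. The remaining ingredients — that the prescribed neighborhoods generate a linear, Boolean/Abelian group topology, that each $\hat\phi_\gamma$ is continuous, that $\hat\phi_\gamma$ does not increase word length — are routine.
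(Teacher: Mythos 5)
Your proof is correct, and the topology you construct is in fact the same one the paper uses: for a finite clopen partition $\gamma$, the kernel $\ker\hat\phi_\gamma$ is exactly the paper's subgroup $H(\gamma)$ of words whose signed letter count vanishes on each block, and the paper itself remarks that finite disjoint covers suffice. Where you differ is in how the verification is organized. The paper proves closedness of $A_n(X)$ and of $X$ by a direct cancellation computation inside the free Abelian group: it shows that adding a reduced element of $H(\gamma)$ to a word $g$ whose letters are separated by $\gamma$ cannot shorten $g$, because a full cancellation of a pair $y_i-z_i$ would force two letters of $g$ to be equal with opposite signs. You instead push everything forward along $\hat\phi_\gamma$ to the discrete free group on the finite index set and argue about reduced lengths there; the crux in both arguments is the same observation that a partition separating the letters of a word makes the induced relabeling injective on those letters, hence length-preserving. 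Your packaging buys two things: the three cases (i)--(iii) are handled uniformly by one argument (whereas the paper only cites an external reference for the non-Abelian case (i) and sketches (iii) as ``similar''), and the closedness proofs reduce to the trivial fact that a homomorphism sending letters to letters cannot increase reduced length, which in the non-Abelian case is less error-prone than tracking cancellations directly. The one point worth stating explicitly (you gesture at it correctly) is that in case (i) an image word $\sigma(x_1)^{\varepsilon_1}\cdots\sigma(x_n)^{\varepsilon_n}$ is reduced precisely because $\sigma$ is injective on the letters, so adjacent cancellation in the image would force adjacent cancellation in $w$.
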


\begin{proof} 
Assertion (i) was proved in~\cite[Theorem~10.5]{VINITI}. Let us prove (ii). 
Given a disjoint open cover $\gamma$ of $X$, 
consider the subgroup 
\begin{multline*}
H(\gamma)=\Bigl\{\sum_{i=1}^n (x_i-y_i)\colon\\ 
n\in \mathbb N\ 
\text{and for each $i\le n$, there exists an $U_i\in \gamma$ for which $x_i, y_i\in U_i$}\Bigr\}; 
\end{multline*}
clearly, we can assume that all words in $H(\gamma)$ are reduced (if $x_i$ is canceled with $y_j$, 
then $U_i=U_j$, because $U_i\cap U_j\ni x_i=y_j$ and $\gamma$ is disjoint, and we can 
replace $x_i-y_i + x_j-y_j$ by $x_j-y_i$). 
All such subgroups generate a group topology on the free Abelian group on $X$; we denote 
the free Abelian group with this topology by $A'(X)$. (We might as well take only finite 
covers.) 

The space $X$ is indeed embedded in 
$A'(X)$: given any clopen neighborhood $U$ of any point $x\in X$, we have 
$x+H(\{U, X\setminus U\})\cap X = U$. 

Let us show that $A_n(X)$ is closed in $A'(X)$ for any $n$. Take any reduced word 
$g=\varepsilon_1 x_1 + \varepsilon_2 x_2 +\dots + \varepsilon_k x_k$ with $k > n$, where 
$\varepsilon_i=\pm 1$ and $x_i\in X$ for $i\le k$. Let 
 $U_i$ be clopen neighborhoods of $x_i$ such that $U_i$ and $U_j$ are disjoint 
if $x_j\ne x_i$ and coincide if $x_j=x_j$. We set 
$$
\gamma=\Bigl\{U_1, \dots, U_k, X\setminus \bigcup_{i\le k} U_i\Bigr\}.
$$ 
Take any 
reduced word $h = \sum_{i=1}^m (y_i-z_i)$ in $H(\gamma)$ and consider $g+h$. 
If, for some $i\le m$, both $y_i$ and $-z_i$ 
are canceled in $g+h$ with some $x_j$ and $x_l$, then, first, $x_j = x_l$ (because 
any different letters in $g$ are separated by the cover $\gamma$, while 
$y_i$ and $z_i$ must belong to the same element of this cover), and secondly, $\varepsilon_j 
= -\varepsilon _l$ (because $y_i$ and $z_i$ occur in $h$ with opposite signs). Hence 
$\varepsilon_j x_j = -\varepsilon_l x_l$, which contradicts $g$ being reduced. Thus, among any two letters 
$y_i$ and $-z_i$ in $h$ only one can be canceled in $g+h$, so that $g+h$ cannot be shorter 
than $g$. In other words, $g+H(\gamma)\cap A'_n(X) =\varnothing$. 

The proof that $X$ is closed in $A'(X)$ is similar: given any 
$g\notin X$, we construct precisely the same $\gamma$ as above (if $g\notin -X$) or set 
$\gamma = \{X\}$ (if $g\in -X$) and show that 
$g+H(\gamma)$ must contain at least one negative letter.

The Hausdorffness of $A'(X)$ is equivalent to the closedness of $A_0(X)$. 

The proof of assertion~(iii) is similar.
\end{proof}

Lemma~\ref{lemma}  immediately implies the following theorem.

\begin{theorem}
\label{free-linear}
For any  space $X$ with $\operatorname{ind} X = 0$, the free, free Abelian, and free 
linear topological groups $F^{\mathrm{lin}}(X)$, $A^{\mathrm{lin}}(X)$, and 
$B^{\mathrm{lin}}(X)$ are defined. They are Hausdorff and contain $X$ as a closed subspace, 
and all sets $F_n(X)$, $A_n(X)$, and $B_n(X)$ are closed in the respective groups. 
\end{theorem}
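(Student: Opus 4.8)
The plan is to deduce the theorem directly from Lemma~\ref{lemma} together with Morris' embedding criterion quoted above, namely that a free group of a variety of topological groups on a space $X$ exists whenever $X$ embeds as a subspace in some group belonging to that variety \cite[Theorem~2.6]{Morris1}. First I would observe that the three groups $F'(X)$, $A'(X)$, and $B'(X)$ produced by Lemma~\ref{lemma} are, respectively, a Hausdorff linear group, a Hausdorff Abelian linear group, and a Hausdorff Boolean linear group, each containing a homeomorphic copy of $X$. Since the classes of linear, Abelian linear, and Boolean linear topological groups are (as noted in the text) varieties of topological groups, Morris' criterion applies in each case and yields the existence of the free objects $F^{\mathrm{lin}}(X)$, $A^{\mathrm{lin}}(X)$, and $B^{\mathrm{lin}}(X)$ of these three varieties on $X$.

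It then remains to transfer the closedness properties from the auxiliary groups of the lemma to the genuine free objects. For this I would use the universality property characterizing each free object: the identity map $X\to X\hookrightarrow F'(X)$ extends to a continuous homomorphism $F^{\mathrm{lin}}(X)\to F'(X)$, and likewise in the Abelian and Boolean cases. Because $F'(X)$ is the algebraically free group on $X$ and so is $F^{\mathrm{lin}}(X)$, this homomorphism is an algebraic isomorphism; being continuous, it is therefore a continuous algebraic isomorphism carrying $X$ identically onto $X$, and consequently carrying $F_n(X)$ onto $F_n(X)$ and $X$ onto $X$. Since $X$ and every $F_n(X)$ is closed in $F'(X)$ by Lemma~\ref{lemma}(i), and preimages of closed sets under a continuous map are closed, $X$ and every $F_n(X)$ are closed in $F^{\mathrm{lin}}(X)$ as well; in particular $F^{\mathrm{lin}}(X)$ is Hausdorff because $F_0(X)=\{e\}$ is closed, and $X$ sits in it as a closed subspace. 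The same argument, verbatim, applies to $A^{\mathrm{lin}}(X)$ via $A'(X)$ and to $B^{\mathrm{lin}}(X)$ via $B'(X)$.

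The main point requiring care — though it is routine rather than deep — is the verification that the canonical continuous homomorphism from the free object onto the auxiliary group is genuinely the algebraic identity on the common underlying free group, so that it restricts to the identity on $X$ and on each $G_n$; this is exactly where one uses that both groups are \emph{algebraically} free on $X$ with $X$ embedded the same way. No further obstacle is expected: the existence half is an immediate citation, and the closedness half is a one-line pullback argument once the identification of underlying groups is in place. Hence the theorem follows.
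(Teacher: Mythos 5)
Your proof is correct and takes essentially the same route the paper intends: the paper derives the theorem from Lemma~\ref{lemma} via Morris' embedding criterion with no further comment, and your expansion — existence from the embedding of $X$ into the auxiliary group of the lemma, then pulling back closedness of $X$ and of the sets of words of bounded length along the canonical continuous algebraic isomorphism onto that auxiliary group — is exactly the argument being left implicit.
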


By definition, the free linear groups of a zero-dimensional space $X$ have the strongest 
linear group topologies inducing the topology of $X$, that is,  
any continuous map from 
$X$ to a linear topological group (Abelian linear topological group, Boolean linear 
topological group) extends to a continuous homomorphism from 
$F^{\mathrm{lin}}(X)$ ($A^{\mathrm{lin}}(X)$, $B^{\mathrm{lin}}(X)$) to this group.

\section{Descriptions of the Free Boolean Group Topology}
\label{secdescriptions}

The topology of free groups can be described explicitly; all descriptions of 
the topology of free and free Abelian topological groups of which the author is aware 
are given in~\cite{VINITI}. The descriptions of the free topological group topology 
are very cumbersome (except in a few special cases); the topology of free Abelian and 
Boolean topological groups looks much simpler. Thanks to the fact that $B(X) = A(X) / A(2X)$, the 
descriptions of the free Abelian topological group topology given in~\cite{VINITI} immediately 
imply the following descriptions of the free topology of $B(X)$. 

\textbf{I\enspace} For each $n\in \mathbb N$, we fix an arbitrary
entourage $W_n\in \mathscr U$ of the diagonal of $X\times X$ in the 
universal uniformity of $X$
and set 
\begin{gather*}
\widetilde W = \{W_n\}_{n\in \mathbb N},\\
U(W_n) = \{x+y\colon
(x, y) \in W_n\},\qquad 
\\[-6pt]
\shortintertext{and}
U(\widetilde W) =
\bigcup_{n\in \mathbb N} (U(W_1) + U(W_2) +\dots + U(W_n)).
\end{gather*}
The sets $U(\widetilde W)$, where $\widetilde W$ ranges over 
all sequences of uniform entourages of the diagonal,
form a neighborhood base at zero for the topology of the free Boolean
topological group $B(X)$.

\textbf{II\enspace} For each $n\in \mathbb N$, we fix an arbitrary
normal (or merely open) cover~$\gamma_n$
of the space~$X$ and set
\begin{gather*}
\Gamma = \{\gamma_n\}_{n\in \mathbb N},\\
U(\gamma_n) = \{x + y\colon
(x, y) \in U\in \gamma_n\},
\\[-6pt]
\shortintertext{and}
U(\Gamma) =
\bigcup_{n\in \mathbb N} (U(\gamma_1) + U(\gamma_2) +\dots + U(\gamma_n)).
\end{gather*}
The sets $U(\Gamma)$, where $\Gamma$ ranges over all sequences
of normal (or arbitrary open)
covers, form a~neighborhood base at zero for the topology
of~$B(X)$.

\textbf{III\enspace} For an arbitrary continuous pseudometric~$d$ on~$X$, we set
$$
U(d)=\Bigl\{x_1+y_1+x_2+y_2+\dots+x_n+y_n\colon
n\in\mathbb N,\ x_i, y_i\in X, \ \sum_{i=1}^{n} d(x_i,y_i)<1\Bigr\}.
$$
The sets $U(d)$, where $d$ ranges over all
continuous pseudometrics on~$X$,
form a neighborhood base at zero for the topology of~$B(X)$.

\subsection*{Topology of free linear groups}

It follows directly from the second description that the base of neighborhoods of zero in 
$B^{\mathrm{lin}}(X)$ (for zero-dimensional $X$) is formed by the subgroups 
$$
\langle U(\gamma)\rangle = \Bigl\{\sum_{i=1}^n (x_i + y_i)\colon n\in \mathbb N,\
(x_i, y_i) \in U_i\in \gamma \text{ for }i\le n\Bigr\}
$$ 
generated by 
the sets $U(\gamma)$ with $\gamma$ ranging over all normal covers of $X$. By definition, 
any normal cover of a strongly dimensional space has a disjoint open refinement. Therefore, 
for $X$ with $\dim X = 0$, the covers $\gamma$ can be assumed to be disjoint, 
and for disjoint $\gamma$, 
we have
\begin{multline*}
\langle 
U(\gamma)\rangle = \Bigl\{\sum_{i=1}^n (x_i + y_i)\colon \\
n\in \mathbb N,\
(x_i, y_i) \in U_i\in \gamma \text{ for }i\le n,\ \text{the word $\sum_{i=1}^n (x_i + y_i)$ 
is reduced}\Bigr\}
\end{multline*}
(see the proof of Lemma~\ref{lemma}). 
A similar description is valid for the Abelian groups $A^{\mathrm{lin}}(X)$ (the pluses 
must be replaced by minuses). This leads to 
the following statement. 

\begin{theorem}
\label{prop-linear}
For any strongly zero-dimensional space $X$
and any $n\in \omega$, the topology induced on  $A_n(X)$ (on $B_n(X)$) by 
$A^{\mathrm{lim}}(X)$ (by $B^{\mathrm{lin}}(X)$) 
coincides with that induced by $A(X)$ (by $B(X)$).
\end{theorem}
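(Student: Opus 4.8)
The plan is to establish the two inclusions between the subspace topologies separately. One of them comes for free: both $A^{\mathrm{lin}}(X)$ and $B^{\mathrm{lin}}(X)$ carry group topologies inducing the original topology of $X$, while $A(X)$ and $B(X)$ carry the \emph{strongest} such topologies, so the identity maps $A(X)\to A^{\mathrm{lin}}(X)$ and $B(X)\to B^{\mathrm{lin}}(X)$ are continuous; hence the topology induced on $A_n(X)$ (on $B_n(X)$) by $A^{\mathrm{lin}}(X)$ (by $B^{\mathrm{lin}}(X)$) is no finer than the one induced by $A(X)$ (by $B(X)$). It remains to prove the reverse inclusion, which I will do for the Boolean groups; the argument for $A(X)$ is word-for-word the same after replacing sums $x+y$ by $x-y$.

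Since we are dealing with topological groups and $B_n(X)$ carries the subspace topology, it suffices to fix $g\in B_n(X)$ together with a basic neighborhood of $g$ in $B(X)$, which by the second description of the free Boolean topology may be taken to be $U(\Gamma)+g$ for a sequence $\Gamma=\{\gamma_j\}_{j\in\mathbb N}$ of normal covers of $X$, and to produce a disjoint open cover $\gamma$ of $X$ for which the subgroup $\langle U(\gamma)\rangle$ (a basic neighborhood of $0$ in $B^{\mathrm{lin}}(X)$, by the description of the free linear topology) satisfies $(\langle U(\gamma)\rangle+g)\cap B_n(X)\subseteq U(\Gamma)+g$; equivalently, every $h\in\langle U(\gamma)\rangle$ with $h+g\in B_n(X)$ must lie in $U(\Gamma)$. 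Let $\gamma^{*}$ be a normal cover of $X$ refining each of $\gamma_1,\dots,\gamma_n$ (for instance their common refinement), and let $\gamma$ be a disjoint open refinement of $\gamma^{*}$; such a refinement exists because $X$ is strongly zero-dimensional. Then $U(\gamma)\subseteq U(\gamma_j)$ for every $j\le n$, and since $0\in U(\gamma_j)$ for each $j$, the sum of any $m\le n$ copies of $U(\gamma)$ is contained in $U(\gamma_1)+\dots+U(\gamma_n)\subseteq U(\Gamma)$.

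The crux is a length estimate. Let $h\in\langle U(\gamma)\rangle$ satisfy $h+g\in B_n(X)$. Because $\gamma$ is disjoint, the cancellation argument from the proof of Lemma~\ref{lemma} shows that $h$ can be written in reduced form $h=\sum_{i=1}^m(y_i+z_i)$, where $y_i$ and $z_i$ lie in a common member of $\gamma$ for each $i$ and the $2m$ letters $y_1,z_1,\dots,y_m,z_m$ are pairwise distinct; hence, viewing $B(X)$ as the $\mathbb F_2$-vector space of finitely supported maps $X\to\mathbb Z_2$, the word length of $h$ with respect to $X$ equals $2m$. Word length is subadditive, $|u+v|\le|u|+|v|$, so $2m=|h|=|(h+g)+g|\le|h+g|+|g|\le n+n$, giving $m\le n$. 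Consequently $h=\sum_{i=1}^m(y_i+z_i)$ belongs to the sum of $m\le n$ copies of $U(\gamma)$, hence to $U(\Gamma)$, which is exactly what was needed.

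Combining the two inclusions yields the coincidence of the two subspace topologies on $B_n(X)$; the Abelian case is obtained by the identical argument, with $g=\varepsilon_1a_1+\dots+\varepsilon_ka_k$ ($k\le n$) and $h=\sum_{i=1}^m(y_i-z_i)$ in reduced form, so that $|h|=2m$ and $m\le n$ follow as before from subadditivity of word length in $A(X)$. I expect the only delicate point to be the careful justification, via Lemma~\ref{lemma}, that an arbitrary element of $\langle U(\gamma)\rangle$ admits a reduced representation of the stated form with exactly $2m$ distinct letters; once that is granted, the triangle inequality for word length does all the remaining work.
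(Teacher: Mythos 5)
Your proof is correct and follows essentially the same route as the paper: reduce to disjoint refinements of the covers $\gamma_1,\dots,\gamma_n$, use the cancellation argument of Lemma~\ref{lemma} to see that elements of $\langle U(\gamma)\rangle$ have reduced representations whose word length counts the pairs, and conclude that any such element lying in (a translate of) $B_n(X)$ is a sum of boundedly many members of $U(\gamma)$, hence lies in $U(\Gamma)$. The only difference is that you explicitly treat neighborhoods of an arbitrary $g\in B_n(X)$ via the triangle inequality $|h|\le|h+g|+|g|\le 2n$, whereas the paper states the estimate only at zero and leaves the translation step implicit; this is a point of extra care, not a different argument.
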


\begin{proof}
 We can assume without loss of generality that $n$ is even. 
Given any neighborhood $U$ of zero in $A(X)$ (in $B(X)$), it suffices to take 
a sequence $\Gamma=\{\gamma_k\}_{k\in \mathbb N}$ 
of disjoint covers such that $\frac n2\cdot U(\Gamma)\subset U$ and note that 
$\langle U(\gamma_1)\rangle\cap A_n(X) \subset \frac n2\cdot U(\gamma_1)\subset U$. 
\end{proof}

\subsection*{Free topological groups in the sense of Graev and Graev's extension of pseudometrics}

In~\cite{Graev1948} Graev proposed a procedure for extending any continuous pseudometric $d$ on $X$ 
to a maximal invariant pseudometric $\hat d$ on $F(X)$, which is easy to adapt to the Boolean 
case. Following Graev, we first consider free topological groups in the sense of Graev, in which 
the identity element  is identified with a point of the generating space and the universality 
property is slightly different: only continuous maps of the generating space to a topological group 
$G$ that take the distinguished point to the identity element of $G$ must extend to continuous 
homomorphisms~\cite{Graev1948}. 
Graev showed that the free topological and Abelian topological groups 
$F_G(X)$ and $A_G(X)$ in the sense of Graev are unique (up to 
topological isomorphism) and do not depend on the choice of the distinguished point; moreover, 
the free topological group in the sense of Markov is nothing but the Graev free 
topological group on the same space to which an isolated point is added (and 
identified with the identity element). By analogy with $F_G(X)$ and $A_G(X)$ the \emph{Graev free 
Boolean topological group $B_G(X)$} can be defined: we fix a point $x\in X$, identify it with the 
zero element of $B(X)$, and endow the resulting group  with the strongest 
group topology inducing the initially existing topology on $X$, or, equivalently, with the coarsest 
group topology such that any continuous map of $X$ to a Boolean topological group $G$ that take the 
distinguished point $x$ to the zero element of $G$ extends to a continuous homomorphism. 

The topological group $B_G(X)$ thus obtained is unique (up to a topological 
isomorphism) and does not depend on the choice of the distinguished point. Indeed, let $B_{G'}(X)$ 
and $B_{G''}(X)$ be the Graev free Boolean topological groups on $X$ in which 
the zero elements are identified with $x'\in X$ and $x''\in X$, respectively. The map 
$\varphi\colon X\to B_{G''}(X)$ taking each point of $X\subset B_{G'}(X)$ to the point $x+x' \in 
B_{G''}(X)$ is continuous, and the image of $x'$ is the zero of $B_{G''}(X)$. Therefore, $\varphi$ 
can be extended to a continuous homomorphism $\tilde \varphi\colon B_{G'}(X) \to B_{G''}(X)$. 
Similarly, the map $\psi\colon X\to B_{G'}(X)$ taking each point of $X\subset B_{G''}(X)$ to the 
point $x+x'' \in B_{G'}(X)$ is continuous, and the image of $x''$ is the zero of $B_{G'}(X)$. 
Therefore, $\psi$ can be extended to a continuous homomorphism $\tilde \psi\colon B_{G''}(X) \to 
B_{G'}(X)$. For each point $x\in X\subset B_{G'}(X)$, we have $$ \tilde\psi\tilde\varphi(x) = 
\tilde\psi(x+x')=\tilde\psi(x)+\tilde\psi(x')= x+x''+x'+x''=x+x'=x, $$ because $x'$ is the zero 
element of $B_{G'}(X)$. Thus, the continuous self-homomorphism $\tilde\psi\tilde\varphi$ of 
$B_{G'}(X)$ is the identity map on $X$. Since $X$ generates the group $B_{G'}(X)$, it follows that 
$\tilde\psi\tilde\varphi$ is the identity automorphism of $B_{G'}(X)$, and hence $\tilde\varphi$ is 
an isomorphism of $B_{G'}(X)$ onto $B_{G''}(X)$.

The extension  of a continuous pseudometric $d$ on $X$ 
to a maximal invariant continuous pseudometric $\hat d$ on 
the Graev free Boolean topological group $B_G(X)$  
is defined by setting 
$$
\hat d(g, h)=
\inf\Bigl\{\,\sum_{i=1}^n d(x_i, y_i)\colon n\in \mathbb N,\ x_i, y_i\in X, 
g= \sum_{i=1}^n x_i, \ h= \sum_{i=1}^n y_i\Bigr\}
$$
for any $g, h\in B_G(X)$. 
The infimum is taken over all representations of $g$ and $h$ as (reducible) words of 
equal lengths. 
The corresponding Graev seminorm 
$\|\cdot\|_d$ (defined by $\|g\|_d = \hat d(g, 0)$ for $g\in B_G(X)$, where 0 is the zero 
element of $B_G(X)$) is given by 
$$
\|g\|_d = 
\inf\Bigl\{\sum_{i=1}^n d(x_i, y_i)\colon
g= \sum_{i=1}^n (x_i+y_i),\ x_i, y_i\in X\Bigr\}.
$$
The infimum is attained at a word representing $g$ which may contain one 0 (if the 
length of $g$ is odd) and is otherwise reduced. Indeed, 
if the sum representing $g$ contains terms of the form $x+z$ and $z+y$,
then these terms can be replaced by one term $x+y$; the sum
$\sum_{i=1}^n d(x_i, y_i)$ does not increase under such a change thanks to the 
triangle inequality.

For the usual (Markov's) free Boolean topological group $B(X)$, 
which is the same as $B_G(X\oplus\{0\})$ (where $0$ is an isolated point idetified with  
zero), the Graev metric depends on the distances from the points of $X$ to the isolated point 
(they are usually set to 1 for all $x\in X$). The  
corresponding seminorm $\|\cdot\|_d$ on the subgroup $B_{\mathrm{even}}(X)$ of 
$B(X)$ consisting of words of  
even length does not change. 
The subgroup $B_{\mathrm{even}}(X)$ is open and closed in $B(X)$, 
because this is the kernel of the continuous homomorphism $\hat f\colon B(X)\to \{0, 1\}$ extending
the constant continuous map $f\colon X\to \{0, 1\}$ taking all $x\in X$ to $1$. Thus, in fact, 
it does not matter how to extend $\|\cdot\|_d$ to $B(X)\setminus B_{\mathrm{even}}(X)$; 
for convenience, we set 
$$ 
\|g\|_d= \begin{cases}
\min\Bigl\{\vtop{\hbox{\vphantom{$\Bigl\{$} $\sum_{i=1}^n d(x_i, y_i)\colon$} 
                 \hbox{\strut\quad $g= \sum_{i=1}^n (x_i+y_i),\ x_i, y_i\in X$,}
                 \hbox{\quad the word $\sum_{i=1}^n (x_i+y_i)$ is reduced$\Bigr\}$}}
& \text{ if $g\in B_{\mathrm{even}}(X)$},\\
1&\text{ if $g\in B(X)\setminus B_{\mathrm{even}}(X)$}.
\end{cases}
$$
All open balls of radius $1$ (or of any radius smaller than $1$) in all seminorms 
$\|\cdot\|_d$ for $d$ ranging over all continuous pseudometrics on $X$ form a base 
of open neighborhoods of zero in $B(X)$.

\subsection*{Boolean groups generated by almost discrete spaces}

A special role in the theory of topological groups and in set-theoretic topology is played 
by Boolean topological groups generated by \emph{almost discrete} spaces, that is, spaces having 
only one nonisolated point. Clearly, for any almost discrete space $X \cup \{*\}$, where $*$ is the 
only nonisolated point, the punctured neighborhoods of $*$ form a filter on $X$. Conversely, 
each free filter $\mathscr F$ on any set $X$ is naturally associated with the almost discrete space 
$X_{\mathscr F} = X \cup \{*\}$ ($*$ is a point not belonging to $X$); all points of $X$ are 
isolated and the neighborhoods of $*$ are $\{*\}\cup A$, $A\in \mathscr F$. For a space $X$ with 
infinitely many isolated points, there is no difference between the canonical definition of the 
groups $F(X)$, $A(X)$, and $B(X)$ and Graev's generalizations $F_G(X)$, $A_G(X)$, and $B_G(X)$: as 
mentioned above, Markov's group $B(X)$ is topologically isomorphic to $B_G(Y)$, where $Y$ is $X$ 
plus an extra isolated point. Thus, when dealing with spaces $X_{\mathscr F}$ associated with 
filters, we can identify $B(X_{\mathscr F})$ with $B_G(X_{\mathscr F})$ and assume that  the only 
nonisolated point is the zero of $B(X_{\mathscr F})$; the descriptions of the neighborhoods of zero 
and the Graev seminorm are altered accordingly. To understand how they change, take the new (but in 
fact the same) space $\widetilde X_{\mathscr F}= X_{\mathscr F}\cup\{0\}$, where $0$ is one more 
isolated point, represent $B(X_{\mathscr F})$ as the Graev free Boolean topological group 
$B_G(\widetilde X_{\mathscr F})$ with distinguished point (zero of $B_G(\widetilde X_{\mathscr 
F})$) $0$, and consider the topological isomorphism $g\mapsto g+0$ between this group and the 
similar group with distinguished point (zero)~$*$.

For example, since 
any open cover of $X_{\mathscr F}$ can be assumed to consist of a neighborhood of $*$ 
and  singletons, the description~II reads as follows in this 
case: For each $n\in \mathbb N$, we fix an arbitrary
neighborhood~$V_n$ of $*$, that is, $A_n\cup\{*\}$, where $A_n\in \mathscr F$, 
and set
$W = \{V_n\}_{n\in \mathbb N}$, $U(V_n) = \{x\colon
x\in V_n\}=\{x\colon x\in A_n\}$ ($*$ is zero in $B_G(X)$), and 
$$
U(W) =
\bigcup_{n\in \mathbb N} (U(V_1) + U(V_2) +\dots + U(V_n)) = 
\bigcup_{n\in \mathbb N}\{x_1+ \dots + x_n\colon x_i\in A_i\text{ for }i\le n\}.
$$
The sets $U(W)$, where the $W$ range over all sequences
of neighborhoods of $*$, form a~neighborhood base at zero for the topology
of $B(X_{\mathscr F})$. Strictly speaking, to obtain a full analogy with the description~II  
of the Markov free group topology, we should set 
\begin{multline*}
U(W) =
\bigcup_{n\in \mathbb N} (2U(V_1) + 2U(V_2) +\dots + 2U(V_n))\\ 
= \bigcup_{n\in \mathbb N}\{x_1+y_1 \dots + x_n+y_n\colon x_i,y_i\in V_i\text{ for }i\le n\},
\end{multline*}
but this would not affect the topology: the former $U(W)$ equals the latter for 
a sequence of smaller neighborhoods, say 
$V'_n = \bigcap_{i\le 2n}V_{i}$ (remember that some of the $x_i$ and $y_i$ in the 
expression for $U(W)$
may equal $*$, that is, vanish).

Similarly, the base neighborhoods of zero in description~III take the form 
$$
U(d)=\Bigl\{x_1+x_2+\dots+x_n\colon
n\in\mathbb N,\ x_1, \dots, x_n\in X,\ \sum_{i=1}^{n} d(x_i,*)<1\Bigr\},
$$
where $d$ ranges over continuous pseudometrics on $X_{\mathscr F}$. (Again, we should 
set 
\begin{multline*}
U(d)=\Bigl\{x_1+y_1+ x_2+y_2+\dots+x_n+y_n\colon\\
n\in\mathbb N,\ x_i, y_i\in X\cup\{*\},\ \sum_{i=1}^{n} d(x_i,y_i)<1\Bigr\},
\end{multline*}
but this would not make any difference.)

It is also easy to see that the isomorphism between $B_G(\widetilde X_{\mathscr F})$ (with 
distinguished point $*$) and $B(X_{\mathscr F})$ does not essentially affect the sets 
of words of length at most $n$; in particular, they remain closed, and 
$B_G(\widetilde X_{\mathscr F})$ is the inductive limit 
of these sets with the induced topology 
if and only if $B(X_{\mathscr F})$ has the inductive limit topology. In what 
follows, we identify $B_G(\widetilde X_{\mathscr F})$ with $B(X_{\mathscr F})$ and use the notation 
$B(X_{\mathscr F})$ for the Graev free Boolean topological group on $X_{\mathscr F}$ 
with zero $*$. 

Thus,  $B(X_{\mathscr F})$  
is naturally identified with the set $[X]^{<\omega}$ of all finite subsets 
of $X$ with the operation $\triangle$ of symmetric difference ($A \triangle B = 
(A\setminus B)\cup (B\setminus A)$). The point $*$, 
which is the zero element of $B(X_{\mathscr F})$, is identified with 
the empty set $\emptyset$, which belongs to $[X]^{<\omega}$ as the zero element. 
Sets of the form $[X]^{<\omega}$ often arise in set-theoretic topology 
and in forcing. The role of $X$ is usually played by $\omega$, and the filter $\mathscr F$ 
is often an ultrafilter with certain properties. 

In the context 
of free Boolean groups on almost discrete 
spaces we identify each $n\in \omega$ with the one-point 
set $\{n\}\in [\omega]^{<\omega}$.

\section{A Comparison of Free, Free Abelian,\\ and Free Boolean Topological Groups}

\subsection*{Similarity}
There are a number of known properties of free and free Abelian topological groups which 
automatically carry over to free Boolean topological groups simply because they are preserved by 
taking topological quotient groups or, more generally, by continuous maps. 
Thus, if $F(X)$ (and $A(X)$) is separable, Lindel\"of, ccc, and so on, 
then so is $B(X)$. It is also quite obvious that $X$ is discrete if and only if so are 
$F(X)$, $A(X)$, and $B(X)$. 

Let $X$ be a space, and let $Y$  be its subspace. The topological 
subgroup $B(Y|X)$ of $B(X)$ generated by $Y$ 
is not always the free Boolean topological group on $Y$ (the induced topology may be 
coarser). Looking at the description~I of the free group topology on $B(X)$, we see that $X$ 
and $Y$ equipped with universal uniformities $\mathscr U_X$ and $\mathscr U_Y$ are uniform 
subspaces of $B(X)$ and $B(Y)$ with their group uniformities $\mathscr W_{B(X)}$ and $\mathscr 
W_{B(Y)}$ (generated by entourages of the form $W(U) = \{(g, h)\colon h\in g + U\}$, where $U$ 
ranges over all neighborhoods of zero in the corresponding group), which completely determine the 
topologies of $B(X)$ and $B(Y)$. Thus, if the topology of $B(Y|X)$ coincides with that of 
$B(Y)$, then, like in the case of free and free Abelian topological 
groups~\cite{Uspenskii1990, Tkachenko-Abelian},  
$(Y, \mathscr U_Y)$ must be a uniform subspace of $(X, \mathscr U_X)$, which means that any 
bounded continuous pseudometric on $Y$ can be extended to a continuous pseudometric on $X$ 
(in this case, $Y$ is said to be \emph{$P$-embedded} in $X$~\cite{P-embedding}). 
The converse has been proved to be true for 
free Abelian (announced in~\cite{Tkachenko-Abelian}, proved in~\cite{Uspenskii1990}) 
and even free \cite{Sipa-subspaces}\footnote{See also~\cite{VINITI}, where a minor misprint
in the condition~$3^\circ$ on p.~186 of~\cite{Sipa-subspaces} is corrected.}
topological groups. This immediately implies the following 
theorem.

\begin{theorem}
\label{subspace}
Let $X$ be a space, and let $Y$ be its subspace. The topological subgroup of the 
free Boolean groups $B(X)$ generated by $Y$ is the free topological group $B(Y)$ if
and only if each bounded continuous pseudometric on $Y$ can be
extended to a continuous pseudometric on $X$.
\end{theorem}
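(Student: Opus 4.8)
The plan is to reduce the statement to the already-established analogous facts for free and free Abelian topological groups. As noted in the discussion preceding the theorem, the ``only if'' direction is essentially the observation that if the subgroup topology on $B(Y\mid X)$ agrees with the free topology of $B(Y)$, then description~I forces $(Y,\mathscr U_Y)$ to be a uniform subspace of $(X,\mathscr U_X)$: the group uniformity $\mathscr W_{B(X)}$ restricted to $Y$ (via the canonical embedding $y\mapsto y$) induces $\mathscr U_Y$, and entourages of $\mathscr W_{B(X)}$ restrict to entourages generated by the uniform entourages $W_n$ of $X$. Since a space is $P$-embedded precisely when every bounded continuous pseudometric extends, this gives necessity. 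For sufficiency, the cleanest route is to invoke the free Abelian case directly: by~\cite{Uspenskii1990, Tkachenko-Abelian}, if $Y$ is $P$-embedded in $X$ then $A(Y\mid X) = A(Y)$ topologically, and then pass to the Boolean quotient.

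More precisely, I would argue as follows. Recall $B(X) = A(X)/A(2X)$ via the canonical continuous open homomorphism $q_X\colon A(X)\to B(X)$, and similarly $q_Y\colon A(Y)\to B(Y)$; these are compatible with the inclusion-induced maps, so the diagram relating $A(Y)\hookrightarrow A(X)$ and $B(Y\mid X)\hookrightarrow B(X)$ commutes. Assume $Y$ is $P$-embedded in $X$. By the free Abelian result, the inclusion $A(Y)\to A(X)$ is a topological embedding, i.e.\ $A(Y\mid X) = A(Y)$. Now I claim $q_X$ maps $A(Y\mid X)$ onto $B(Y\mid X)$ and the induced map $A(Y)/\big(A(Y)\cap A(2X)\big)\to B(Y\mid X)$ is a topological isomorphism; since $A(Y)\cap A(2X) = A(2Y)$ (a reduced word of $A(Y)$ lies in the subgroup of squares of $A(X)$ iff it does so in $A(Y)$ — a purely algebraic fact about the free group), this identifies $B(Y\mid X)$ with $A(Y)/A(2Y) = B(Y)$ algebraically. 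The topological part requires that the quotient topology on $A(Y)/A(2Y)$ coming from $A(Y\mid X)$ agrees with that coming from $A(Y)$; but these are literally the same topology once $A(Y\mid X) = A(Y)$, so one only needs that restricting an open quotient map to a subgroup whose image carries the subgroup topology again yields an open map onto that image. This last point is where I expect the only real friction.

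The main obstacle, then, is the bookkeeping showing that $q_X|_{A(Y\mid X)}\colon A(Y)\to B(Y\mid X)$ is open onto $B(Y\mid X)$ with the subgroup topology from $B(X)$. Openness of $q_X$ on all of $A(X)$ is given, but a restriction of an open map need not be open; one needs that a neighborhood base of zero in $B(Y\mid X)$ is obtained by intersecting $B(Y)$ (sitting inside $B(X)$) with $q_X$-images of neighborhoods of zero in $A(X)$, and that each such trace equals $q_X\big(U\cap A(Y)\big)$ for a suitable neighborhood $U$. This is handled by working with the concrete base of description~I (or~II): a basic neighborhood $U(\widetilde W)$ of zero in $B(X)$ built from entourages $W_n\in\mathscr U_X$ satisfies $U(\widetilde W)\cap B(Y\mid X) = U(\widetilde W\!\restriction Y)$ up to the $P$-embedding (which lets one realize the restricted entourages $W_n\cap(Y\times Y)$ as traces of entourages of $X$, and conversely extend entourages of $Y$), and $q_X$ carries the corresponding $A(X)$-neighborhood onto $U(\widetilde W)$. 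Alternatively — and this is what I would actually write up — one sidesteps the quotient computation entirely by checking the universal property: $Y$ embeds topologically in $B(Y\mid X)$ (immediate, since $Y$ embeds in $B(X)$ and hence in the subgroup), every continuous map $Y\to G$ to a Boolean topological group extends to a continuous homomorphism $B(Y\mid X)\to G$ because $P$-embeddedness lets one extend the relevant pseudometric/uniform data from $Y$ to $X$, apply the universal property of $B(X)$, and restrict. Verifying that extension step is exactly the content of the free/free-Abelian theorems cited, so the Boolean case follows with no new estimates — only the remark that the subgroup of squares behaves well under the inclusion, which is the algebraic triviality recorded above.
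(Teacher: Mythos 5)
Your proposal takes essentially the same route as the paper: necessity by observing that $Y$ with its universal uniformity must be a uniform subspace of $(X,\mathscr U_X)$ (equivalently, $P$-embedded), and sufficiency by reduction to the Tkachenko--Uspenskii theorem for free (Abelian) groups --- the paper itself says no more than that the Abelian case ``immediately implies'' the Boolean one, so your explicit worry about the quotient bookkeeping is, if anything, more careful than the original. One caution about the variant you say you would actually write up: you cannot literally ``apply the universal property of $B(X)$ and restrict,'' because a continuous map $Y\to G$ need not extend to a continuous map $X\to G$; the step that actually closes the gap is to extend the pseudometric $\rho(f(\cdot),f(\cdot))$ from $Y$ to $X$ and then use description~III together with the remark in Section~\ref{secdescriptions} that the infimum defining the Graev seminorm $\|\cdot\|_d$ is attained at a reduced representation, so that for $g\in B(Y|X)$ the minimizing representation uses only letters of $g$, all of which lie in $Y$.
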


Any space $X$ is closed in its free Boolean topological group $B(X)$, as well as in $F(X)$ 
and $A(X)$ (see, e.g., \cite[Theorems~2.1 and 2.2]{Morris2}). Moreover, all $F_n(X)$, $A_n(X)$, 
and  $B_n(X)$ (the sets of words of length at most $n$) are closed in their respective groups as 
well. The most elegant proof of this fact was first proposed by Arkhangel'skii in the unavailable 
book~\cite{Arkhangel'skii1969} (for $F(X)$, but the argument works for $A(X)$ and $B(X)$ without 
any changes): note that all $F_n(\beta X)\subset F(\beta X)$ are compact, since these are 
the continuous images of $(X\oplus \{e\} \oplus X^{-1})^n$ 
under the natural multiplication maps 
$i_n\colon (x_1^{\varepsilon_1}, \dots , x_n^{\varepsilon_n})\mapsto x_1^{\varepsilon_1} \dots x_n^{\varepsilon_n}$. 
(Here $e$ denotes the identity element of $F(X)$, $\varepsilon_i = \pm 1$, and the word 
$x_1^{\varepsilon_1} \dots x_n^{\varepsilon_n}$ may be reducible, i.e., have length 
shorter than $n$.)
Therefore, the $F_n(\beta X)$ are closed in $F(\beta X)$, and hence the sets $F_n(X)=F_n(\beta X) 
\cap F(X|\beta X)$ are closed in $F(X|\beta X)$. It follows that these sets are also closed 
in $F(X)$, which is the same group as $F(X|\beta X)$ but has stronger topology. 

The topological structure of a free group becomes much clearer when this group 
has the inductive limit topology (or, equivalently, when the inductive limit topology 
is a group topology). The problem of describing all spaces for which $F(X)$ (or $A(X)$) possesses  
this property has proved extremely difficult (and is still unsolved). 
Apparently, the problem was first stated explicitly by Pestov and
Tkachenko in 1985~\cite{UP}, but it was tackled as early as in 1948 by Graev~\cite{Graev1948}, 
who proved that the free topological group of a compact space has the
inductive limit topology. Then Mack, Morris, and Ordman~\cite{MMO} proved
the same for $k_{\omega}$-spaces. Apparently, the strongest
result in this direction was obtained by Tkachenko~\cite{Tkachenko-indlim}, 
who proved that if
$X$ is a $P$-space or a $C_{\omega}$-space (the latter means $X$
is the inductive limit of an increasing sequence $\{X_{n}\}$ of its closed
subsets such that all finite powers of each $X_{n}$ are countably compact
and strictly collectionwise normal), then $F(X)$ has the inductive limit
topology. All these sufficient conditions are also 
valid for $A(X)$ and $B(X)$ by virtue of the following 
simple observation.  

\begin{proposition} 
\label{indlim-prop}
Suppose that $X=\bigcup_{n\in \omega} X_n$, $Y =\bigcup_{n\in \omega} Y_n$, 
$X$ is the inductive limit of its subspaces $X_n$, $n\in \omega$, and 
$f\colon X \to Y$ is an open continuous map such that $f(X_n)=Y_n$ for each $n\in \omega$. 
Then $Y$ is the inductive limit of its subspaces $Y_n$. 
\end{proposition}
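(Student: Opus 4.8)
The plan is to verify the definition of the inductive limit topology on $Y$ directly, using the openness and continuity of $f$ together with the assumption that $X$ has the inductive limit topology with respect to $\{X_n\}$. Recall that $Y$ has the inductive limit topology with respect to $\{Y_n\}$ precisely when a set $V\subset Y$ is open in $Y$ if and only if $V\cap Y_n$ is open in $Y_n$ for every $n$. One implication is automatic: if $V$ is open in $Y$, then $V\cap Y_n$ is open in the subspace $Y_n$ for each $n$, with no hypotheses needed. So the work is entirely in the converse.

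So suppose $V\subset Y$ is such that $V\cap Y_n$ is open in $Y_n$ for every $n\in\omega$; I must show $V$ is open in $Y$. Since $X$ has the inductive limit topology with respect to $\{X_n\}$, it suffices to show that $f^{-1}(V)\cap X_n$ is open in $X_n$ for each $n$ — then $f^{-1}(V)$ is open in $X$, and since $f$ is open and surjective onto $Y$ (surjectivity follows from $f(X_n)=Y_n$ and $Y=\bigcup Y_n$), we get $V=f(f^{-1}(V))$ open in $Y$. Here I would note the small point that $V = f(f^{-1}(V))$ uses surjectivity of $f$; alternatively one argues that $Y\setminus V$ is closed, but the open-image route is cleanest.

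It remains to check that $f^{-1}(V)\cap X_n$ is open in $X_n$. The natural guess is that $f^{-1}(V)\cap X_n = (f\restriction X_n)^{-1}(V\cap Y_n)$, and indeed this holds as a set identity: for $x\in X_n$, $x\in f^{-1}(V)$ iff $f(x)\in V$, and since $f(x)\in Y_n$ (because $f(X_n)=Y_n$), this is equivalent to $f(x)\in V\cap Y_n$. Now $f\restriction X_n\colon X_n\to Y_n$ is continuous (as a restriction of the continuous map $f$, with codomain restricted to the subspace $Y_n$ which contains the image), and $V\cap Y_n$ is open in $Y_n$ by hypothesis, so its preimage $f^{-1}(V)\cap X_n$ is open in $X_n$. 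This completes the argument.

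I do not expect a genuine obstacle here; the statement is a soft, diagram-chasing fact. The only thing to be careful about is bookkeeping: keeping track of which maps are restricted to which domains and codomains, making sure the surjectivity of $f$ onto $Y$ is explicitly invoked for the identity $V=f(f^{-1}(V))$, and confirming that openness of $f$ as a map $X\to Y$ (not merely onto its image) is what the hypothesis gives. If one preferred to avoid surjectivity, one could instead run the whole argument with ``closed'' in place of ``open'' throughout, which is the equivalent formulation of the inductive limit condition noted in the Preliminaries; but the open-map version is the natural match to the stated hypothesis.
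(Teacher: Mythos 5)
Your proof is correct and follows essentially the same route as the paper: both reduce openness of $V$ in $Y$ to openness of $f^{-1}(V)$ in $X$ via the inductive limit property, checking that $f^{-1}(V)\cap X_n=(f\restriction X_n)^{-1}(V\cap Y_n)$ is open in $X_n$ (the paper does this by writing $V\cap Y_n=W_n\cap Y_n$ for $W_n$ open in $Y$, you by citing continuity of the restriction $f\restriction X_n\colon X_n\to Y_n$ — the same computation), and then applying openness and surjectivity of $f$ to conclude $V=f(f^{-1}(V))$ is open. Your explicit remarks on where $f(X_n)=Y_n$ and surjectivity are used are accurate and, if anything, slightly more careful than the paper's write-up.
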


\begin{proof} Let $U\subset Y$ be such that all 
$U_{n}=U\cap Y_n$ are open in $Y_n$. 
Consider $V=f^{-1}(U)$ and
$V_{n}=f^{-1}(U_{n})\cap X_{n}$ for $n\in \omega$.  For each $n$, fix open
$W_{n}\subset Y$  for which $W_{n}\cap Y_{n}=U_{n}$. We have
$$
V_{n}=f^{-1}(W_{n}\cap Y_{n})\cap X_{n}
= (f^{-1}(W_{n})\cap f^{-1}(Y_{n}))\cap X_{n}=f^{-1}(W_{n})\cap X_{n};
$$
hence each $V_{n}$ is open in $X_{n}$.
On the other hand,
$$
V_{n}=f^{-1}(U\cap Y_{n})\cap X_{n}=
(f^{-1}(U)\cap f^{-1}(Y_{n}))\cap X_{n}=
V\cap X_{n};
$$
therefore, $V$ is open in $X$. Since the map $f$ is open, it follows that 
$U=f(V)$ is an open set.
\end{proof}

For $X$ of the form $\omega_{\mathscr F}$ (where $\mathscr F$ is a filter on $\omega$), not only 
the sufficient conditions mentioned above 
but also a necessary and sufficient condition for $F(X)$ and $A(X)$ to have the 
inductive limit topology is known. This condition is also valid for $B(X)$. 

\begin{theorem}
Given a filter $\mathscr F$ on $\omega$, $B(\omega_{\mathscr F})$ has the inductive limit topology 
if and only if $\mathscr F$ is a $P$-filter. 
\end{theorem}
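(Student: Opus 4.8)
The plan is to prove both directions by exploiting the concrete description of $B(\omega_{\mathscr F})=[\omega]^{<\omega}$ with symmetric difference, together with the description~II of neighbourhoods of zero, which on $\omega_{\mathscr F}$ reads $U(W)=\bigcup_n\{x_1+\dots+x_n:x_i\in A_i\}$ for sequences $A_i\in\mathscr F$, and the fact (Proposition~\ref{indlim-prop}, with $f$ the quotient map $A(\omega_{\mathscr F})\to B(\omega_{\mathscr F})$) that if $A(\omega_{\mathscr F})$ has the inductive limit topology then so does $B(\omega_{\mathscr F})$. For the ``if'' direction I would actually give a direct argument: assuming $\mathscr F$ is a $P$-filter, I must show that a set $W\subset B(\omega_{\mathscr F})$ meeting each $B_n$ in a relatively open set is open, i.e.\ contains a neighbourhood $g+U(W)$ of each of its points $g$. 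Since translation by $g\in B_k$ shifts $B_n$ into $B_{n+k}$, it suffices to handle $g=\emptyset$; so given that $W\cap B_n$ is open in $B_n$ for every $n$ and $\emptyset\in W$, I choose inductively sets $A_n\in\mathscr F$ such that $\{x_1+\dots+x_n:x_i\in A_i\}\cap B_n\subset W$, using the openness of $W\cap B_n$ and the fact that $B_{n-1}$ is closed in $B_n$ (Theorem~\ref{free-linear}, or the Arkhangel'skii argument) to separate the length-$n$ part from shorter words. The $P$-filter property then lets me pass to a pseudointersection $A\in\mathscr F$ of $\{A_n\}$; replacing each $A_n$ by $A_n\cap A$ (still in $\mathscr F$) and discarding the finitely many elements of $A\setminus A_n$ appropriately, I get a single sequence witnessing $U(W)\subset W$ for the corresponding $W$, which is what is needed.

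For the ``only if'' direction I would prove the contrapositive: if $\mathscr F$ is not a $P$-filter, I exhibit a set $W\subset B(\omega_{\mathscr F})$ that meets every $B_n$ in a relatively open set but is not a neighbourhood of $\emptyset$, hence (since every neighbourhood of zero in the inductive limit topology would have to contain such a basic set, and $\emptyset\in W$) $B(\omega_{\mathscr F})$ does not carry the inductive limit topology — or, more precisely, the inductive limit topology is strictly finer than the group topology, so they differ. Fix a family $A_i\in\mathscr F$, $i\in\omega$, with no pseudointersection in $\mathscr F$; without loss of generality the $A_i$ are decreasing. The idea is to let $W$ consist of those finite sets $s\in[\omega]^{<\omega}$ whose elements are ``spread out'' relative to the $A_i$: declare $s=\{n_1<\dots<n_k\}\in W$ iff $n_{j+1}\in A_{n_j}$ for all $j$ (with a suitable convention at the start), together with $\emptyset\in W$. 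Because membership of a word of length $\le n$ in $W$ only constrains the first $n$ coordinates and each constraint $n_{j+1}\in A_{n_j}$ is an ``eventually all'' condition, $W\cap B_n$ is open in $B_n$ for every $n$. On the other hand, if $W$ contained a basic neighbourhood $U(W')$ for some sequence $B_i\in\mathscr F$, then for every $k$ and every choice of $x_1\in B_1,\dots,x_k\in B_k$ the set $\{x_1,\dots,x_k\}$ would lie in $W$; choosing the $x_i$ greedily and using that the $B_i$ meet every $A_m$ in an infinite set, one builds an increasing sequence $n_1<n_2<\dots$ with $n_{j+1}\in A_{n_j}\cap B_{j+1}$, and its range is then (almost) a pseudointersection of $\{A_i\}$ lying below an element of $\mathscr F$ — contradicting the choice of the $A_i$. (One has to be a little careful to extract from such a sequence an actual set in $\mathscr F$ almost-contained in every $A_i$; this is where the argument must be made precise.)

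The main obstacle, and the step I expect to require the most care, is exactly this last extraction in the ``only if'' direction: turning the failure of $W$ to contain a basic neighbourhood into a genuine violation of the $P$-filter property. The subtlety is that a neighbourhood $U(W')=\bigcup_n\{x_1+\dots+x_n:x_i\in B_i\}$ allows the $x_i$ to coincide or to equal $*$ (i.e.\ to cancel), so membership of \emph{all} words in $W$ is a stronger hypothesis than membership of all ``spread out'' words; I must choose the definition of $W$ robustly enough that it still has open traces on all $B_n$ yet still forces a pseudointersection out of any contained basic neighbourhood. A clean way to manage this is to phrase $W$ via a rapidly-growing ``gauge'': fix for each $i$ an enumeration and define $s=\{n_1<\dots<n_k\}\in W$ iff $n_j\in A_{i}$ for all $i\le j-1$ and all $j\le k$ (equivalently $n_j\in A_{n_{j-1}}$ as above); reducibility then only ever shortens words and the shorter word automatically still satisfies the (fewer) constraints, so $W$ is closed under the cancellations allowed in $U(W')$, and $W\cap B_n$ open in $B_n$ follows because the constraints on a length-$\le n$ word involve only finitely many of the cofinite-type conditions ``$n_j\in A_i$.'' With $W$ set up this way, a contained $U(W')$ forces an $A\in\mathscr F$ (a suitable $B_m$, or its range restricted) almost-below every $A_i$, the desired contradiction. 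The ``if'' direction is then essentially the mirror image of this construction and, modulo the bookkeeping with the closedness of the $B_{n-1}$ in $B_n$, is routine.
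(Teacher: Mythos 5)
Both directions of your sketch have real gaps at exactly the points where the paper has to work hardest. For the \emph{if} direction, the paper does not argue directly: it quotes the (nontrivial) theorem that $F(\omega_{\mathscr F})$ and $A(\omega_{\mathscr F})$ have the inductive limit topology for $P$-filters and transfers it to $B(\omega_{\mathscr F})$ via the open quotient homomorphism and Proposition~\ref{indlim-prop}. Your direct argument founders at the inductive step: to choose $A_n$ with $\{x_1+\dots+x_n\colon x_i\in A_i\}\subset W$ you must find one element of $\mathscr F$ that works simultaneously for the infinitely many prefixes $g=x_1+\dots+x_{n-1}$ already committed to, each of which only supplies its own $A^{(g)}\in\mathscr F$ with $g+A^{(g)}\subset W$. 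A $P$-filter gives a pseudointersection $A$ with $|A\setminus A^{(g)}|<\omega$, but the finite exceptional set depends on $g$, so ``discarding the finitely many elements of $A\setminus A_n$ appropriately'' does not produce a single basic neighborhood inside $W$. This is precisely the difficulty that makes the cited result hard; the sketch assumes it away. (Falling back on the citation plus Proposition~\ref{indlim-prop}, as you mention in passing, is of course fine --- that \emph{is} the paper's proof.)

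For the \emph{only if} direction your strategy (a set with open traces on every $B_n$ that is not a neighborhood of $\emptyset$) is legitimate and dual to the paper's (a set closed in the inductive limit topology whose free-topology closure contains $\emptyset$), but neither of your two candidate sets $W$ does both jobs. The ``linked'' version ($n_{j+1}\in A_{n_j}$) does force a diagonal, hence pseudo-, intersection out of any contained basic neighborhood, but its trace on $B_{k+2}$ is open at a point of length $k$ only if $\mathscr F$ contains a set $C$ with $y\in A_x$ for all $x<y$ in $C$ --- a diagonal intersection, i.e.\ selectivity (compare Theorem~\ref{Thuemmel}\,(iv)), which is strictly stronger than failing to be a $P$-filter would allow you to refute: a non-Ramsey $P$-point gives a $P$-filter for which this $W$ has non-open traces. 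The ``levelled'' version ($n_j\in A_{j-1}$) does have open traces (take $C=A_{n-1}$), but a contained basic neighborhood $U(\widetilde B)$ only yields $|B_n\setminus A_{n-1}|<n$ for each $n$ separately, with \emph{different} sets $B_n$, and no single $A\in\mathscr F$ almost contained in every $A_i$ emerges. The paper threads this needle differently: it enumerates the levels $A_n\setminus A_{n+1}=\{x_{ni}\}$ and builds $D_n$ from words $x_{nm}+x_{i_1j_1}+\dots+x_{i_nj_n}$ with interleaved indices; each $D_n$ is closed and discrete because the retraction of $\omega_{\mathscr F}$ onto the single level $\{x_{nm}\colon m\}$ is continuous onto a discrete group, $D=\bigcup D_n$ meets each $B_k$ in a finite union of closed sets, and the failure of the $P$-property guarantees that for every continuous pseudometric some one level has infinitely many points near $*$, which puts $\emptyset$ in $\overline D$. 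If you want to keep your ``open set'' formulation, you would need a definition of $W$ combining the robustness of the levelled version with the rigidity of the linked one; as it stands, the extraction step you flag as the main obstacle is genuinely unresolved.
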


\begin{proof}
This theorem is true for free and free Abelian topological groups~\cite{Sipa-indlim}. 
Therefore, by Proposition~\ref{indlim-prop}, 
$B(\omega_{\mathscr F})$ has the inductive limit topology for any $P$-filter. It remains to prove 
that if $\mathscr F$ is not a $P$-filter, then $B(\omega_{\mathscr F})$ is not the inductive 
limit of the $B_n(\omega_{\mathscr F})$.

Thus, suppose that $\mathscr F$ is not a $P$-filter (or, equivalently, there 
exist a decreasing sequence of $A_n\in \mathscr F$, $n\in \omega$, 
such that, for any $A\in \mathscr F$, there is an $i$ for which the 
intersection $A\cap A_i$ is infinite) but $B(\omega_{\mathscr F})$ is the inductive limit
of the $B_n(\omega_{\mathscr F})$. As usual, we assume that the zero element of 
$B(\omega_{\mathscr F})$ is 
the nonisolated point $*$ of $\omega_{\mathscr F}$. 

Without loss of generality, we can assume that $A_0=\omega$ and all sets 
$A_n\setminus A_{n+1}$ are infinite. We enumerate these sets as 
$$
A_n\setminus A_{n+1}=\{x_{ni}\colon i\in \omega\}
$$
and put 
\begin{multline*}
	D_{n}=\{x_{nm}+x_{i_1j_1}+x_{i_2j_2}+ \dots +x_{i_nj_n}\colon \\
n<i_1<i_2<\dots <i_n<j_1< j_2< \dots <j_n<m\}
\end{multline*}
for all $n\in\omega$.
Let us show that each $D_{n}$ is a closed discrete subset of $B(\omega_{\mathscr F})$. 
Fix $n$
and consider $X=\{*\}\cup\{x_{nm}\colon  m\in\omega\}$ and the retraction $r\colon 
\omega_{\mathscr F}\to X$ 
that maps $\omega_{\mathscr F}\setminus X$ to $\{*\}$. Clearly, $X$ is discrete and the map
$r$ is continuous. Let $\hat r\colon B(\omega_{\mathscr F})\to B(X)$
be the homomorphic extension of $r$; then $\hat r$
continuously maps $B(\omega_{\mathscr F})$ onto the discrete group $B(X)$.
For any $g\in B(\omega_{\mathscr F})$, the set $\hat r^{-1}(g)\cap
D_{n}$ is finite: if $\hat r^{-1}(g)\cap
D_{n}$ is nonempty, then we have 
$g= \hat r(x_{nm_{0}}+x_{i_{0_1}j_{0_1}}+x_{i_{0_2}j_{0_2}}+ \dots +
x_{i_{0_n}j_{0_n}})$ 
for some $m_{0},i_{0_k},j_{0_k}\in
\omega$ such that $n<i_{0_1}<i_{0_2}<\dots <i_{0_n}<j_{0_1}< j_{0_2}< \dots <j_{0_n}<m$, whence 
$g= x_{nm_{0}}$ 
and
\begin{multline*}
\hat r^{-1}(g)\cap D_{n}=
\{x_{nm_{0}}+x_{i_1j_1}+x_{i_2j_2}+ \dots +x_{i_nj_n}
\colon\\ 
n<i_1<i_2<\dots <i_m<j_1< j_2< \dots <j_n<m_{0}\}.
\end{multline*}
Since the sets $\hat r^{-1}(g)$, $g\in B(X)$, form an open cover of $B(\omega_{\mathscr F})$, 
it follows that $D_{n}$ is a closed discrete subspace of $B(\omega_{\mathscr F})$.

The length of each word in $D_n$ equals $n+1$. 
Therefore, $D=\bigcup_{n}D_n$ is closed in the inductive limit topology.
It remains to show that $*$ (the zero of $B(\omega_{\mathscr F})$) 
belongs to the closure of $D$ in the free group 
topology, i.e., that $U(d)\cap D\neq\emptyset$ for any continuous pseudometric
$d$ on $\omega_{\mathscr F}$ (see the description~III of the topology of $B(\omega_{\mathscr F})$).

Take an arbitrary (continuous) pseudometric $d$ on $\omega_{\mathscr F}$. 
In $\omega_{\mathscr F}$ 
the ball $B_{d}(*,\frac12)$ of radius $\frac12$ centered at $*$ with respect to $d$ 
is a neighborhood of $*$; that is, the punctured ball (with $*$ removed)
belongs to $\mathscr F$. By assumption, the set
$M=\{m\in\omega:\ d(*,x_{nm})<\frac12)\}$ is infinite for some
$n\in\omega$. Since 
$B_{d}(*,\frac1{2n})\cap A_{n+1}$ is a punctured neighborhood of $*$,
it follows that the sets 
$J_i=\{j\in\omega:\ d(*,x_{ij})<\frac1{2n}\}$ are 
infinite for infinitely many $i>n$. Choose $i_1<i_2 <\dots <i_n$ greater than $n$ so that 
all $J_{i_k}$ are infinite, in each $J_{i_k}$ choose $j_k$ so that $i_n<j_1<\dots<j_n$, 
and take $m\in M$ such that $m>j_n$. We have 
$g=x_{nm}+x_{i_1j_1}+x_{i_2j_2}+ \dots +x_{i_nj_n}\in D_{n}$. 
We also have $g\in U(d)$, because
$$
d(*,x_{nm})+\sum_{k=1}^n d(*,x_{i_kj_k}) < \frac12 +
n\frac1{2n}=1.
$$
Therefore, $g \in D_{n}\cap U(d)$.
\end{proof}

In~\cite{Tkachuk-oplus} Tkachuk proved that the free Abelian topological group of a 
disjoint union of two spaces $X$ and $Y$ is topologically isomorphic to  the direct sum 
$A(X)\bigoplus A(Y)= A(X)\times A(Y)$. His argument carries over  
to varieties of Abelian topological groups closed under direct sums 
(or, in topological terminology, 
$\sigma$-products with respect to the zero elements of factors) 
with the box topology. We denote such sums by $\sigma\square$.

\begin{theorem}
\label{oplus}
For any family $\{X_\alpha\colon \alpha\in A\}$ of spaces, 
$$
A\Bigl(\bigoplus_{\alpha\in A} X_\alpha\Bigr) \cong
\sigma\square_{\alpha\in A} A(X_\alpha)
\qquad\text{and}\qquad
B\Bigl(\bigoplus_{\alpha\in A} X_\alpha\Bigr) \cong
\sigma\square_{\alpha\in A} B(X_\alpha).
$$ 
If all $X_\alpha$ are zero-dimensional, then 
$$
A^{\mathrm{lin}}\Bigl(\bigoplus_{\alpha\in A} X_\alpha\Bigr) \cong
\sigma\square_{\alpha\in A} A^{\mathrm{lin}}(X_\alpha)
\qquad\text{and}
\qquad 
B^{\mathrm{lin}}\Bigl(\bigoplus_{\alpha\in A} X_\alpha\Bigr) \cong
\sigma\square_{\alpha\in A} B^{\mathrm{lin}}(X_\alpha).
$$
\end{theorem}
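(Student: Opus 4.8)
The plan is to follow Tkachuk's argument for $A(X\oplus Y)\cong A(X)\oplus A(Y)$, recast it in the language of $\sigma$-products with the box topology (denoted $\sigma\square$), and verify the two ingredients it needs: that the relevant variety is closed under $\sigma\square$-sums, and that $\sigma\square$ satisfies the appropriate universality property. First I would set up the abstract algebra: for a Boolean group, $B(\bigoplus_\alpha X_\alpha)$ and $\bigoplus_\alpha B(X_\alpha)$ are the same \emph{abstract} group, since a basis of the left side is $\bigsqcup_\alpha X_\alpha$ and a basis of the right side is the union of the (disjoint) copies of the $X_\alpha$; algebraically this is just the observation that the free Boolean functor takes coproducts of sets to coproducts of Boolean groups. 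Identifying these groups, the statement becomes: the free Boolean \emph{topological} group topology on $\bigoplus_\alpha X_\alpha$ coincides with the $\sigma\square$-topology on $\bigoplus_\alpha B(X_\alpha)$ (with each $B(X_\alpha)$ carrying its own free Boolean topological group topology). The same setup works verbatim for $A$, and, restricting to zero-dimensional $X_\alpha$ and linear topologies, for $A^{\mathrm{lin}}$ and $B^{\mathrm{lin}}$.

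To prove the two topologies agree, I would check each inclusion via a universality argument, which is cleaner than comparing neighborhood bases directly. For one direction: the inclusions $X_\beta\hookrightarrow \bigoplus_\alpha X_\alpha\hookrightarrow B(\bigoplus_\alpha X_\alpha)$ induce continuous homomorphisms $B(X_\beta)\to B(\bigoplus_\alpha X_\alpha)$, and these assemble (on the algebraic direct sum) into a homomorphism $\Phi\colon \sigma\square_\alpha B(X_\alpha)\to B(\bigoplus_\alpha X_\alpha)$ which I claim is continuous: a basic neighborhood of $0$ in $B(\bigoplus_\alpha X_\alpha)$ is $U(d)$ for a continuous pseudometric $d$ (description~III), and the point is that a continuous pseudometric on $\bigoplus_\alpha X_\alpha$ restricts to a continuous pseudometric $d_\alpha$ on each $X_\alpha$ with $d(x,y)=\max(d_\alpha,\dots)$-type control coming from $d$ being a genuine pseudometric on the disjoint sum, so $\Phi^{-1}(U(d))$ contains a box of the $U(d_\alpha)$'s; here one uses that only finitely many coordinates of any element of $\sigma\square_\alpha B(X_\alpha)$ are nonzero, so a single $U(d)$ is hit by finitely many summands at a time. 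For the reverse direction: the identity map $\bigoplus_\alpha X_\alpha\to \sigma\square_\alpha B(X_\alpha)$ (sending $X_\alpha$ into its coordinate) is continuous since each $X_\alpha$ is embedded in $B(X_\alpha)$ and the $X_\alpha$ are clopen in the disjoint sum, so it extends to a continuous homomorphism $\Psi\colon B(\bigoplus_\alpha X_\alpha)\to \sigma\square_\alpha B(X_\alpha)$; as $\Psi$ and $\Phi$ are mutually inverse on the common underlying abstract group, both are topological isomorphisms.

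For the linear versions one runs the same argument but with description~II specialized to disjoint open covers (the displayed formula for $\langle U(\gamma)\rangle$ in the subsection on the topology of free linear groups): a disjoint open cover of $\bigoplus_\alpha X_\alpha$ is, on each clopen piece $X_\alpha$, a disjoint open cover $\gamma_\alpha$, and $\langle U(\gamma)\rangle$ is then the box-product of the $\langle U(\gamma_\alpha)\rangle$ intersected with the $\sigma$-product; conversely a box of basic subgroups of the $B^{\mathrm{lin}}(X_\alpha)$ is generated by a disjoint cover of the sum. One should also note that $B^{\mathrm{lin}}(\bigoplus_\alpha X_\alpha)$ is defined because $\bigoplus_\alpha X_\alpha$ is zero-dimensional when all $X_\alpha$ are (Theorem~\ref{free-linear}).

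The main obstacle I anticipate is the continuity of $\Phi$ at $0$ for \emph{infinite} index sets $A$: one must show that a single basic neighborhood $U(d)$ (or $\langle U(\gamma)\rangle$) of $B(\bigoplus_\alpha X_\alpha)$ pulls back to a neighborhood in the box topology. The key combinatorial fact that makes this work is that every element of the $\sigma$-product, and every word in $B(\bigoplus_\alpha X_\alpha)$ of bounded length, involves only finitely many of the $X_\alpha$; so one reduces to the finite case, where $U(d)\supseteq U(d_{\alpha_1})+\dots+U(d_{\alpha_k})$ by the triangle inequality for $d$ on the disjoint sum (points from distinct $X_{\alpha_i}$ are at $d$-distance $\ge$ some positive constant built into $d$, but this constant plays no role since we only add contributions within each $X_{\alpha_i}$). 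Making the bookkeeping of "finitely many coordinates at a time" precise—rather than just quoting Tkachuk—is the one place the proof needs genuine care; everything else is formal manipulation of the universality properties and the explicit descriptions~II and~III already recorded above.
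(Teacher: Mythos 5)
Your overall architecture---identify the abstract groups, then produce mutually inverse homomorphisms $\Psi$ (continuous by the universal property of $B(\bigoplus_\alpha X_\alpha)$) and $\Phi$ (continuous by an explicit check against descriptions~II and~III)---is in substance the paper's own argument, which verifies the universal property of $\sigma\square_{\alpha}T(X_\alpha)$ directly. Everything you call formal is indeed fine, and the linear halves of the statement genuinely go through as you describe, for a reason worth making explicit: there the basic neighborhoods $\langle U(\gamma_\alpha)\rangle$ are \emph{subgroups}, so a sum of arbitrarily many elements of them stays inside them, and no bookkeeping of the number of nonzero coordinates is needed at all.

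The gap is exactly where you flag it, and your proposed resolution fails. The inclusion $U(d)\supseteq U(d_{\alpha_1})+\dots+U(d_{\alpha_k})$ is false: each summand only promises $d$-cost less than $1$, so the sum is contained in $k\cdot U(d)$ and not in $U(d)$, and ``reducing to the finite case'' does not help because $k$ is unbounded over the $\sigma$-product. For a countable index set this is repaired by scaling: enumerate $A=\{\alpha_0,\alpha_1,\dots\}$ and take $V_{\alpha_n}=U(2^{n+1}d_{\alpha_n})$, so that every finite sum admits a representation of total $d$-cost less than $\sum_n 2^{-(n+1)}\le 1$. For uncountably many nondiscrete summands no box neighborhood works, because you would need uncountably many positive budgets all of whose finite sums stay below $1$; in fact the nonlinear half of the statement then fails. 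Concretely, let $A$ be uncountable, let each $X_\alpha$ be a convergent sequence $S$ with metric $d_0$, and let $d$ equal $d_0$ inside each copy and $1$ across copies. Given any box neighborhood $\sigma\square_\alpha V_\alpha$ of zero, each $V_\alpha$ contains some $x_\alpha+y_\alpha\ne 0$ with $d_0(x_\alpha,y_\alpha)>\delta_\alpha/2$, where $\delta_\alpha=\sup\{d_0(x,y)\colon x+y\in V_\alpha,\ x,y\in S\}>0$; choosing a finite $F$ with $\sum_{\alpha\in F}\delta_\alpha/2\ge 1$, the word $g=\sum_{\alpha\in F}(x_\alpha+y_\alpha)$ lies in the image of the box neighborhood but satisfies $\|g\|_d\ge 1$ (any pairing of its letters either matches $x_\alpha$ with $y_\alpha$ throughout, at cost greater than $1$, or uses a cross-copy pair, at cost at least $1$), so $g\notin U(d)$. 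You should not feel singled out: the paper's proof stumbles at the very same point, asserting that $\sigma\square_\alpha V_\alpha$ is the preimage of $U$ under $\hat f$, which ignores that $\hat f$ adds up finitely many but unboundedly many components, each of which only lands in $U$. A corrected version of your argument proves the $A$ and $B$ isomorphisms for families with at most countably many nondiscrete members (which covers every use made of the theorem later in the paper) and the $A^{\mathrm{lin}}$ and $B^{\mathrm{lin}}$ isomorphisms for arbitrary families.
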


\begin{proof}
Let $T$ stand for $A$, $B$, $A^{\mathrm{lim}}$, or $B^{\mathrm{lim}}$, and let 
$0_\alpha$ denote the zero element of $T(X_\alpha)$.
For each $\alpha\in A$, we set $X'_\alpha = \sigma\square_{\beta\in A} Y_\beta$, where 
$Y_\alpha = X_\alpha$ and $Y_\beta=\{0_\beta\}$ for $\beta\ne\alpha$. Every $X'_\alpha$ is 
embedded in the group $T'_\alpha(X_\alpha)$ defined accordingly as 
a product of $T(X_\alpha)$ and zeros.  Clearly, the union 
$\bigcup_{\alpha\in A} X'_{\alpha}$ algebraically generates 
$\sigma\square_{\alpha\in A} T(X_\alpha)$ and 
is homeomorphic to $\bigoplus_{\alpha \in A}X_\alpha$. It remains to show that the 
homomophic extension of any 
continuous map of this union to any topological group from the corresonding variety 
is continuous. Let $f\colon \bigcup_{\alpha\in A} X'_{\alpha} \to G$ be such a map. 
For each $\alpha\in A$, the homomorphic extension $\hat f_\alpha\colon T'_\alpha(X_\alpha)\to 
G$ of the restriction of $f$ to $X'_\alpha$ is continuous. We define 
$\hat f\colon \sigma\square_{\alpha\in A} T(X_\alpha) \to G$ by setting 
$\hat f\bigl((g_\alpha)_{\alpha\in A}\bigr) = \sum_{\alpha \in A} \hat f_\alpha (g_\alpha)$ 
for each $(g_\alpha)_{\alpha\in A}\in \sigma\square_{\alpha\in A} T(X_\alpha)$; the sum is defined, 
because any element of $\sigma\square_{\alpha\in A} T(X_\alpha)$ has only 
finitely many nonzero components. 
Let us show that $\hat f$ is continuous. It suffices to 
check continuity at the zero element of $\sigma\square_{\alpha\in A} T(X_\alpha)$. 
Take any neighborhood $U$ of zero in $G$. Its preimages $V_\alpha$ under the component maps 
$\hat f_\alpha$ are open neighborhoods of zero in $T'_\alpha(X_\alpha)$. The product 
$\sigma\square_\alpha V_\alpha$ is the preimage of $U$ under $\hat f$, and it is open in the box 
topology. 
\end{proof}

The free Boolean topological group of a nondiscrete space is never metrizable (as well as the 
free and free Abelian topological groups). Indeed, if 
$B(X)$ is metrizable and $X$ is nondiscrete, then $X$ contains a convergent sequence 
$S$ with limit point $*$, and $B(S) = B(S|X)$ 
(see Theorem~\ref{subspace}); thus, it suffices to show that $B(S)$ is nonmetrizable. Suppose that 
it is metrizable. Then the topology of $B(S)$ is generated by a continuous norm 
$\|\cdot\|$. For all pairs of positive integers $n$ and $m\le n$ choose  
different $s_{n_m}\in S$ so that 
$\|s_{n_m}+*\|< \frac{1}{n^2}$. Clearly, the set 
$$
D = \{(s_{n_1}+*) + (s_{n_2}+*) + \dots (s_{n_n}+*)\colon n \ge 0\}
$$
has finite intersection with each $B_k(S)$; hence it must be discrete, because $B(S)$ has the 
inductive limit topology. On the other hand, $D$ is a sequence convergent to zero, since 
$$
\|(s_{n_1}+*) + (s_{n_2}+*) + \dots (s_{n_n}+*)\|\le \sum_{i=1}^n (s_{n_i}+*) < 
n\cdot \frac{1}{n^2} = \frac 1n.
$$

The list of properties shared by free, free Abelian, and free Boolean topological groups that can 
be proved without much effort is very long. Many of these properties are proved for Boolean groups 
by analogy, but sometimes their proofs are drastically simplified. We conclude our brief excursion 
by one of such examples. The proof of the following theorem for free topological groups given 
in~\cite{Sipa-subspaces} is extremely complicated (it is based on a more general construction). The 
proof given in~\cite{VINITI} is much shorter but still very cumbersome. In the Boolean case, the 
proof becomes almost trivial.

\begin{theorem}
If $\dim X = 0$, then $\operatorname{ind} B(X) = 0$.
\end{theorem}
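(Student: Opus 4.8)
The plan is to show that $B(X)$ has a base of neighborhoods of zero consisting of clopen (in fact open-and-closed) sets, which together with $\operatorname{ind} X = 0$ (so $\dim X = 0$ gives us strong zero-dimensionality and hence a wealth of disjoint open covers) will give $\operatorname{ind} B(X) = 0$. Since $B(X)$ is a topological group, it suffices to produce a base at zero of open sets each of which is also closed; translates then give a base of clopen sets at every point.

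First I would invoke description~II of the topology of $B(X)$: a base at zero is given by the sets $U(\Gamma) = \bigcup_{n} (U(\gamma_1) + \dots + U(\gamma_n))$ where each $\gamma_n$ is an open cover of $X$. Because $\dim X = 0$, every open cover of $X$ has a disjoint open (clopen) refinement, so we may assume each $\gamma_n$ is a partition of $X$ into clopen sets. I would then argue that for such disjoint clopen covers the set $U(\Gamma)$ is already a subgroup — or at least contains the subgroup $\langle U(\gamma_1)\rangle$, and more to the point, following the linear-group discussion just before Theorem~\ref{prop-linear}, for a disjoint clopen cover $\gamma$ the set $\langle U(\gamma)\rangle$ consists exactly of the reduced words $\sum_{i=1}^n (x_i + y_i)$ with each pair $(x_i,y_i)$ lying in a common member of $\gamma$. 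The key observation is that such a subgroup $\langle U(\gamma)\rangle$ is \emph{open} (it contains the basic neighborhood $U(\gamma)$, being a subgroup generated by it) and, being an open subgroup of a topological group, automatically \emph{closed}. Thus the open subgroups $\langle U(\gamma)\rangle$, for $\gamma$ ranging over disjoint clopen covers of $X$, are clopen.

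The remaining point is that these clopen open subgroups actually form a base at zero, i.e. that inside every basic $U(\Gamma)$ one can fit some $\langle U(\gamma)\rangle$. For this I would note that given $\Gamma = \{\gamma_n\}$ with all $\gamma_n$ disjoint clopen, the common refinement $\gamma = \gamma_1 \wedge \gamma_2 \wedge \cdots$ is not available (infinite refinements need not be covers in general), but one does not need it: it is enough that $\langle U(\gamma_1)\rangle \subset U(\Gamma)$ after passing, if necessary, to a suitably chosen single cover, using that each element of $\langle U(\gamma_1)\rangle$ lies in some finite sum $U(\gamma_1)+\dots+U(\gamma_1) \subset U(\gamma_1)+\dots+U(\gamma_n) = $ part of $U(\Gamma)$ — here one uses $U(\gamma_1) \supset U(\gamma_n)$ is false in general, so more carefully I would choose the $\gamma_n$ in $\Gamma$ to be a \emph{decreasing} sequence of disjoint clopen covers (each refining the previous), which is harmless since such sequences are cofinal among all sequences of covers; then $\langle U(\gamma_1)\rangle \subset \bigcup_n n\cdot U(\gamma_1) \subset \bigcup_n (U(\gamma_1)+\dots+U(\gamma_n)) = U(\Gamma)$ because $U(\gamma_n) \subset U(\gamma_1)$. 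Wait — that inclusion goes the wrong way for a \emph{decreasing} sequence; I should instead take the $\gamma_n$ all equal, or observe directly that $\langle U(\gamma)\rangle = \bigcup_n n\cdot U(\gamma) = U(\{\gamma,\gamma,\dots\})$, which is exactly a basic set of the form $U(\Gamma)$. So the honest statement is: the basic sets $U(\Gamma)$ with $\Gamma$ a \emph{constant} sequence at a disjoint clopen cover are themselves the clopen subgroups $\langle U(\gamma)\rangle$, and these are cofinal (refine any given $U(\Gamma)$ by taking $\gamma$ to refine $\gamma_1 \wedge \dots \wedge \gamma_k$ for suitable $k$ — but since we cannot bound the number of summands, we instead use that $B^{\mathrm{lin}}(X)$ has strictly weaker topology and appeal to Theorem~\ref{prop-linear} level by level).

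The main obstacle is precisely this cofinality/comparison issue — showing the clopen subgroups $\langle U(\gamma)\rangle$ are fine enough to form a base in $B(X)$ (not merely in $B^{\mathrm{lin}}(X)$). The cleanest route around it, which I would ultimately adopt, is not to compare neighborhoods directly but to argue pointwise: given $g \in B(X)$ and a neighborhood $O$ of $g$, translate to get a neighborhood $U$ of zero with $g + U \subset O$; shrink $U$ to a basic $U(\Gamma)$; now it suffices to find a clopen set $C$ with $0 \in C \subset U(\Gamma)$, and for that take the single disjoint clopen cover $\gamma$ refining $\gamma_1$ fine enough that $U(\gamma) + \dots + U(\gamma)$ ($k$ times) stays inside $U(\gamma_1) + \dots + U(\gamma_k)$ for each $k$ simultaneously — achievable because a word of length $\le 2k$ in $U(\gamma)$ involves only finitely many members of $\gamma$, so a diagonal/refinement argument on $X$ strongly zero-dimensional produces the needed $\gamma$. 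Then $C = \langle U(\gamma)\rangle$ is the desired clopen neighborhood of $0$, $g + C$ is a clopen neighborhood of $g$ inside $O$, and $\operatorname{ind} B(X) = 0$ follows. I would present this last argument carefully, as it is the only non-formal step; everything else (open subgroups are closed, $\dim X = 0$ gives disjoint clopen refinements, description~II) is quoted from the material above.
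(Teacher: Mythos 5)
There is a genuine gap, and it sits exactly at the step you yourself flag as the main obstacle. Your plan reduces to finding, inside every basic neighborhood $U(\Gamma)$ of zero, an open subgroup $\langle U(\gamma)\rangle$ for some disjoint clopen cover $\gamma$. If that were possible, the open subgroups would form a base at zero, i.e., the free group topology of $B(X)$ would be linear and would therefore coincide with that of $B^{\mathrm{lin}}(X)$ for every strongly zero-dimensional $X$. This is false already for $X$ a convergent sequence, i.e., $\omega_{\mathscr F}$ with $\mathscr F$ the Fr\'echet filter: take the metric $d$ with $d(n,*)=\frac{1}{n+1}$ and the basic neighborhood $U(d)=\{x_1+\dots+x_k\colon \sum_i d(x_i,*)<1\}$ of zero from description~III. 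Any open subgroup $H$ of $B(\omega_{\mathscr F})$ meets $\omega_{\mathscr F}$ in a neighborhood of $*=0$, hence contains a cofinite set $A\subset\omega$, hence contains the sum of every finite subset of $A$; since $\sum_{n\in A}\frac{1}{n+1}$ diverges, such sums eventually leave $U(d)$, so $H\not\subset U(d)$. Consequently no ``diagonal refinement'' can produce a single disjoint clopen cover $\gamma$ with $k\cdot U(\gamma)\subset U(\gamma_1)+\dots+U(\gamma_k)$ for all $k$ simultaneously: the constraint tightens without bound as $k$ grows, and Theorem~\ref{prop-linear}, which compares the two topologies only on each $B_n(X)$ separately, yields no global inclusion of neighborhoods. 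What your argument does prove is $\operatorname{ind}B^{\mathrm{lin}}(X)=0$, which is weaker than the statement about $B(X)$.

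The fix is to abandon the requirement that the clopen neighborhood of zero be a subgroup, which is what the paper does. Given a continuous pseudometric $d$ on $X$, one majorizes it (using $\dim X=0$) by a non-Archimedean pseudometric $\rho$ taking only values of the form $\frac{1}{2^n}$. The Graev norm $\|\cdot\|_\rho$ then takes only rational values, because its infimum is attained at a reduced representation and is a finite sum of values of $\rho$ (or equals $1$). Hence for irrational $r<1$ the open ball $\{g\colon\|g\|_\rho<r\}$ coincides with the closed ball $\{g\colon\|g\|_\rho\le r\}$ and is clopen, and such balls lie inside $U(d)$ and form, together with their translates, the required base witnessing $\operatorname{ind}B(X)=0$. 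These clopen sets are emphatically not subgroups, which is why the argument survives the obstruction above.
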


\begin{proof}
Any continuous pseudometric $d$ on $X$ is majorized by a non-Archimedean 
pseudometric\footnote{A pseudometric $\rho$ is said to be \emph{non-Archimedean} if 
$\rho(x, z)\le \max\{\rho(x, y),\allowbreak \rho(y, z)\}$ for any $x, y, z\in X$.}
$\rho$ 
taking only values of the form $\frac1{2^n}$. To see this, it 
suffices to consider the elements $V_0, V_1, \dots$ of the universal uniformity on $X$ which 
are determined by decreasing disjoint open refinements $\gamma_0, \gamma_1, \dots$ 
of the covers of $X$ by balls of radii $\frac1{2^1}, \frac1{2^2},\dots$ with respect to $d$ and 
apply the construction in the proof of Theorem~8.1.10 of~\cite{Engelking} 
(see also~\cite{de_Groot}). Since the covers 
$\gamma_n$ determining the entourages $V_n$ are disjoint and each $\gamma _{i+1}$ is a refinement 
of $\gamma_i$, it follows that the function $f$ in this construction has the property 
$f(x, z)\le \max \{f(x, y), f(y, z)\}$ and, therefore, the pseudometric $\rho$ constructed there 
from $f$ is non-Archimedean and takes the values $\frac1{2^n}$. Clearly, it majorizes $d$.

Each value $\|g\|_\rho$, $g\in B(X)$, of the Graev extension $\|\cdot\|_\rho$ of $\rho$ 
is either 1 or a finite sum of values of $d$ (recall that the 
minimum in the expression for $\|g\|_\rho$ is attained at the irreducible representation of $g$). 
Hence $\|\cdot\|_\rho$ takes only rational values, and the balls with irrational radii 
centered at zero in this norm are open and closed. They form a base of neighborhoods of zero, 
and their translates, a base of the entire topology on $B(X)$.
\end{proof}

\subsection*{Difference}
Pestov gave an example of a space $X$ for which $F(X)$ is not homeomorphic to 
$A(X)$~\cite{Pestov}. Spaces for which $A(X)$ is not homeomorphic to $B(X)$ exist, too. 

\begin{proposition}
The free Abelian topological group of any connected space has infinitely many 
connected components. The free Boolean topological group of any connected space 
has two connected components.
\end{proposition}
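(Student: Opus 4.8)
The plan is, in each case, to build a continuous homomorphism onto a discrete group by means of the universal property of the free group and then read off the components from its fibers.

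For $A(X)$: since $X$ is connected, I would take the constant map $f\colon X\to\mathbb Z$ with value $1$ into the discrete group $\mathbb Z$; it extends to a continuous homomorphism $\hat f\colon A(X)\to\mathbb Z$, namely the augmentation $\hat f\bigl(\sum_i\varepsilon_i x_i\bigr)=\sum_i\varepsilon_i$. As $\hat f(x_0)=1$ for any $x_0\in X$, the map $\hat f$ is surjective, so all fibers $\hat f^{-1}(n)$, $n\in\mathbb Z$, are nonempty; they are clopen, being preimages of points of a discrete space. A clopen set is a union of connected components, so the partition $A(X)=\bigcup_{n\in\mathbb Z}\hat f^{-1}(n)$ into countably many nonempty clopen pieces shows that $A(X)$ has infinitely many connected components.

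For $B(X)$: the analogue is the constant map $X\to\mathbb Z_2$ with value $1$, whose homomorphic extension $\hat f\colon B(X)\to\mathbb Z_2$ sends each word to its length modulo $2$, with kernel $B_{\mathrm{even}}(X)$; being the kernel of a continuous homomorphism onto a discrete group, $B_{\mathrm{even}}(X)$ is open and closed, so the identity component $C$ of $B(X)$ is contained in $B_{\mathrm{even}}(X)$. The crux is the reverse inclusion, i.e., the connectedness of $B_{\mathrm{even}}(X)$. I would fix $x_0\in X$ and consider the continuous map $\varphi\colon X\to B(X)$ given by $\varphi(x)=x+x_0$. Its image $\varphi(X)$ is connected and contains $\varphi(x_0)=0$, hence $\varphi(X)\subseteq C$. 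Now $C$ is a subgroup (the identity component of a topological group always is), and $\varphi(X)$ generates $B_{\mathrm{even}}(X)$: every even word $x_1+\dots+x_{2n}$ equals $(x_1+x_0)+\dots+(x_{2n}+x_0)$, since $2n\,x_0=0$ in a Boolean group, while every element of $\varphi(X)$ has length $0$ or $2$. Therefore $B_{\mathrm{even}}(X)=\langle\varphi(X)\rangle\subseteq C$, so $C=B_{\mathrm{even}}(X)$.

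It then follows that $B(X)$ has exactly $|B(X)/C|=|B(X)/B_{\mathrm{even}}(X)|=|\mathbb Z_2|=2$ connected components; concretely, the second component is $B(X)\setminus B_{\mathrm{even}}(X)=x_0+B_{\mathrm{even}}(X)$, a homeomorphic translate of the connected set $B_{\mathrm{even}}(X)$. The one step that needs an actual idea is the connectedness of $B_{\mathrm{even}}(X)$, for which the key observation is that it is the subgroup generated by the connected set $\varphi(X)$ containing $0$; everything else is a routine use of the universal property and of elementary facts about topological groups. (Throughout, "connected space" is understood to be nonempty, so that $x_0$ exists.)
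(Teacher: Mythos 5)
Your proof is correct, and it takes a somewhat different route from the paper's. For $A(X)$ you use only the augmentation homomorphism onto discrete $\mathbb Z$ and observe that its fibers are infinitely many nonempty clopen sets, each a union of components; the paper instead cites the known description (Lemma~7.10.2 of Arhangel'skii--Tkachenko) of the component of zero as the subgroup $A^{\mathrm c}(X)$ of words with zero exponent sum, with quotient $\mathbb Z$. For $B(X)$ the crux in both arguments is the connectedness of the even-length subgroup: the paper gets it for free as the continuous image of the (already known to be connected) subgroup $A^{\mathrm c}(X)$ under the canonical homomorphism $A(X)\to B(X)$, whereas you prove it from scratch by noting that $B_{\mathrm{even}}(X)$ is generated by the connected set $\varphi(X)=\{x+x_0\colon x\in X\}$ containing $0$, which must lie in the identity component $C$, a subgroup. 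Your version is self-contained and avoids the external lemma about $A(X)$; the paper's version is shorter given that lemma and illustrates the general principle of transferring structure from $A(X)$ to $B(X)$ along the canonical quotient homomorphism. Both correctly conclude with the clopen index-two subgroup $B_{\mathrm{even}}(X)$ forcing exactly two components. (Note only that your $A(X)$ argument establishes ``infinitely many components'' without identifying them; that is all the statement requires.)
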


\begin{proof}
Consider a connected space $X$. The connected component of zero in $A(X)$ is the subgroup 
$A^{\mathrm c}(X)$ consisting of all words $\sum_{i=1}^n x_i^{\varepsilon_i}$ 
with $\sum_{i=1}^n \varepsilon_i = 1$ (see~\cite[Lemma~7.10.2]{book_Arh-Tkach}). 
Clearly, all words in this subgroup are of even length, 
and the  canonical homomorphism $A(X)\to B(X)$ takes $A^{\mathrm c}(X)$ to the 
subgroup $B^{\mathrm c}(X)$ of $B(X)$ consisting of all words of even length. Since 
the canonical homomorphism is continuous, the subgroup $B^{\mathrm c}(X)$ is connected, and it has 
index $2$ in $B(X)$. Thus, $B(X)$ has at most two (in fact, precisely two) connected components, 
while $A(X)$ has infinitely many connected components, because $A(X)/A^{\mathrm c}(X)\cong \mathbb 
Z$ (see \cite[Lemma~7.10.2]{book_Arh-Tkach}). 
\end{proof}

There is a fundamental difference in the very topological-algebraic nature of free, free Abelian, 
and free Boolean groups. Thus, nontrivial free and free Abelian groups admit 
no compact group topologies 
(see \cite{Tkachenko-pseudocompact}); this follows from the well-known algebraic description of 
infinite compact Abelian groups \cite[Theorem~25.25]{H-R}. On the other hand, for any infinite 
cardinal $\kappa$, the direct sum $\bigoplus_{2^\kappa}\mathbb Z_2$ of $2^\kappa$ copies of 
$\mathbb Z_2$ (that is, the free Boolean group of rank 
$2^\kappa$) is algebraically  isomorphic to the Cartesian product $(\mathbb Z_2)^\kappa$ 
\cite[Lemma~4.5]{H-R1} and, therefore, admits compact group topologies 
(e.g., the product topology). 

The free and free Abelian groups are never finite, while the free Boolean group of any finite set 
is finite.

The free and the free Abelian topological group of any completely regular Hausdorff topological 
space $X$ contain all finite powers $X^n$ of $X$ as closed subspaces. Thus, each $X^n$ is 
homeomorphic to the closed subset $\{x_1\dots x_n\colon x_i\in X \text{ for }i=1\le n\}$ of $F(X)$ 
(see~\cite{Arh1968}\footnote{Arkhangel'skii announced this result in~\cite{Arh1968} and proved 
it in~\cite{Arkhangel'skii1969} by considering the Stone--\v Cech compactification 
$\beta X$ of $X$ and its 
free topological group; details can be found in \cite[Theorem~7.1.13]{book_Arh-Tkach}. 
Unfortunately, the book~\cite{Arkhangel'skii1969}, which is a rotaprint edition of a lecture course, is (and 
always was) virtually unavailable, even in Russia. Thus, the result was rediscovered by 
Joiner~\cite{Joiner} and the idea of proof, by Morris~\cite{Morris2} (see 
also~\cite{HMT}). In fact, both Arkhangel'skii and Joiner proved a stronger statement; namely, they 
gave the same complete description of the topological structure of all $F_n(X)$, although obtained 
by different methods (Arkhangel'skii proof is much shorter).}) and to the closed subset 
$\{x_1+2x_2 + \dots + nx_n\colon x_i \in X \text{ for }i=1\le n\}$ of $A(X)$~\cite{Thomas}. 
However, the situation with free Boolean topological groups is much more complicated. For example, 
consider extremally disconnected free topological groups. 

Extremally disconnected groups are discussed in the next section. Here we only mention that 
nondiscrete $F(X)$ and $A(X)$ are never extremally disconnected, while  $B(X)$ may be 
nondiscrete and extremally disconnected under certain set-theoretical assumptions 
(e.g., under CH) even for countable $X$ of the form $\omega_{\mathscr F}$, and that any  
hereditarily normal, in particular, countable, extremally disconnected  space is hereditarily 
extremally disconnected (this is shown in the next section). It follows that if $X$ is a 
nondiscrete countable space for which $B(X)$ is extremally disconnected, then $B(X)$ does not 
contain $X^2$ as a subspace. Indeed, otherwise, $X^2$ is extremally disconnected (and nondiscrete), 
and the existence of such spaces is prohibited by the following simple observation; it must be 
known, although the author failed to find a reference. 

\begin{proposition}
If $X\times X$ is extremally disconnected, then $X$ is discrete.
\end{proposition}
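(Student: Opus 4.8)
The plan is to argue by contradiction: suppose $X\times X$ is extremally disconnected but $X$ is not discrete, and derive a contradiction. First I would recall that an extremally disconnected space is one in which the closure of every open set is open; this property passes to open subspaces, so I am free to work locally near a non-isolated point. Pick a non-isolated point $x_0\in X$. The key structural fact I want to exploit is that in $X\times X$ the two ``coordinate copies'' of a neighborhood of $x_0$ meet the diagonal only at $(x_0,x_0)$, yet both accumulate at that point, and one can peel them apart by an open set whose closure is forced to be open.

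The main step is the following: choose an open neighborhood $U$ of $x_0$ in $X$ with $U\neq\{x_0\}$, and consider the open sets $P=(U\setminus\{x_0\})\times\{x_0\}$ — wait, $\{x_0\}$ need not be open, so instead I would work with genuinely open pieces. A cleaner route: since $x_0$ is non-isolated, $U\setminus\{x_0\}$ is a nonempty open subset of $X$ that has $x_0$ in its closure. Form in $X\times X$ the open set
\[
W=\bigl((U\setminus\{x_0\})\times U\bigr)\ \cap\ \bigl(U\times (U\setminus\{x_0\})\bigr)^{c}\!\!,
\]
or more simply consider the two disjoint open sets $A=(U\setminus\{x_0\})\times\{x_0\}$-type objects replaced by honestly open rectangles: let $O=U\times U\subset X\times X$, which is extremally disconnected, and inside $O$ take $G_1$ to be the ``strictly-below-diagonal'' region and $G_2$ the ``strictly-above-diagonal'' region relative to some... — but $X$ carries no order, so the honest argument is: the diagonal $\Delta=\{(x,x):x\in U\}$ is closed in $O$, and $O\setminus\Delta$ is open; by extremal disconnectedness $\overline{O\setminus\Delta}$ is open, hence $\Delta$ has empty interior or $\Delta$ is clopen. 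I would then show $\Delta$ is not open (else $x_0$ would be isolated, since $\Delta\cong U$), and that $\overline{O\setminus\Delta}=O$, so I need a point of $\Delta$ that is \emph{not} in $\overline{O\setminus\Delta}$ to get the contradiction, i.e. I need $(x_0,x_0)$ to fail to be a limit of $O\setminus\Delta$; but it \emph{is} such a limit because $x_0$ is non-isolated. So instead the contradiction must come from separating $O\setminus\Delta$ into two open pieces that cannot be pulled apart.

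The hard part — and the crux of the argument — is producing this separation. I would do it as follows: fix two disjoint nonempty open sets $V_1,V_2\subset U\setminus\{x_0\}$ is impossible in general, so instead use a single open $V=U\setminus\{x_0\}$ and the two open subsets of $O$ given by $H_1 = V\times U$ and $H_2 = U\times V$. Their intersection $V\times V$ and their symmetric difference carry the information; specifically consider $S=(V\times\{x_0\})$... Rather than force an order-free miracle, the right move is the standard one: $(x_0,x_0)$ lies in the closure of $N=\{(u,x_0):u\in V\}\cup\{(x_0,u):u\in V\}$, but by extremal disconnectedness applied to the ambient $O$ one shows $\overline{V\times\{x_0\}}$ and $\overline{\{x_0\}\times V}$ are open (after replacing these non-open sets by the open sets $V\times U$ etc. and intersecting closures), and these two closures meet exactly at $(x_0,x_0)$, contradicting that a one-point intersection of two open sets containing a non-isolated-type point is impossible — more precisely, $\{(x_0,x_0)\}$ would be open in $O$, forcing $x_0$ isolated in $X$, the desired contradiction. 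I expect the real work to be in choosing the open rectangles so that the two resulting regular-open closures intersect in precisely the singleton $\{(x_0,x_0)\}$; once that is arranged, extremal disconnectedness makes that singleton clopen and hence $x_0$ isolated, completing the proof.
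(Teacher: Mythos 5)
Your proposal does not close. You correctly identify the target --- to contradict extremal disconnectedness you need either two disjoint open subsets of $X\times X$ whose closures both contain $(x_0,x_0)$, or an argument that $\{(x_0,x_0)\}$ is open --- but none of the three routes you sketch actually produces this, and you say as much yourself (``I expect the real work to be in choosing the open rectangles so that the two resulting regular-open closures intersect in precisely the singleton''). That expected ``real work'' is the entire proof. In particular, the last concrete suggestion fails outright: the open rectangles $V\times U$ and $U\times V$ (with $V=U\setminus\{x_0\}$) are not disjoint --- both contain the nonempty open set $V\times V$ --- so their closures meet in far more than $\{(x_0,x_0)\}$, and extremal disconnectedness says nothing about closures of non-disjoint open sets. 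The first route (looking at $\overline{O\setminus\Delta}$) you correctly abandon because $(x_0,x_0)$ genuinely lies in that closure, and the second (splitting $O\setminus\Delta$ into ``above'' and ``below'' the diagonal) requires an order that $X$ does not carry.

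The missing ingredient is Frol\'\i k's fixed-point theorem: the fixed-point set of a (surjective) self-homeomorphism of an extremally disconnected space is clopen. The paper applies this to the coordinate swap $(x,y)\mapsto(y,x)$ on $X\times X$, whose fixed-point set is the diagonal $\Delta$; openness of $\Delta$ forces every $\{x\}$ to be open, since a basic open box $U\times V\subset\Delta$ can only be a singleton. If you want to avoid citing Frol\'\i k, the involution case can be done by hand: take, by Zorn's lemma, a maximal open $W\subset X\times X$ with $W\cap\sigma(W)=\emptyset$ (where $\sigma$ is the swap); disjoint open sets have disjoint closures, so maximality makes $W$ clopen, and one checks that any non-fixed point outside $W\cup\sigma(W)$ could be used to enlarge $W$, so the complement of the clopen set $W\cup\sigma(W)$ is exactly $\Delta$. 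Some such global combinatorial step is unavoidable; purely local manipulation of rectangles around $(x_0,x_0)$, which is all your proposal attempts, cannot manufacture the needed pair of disjoint open sets.
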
 

This immediately follows from Frol\'\i k's general theorem that the fixed point set 
of any surjective self-homeomorphism of an extremally disconnected 
space is clopen\footnote{Frol\'\i k proved this theorem for 
compact extremally disconnected spaces 
and not necessarily surjective self-homeomorphisms; in the surjective case, the theorem is extended 
to noncompact spaces by considering their Stone--\v Cech compactifications, which are 
always extremally disconnected for extremally disconnected spaces (this and other fundamental 
properties of extremally disconnected spaces can be found in the book~\cite{G-J}).}~\cite{Frolik}: 
it suffices to consider the self-homeomorphism of $X\times X$ defined by $(x, y)\mapsto (y,x)$.

\medskip

Thus, there exist (under CH) filters $\mathscr F$ on $\omega$ for which $(\omega_{\mathscr F})^2$ 
is not contained in $B(\omega_{\mathscr F})$ as a subspace. However, in the simplest case where 
$\mathscr F$ is the Fr\'echet filter (i.e., $\omega_{\mathscr F}$ is a convergent sequence), 
not merely does $B(\omega_{\mathscr F})$ contain $(\omega_{\mathscr F})^n$ but it is 
topologically isomorphic to $B(\omega_{\mathscr F})^n$ for all $n$ by virtue of 
Theorem~\ref{oplus} 
and the fact that a convergent sequence 
is $B$-equivalent to the disjoint union of two convergent 
sequences, which can be demonstrated as follows.

Any $M$-equivalent spaces are $A$-equivalent, and any $A$-equivalent spaces are $B$-equivalent,  
because $A(X)$ ($B(X)$) is the quotient of $F(X)$ ($A(X)$) by an algebraically determined subgroup 
not depending on $X$. Therefore, all known sufficient conditions for $M$- and $A$-equivalence 
(see, e.g.,~\cite{Graev1948, Graev1950, Okunev, Arh1980, Tkachuk} remain 
valid for $B$-equivalence. In particular, if $X_0$ is a space, $K$ is a retract of $X_0$, 
$X$ is the space obtained by adding an isolated point to $X_0$, and $Y= X_0/K\oplus K$, then 
$X$ and $Y$ are $M$-equivalent~\cite[Theorem~2.4]{Okunev}. This immediately 
implies the required $B$-equivalence of a convergent sequence $S$ and the disjoint union 
$S\oplus S$ of two convergent sequences: 
it suffices to take $S\oplus S$ for $X_0$ and $X$ and the two-point set of the two
limit points in $S\oplus S$ for~$K$. 

However, there exist $B$-equivalent spaces  
which are neither $F$- nor $A$-equivalent. Genze, Gul'ko, and Khmyleva obtained necessary 
and sufficient conditions for infinite initial segments of ordinals to be $F$-, $A$-, and 
$B$-equivalent~\cite{Genze-et-al} (see also~\cite{Genze-et-al2}). It turned out that the criteria 
for $F$- and $A$-equivalence are the same, and the criterion for $B$-equivalence differs from them; 
see~\cite{Genze-et-al} for details. 

Finally, the following theorem shows that there is also a fundamental difference 
between free groups of the varieties of Abelian and 
Boolean linear topological groups. 

\begin{theorem}
The free Boolean linear topological 
group of any strongly zero-dimensional pseudocompact space is precompact.
\end{theorem}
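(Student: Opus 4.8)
The plan is to use the explicit description of the free Boolean linear group topology obtained from description~II: a base of neighborhoods of zero in $B^{\mathrm{lin}}(X)$ is given by the subgroups $\langle U(\gamma)\rangle$, where $\gamma$ ranges over the disjoint open covers of the strongly zero-dimensional space $X$ (by $\dim X = 0$, every normal cover has a disjoint open refinement, and in a pseudocompact space every open cover is normal — indeed finite subcovers exist, so we may even take $\gamma$ finite). To show $B^{\mathrm{lin}}(X)$ is precompact, I must show that for each such $\gamma$ the quotient $B^{\mathrm{lin}}(X)/\langle U(\gamma)\rangle$ is finite, equivalently that $B^{\mathrm{lin}}(X)$ is covered by finitely many translates of $\langle U(\gamma)\rangle$.

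First I would reduce to a finite disjoint clopen cover $\gamma = \{U_1, \dots, U_m\}$ of $X$, using pseudocompactness. Then I would observe that modulo the subgroup $\langle U(\gamma)\rangle$, any two points lying in the same piece $U_i$ become identified: if $x, y \in U_i$ then $x + y \in U(\gamma) \subset \langle U(\gamma)\rangle$, so $x \equiv y$. Hence every generator $x \in X$ is congruent mod $\langle U(\gamma)\rangle$ to a fixed representative $x_i \in U_i$ of its piece, and so every element of $B(X)$ (a finite symmetric difference of points of $X$) is congruent to a finite symmetric difference of the finitely many representatives $x_1, \dots, x_m$. Therefore $B^{\mathrm{lin}}(X)/\langle U(\gamma)\rangle$ is generated by the $m$ elements $x_1 + \langle U(\gamma)\rangle, \dots, x_m + \langle U(\gamma)\rangle$, hence is a Boolean group with at most $2^m$ elements, i.e. finite. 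This gives total boundedness of $B^{\mathrm{lin}}(X)$ with respect to the group uniformity, which is exactly precompactness.

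The one point that needs care — and which is the crux — is the passage from "$\dim X = 0$ and $X$ pseudocompact" to "every open cover has a \emph{finite} disjoint clopen refinement." Pseudocompactness of a Tychonoff space gives a finite subcover of any open cover; $\dim X = 0$ lets us refine any finite open cover to a finite disjoint clopen cover by the standard disjointification $V_1 = U_1$, $V_k = U_k \setminus (U_1 \cup \dots \cup U_{k-1})$ once the $U_i$ are clopen, and clopen shrinkings of a finite open cover exist when $\dim X = 0$. I expect this to be routine but it is where the two hypotheses are genuinely used; everything after that is the elementary computation above. (Alternatively, one can phrase it via description~II directly: given any $\Gamma = \{\gamma_n\}$, use a single finite disjoint clopen $\gamma \le \gamma_1$ and note $\langle U(\gamma)\rangle \subset U(\Gamma)$, so it suffices to cover $B(X)$ by finitely many translates of $\langle U(\gamma)\rangle$, which is what was shown.)
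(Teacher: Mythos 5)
Your algebraic core is exactly the paper's argument: for a finite disjoint clopen cover $\gamma=\{U_1,\dots,U_m\}$, the subgroup $\langle U(\gamma)\rangle$ identifies any two generators lying in the same piece, so the quotient is a Boolean group on at most $m$ generators and hence has order at most $2^m$; since these subgroups form a base at zero, the group is precompact. That part is correct and is precisely why the paper says ``every such subgroup has finite index.''

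The step you yourself single out as the crux, however, is justified by a false statement. Pseudocompactness of a Tychonoff space does \emph{not} give a finite subcover of every open cover --- that is compactness; the countable ordinal space $\omega_1$ and the Mr\'owka $\Psi$-space are pseudocompact, strongly zero-dimensional, and not compact, and have open covers with no finite subcover (nor is every open cover of a pseudocompact space normal). The correct route, which is the one the paper's parenthetical ``(finite)'' encodes, is this: the base at zero of $B^{\mathrm{lin}}(X)$ is indexed by \emph{normal} covers $\gamma$; since $\dim X=0$, each normal cover has a disjoint open refinement $\gamma'$, and $\langle U(\gamma')\rangle\subset\langle U(\gamma)\rangle$; finally, a disjoint open cover of a pseudocompact space is automatically finite, because it is a discrete (in particular, locally finite) family of nonempty open sets, and pseudocompactness of a Tychonoff space is equivalent to every locally finite family of nonempty open sets being finite. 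With that substitution your proof closes; as written, the reduction to finite covers does not.
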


\begin{proof}
Let $X$ be a strongly zero-dimensional pseudocompact space. 
As mentioned in the preceding section, a base of neighborhoods of zero in $B^{\mathrm{lim}}(X)$ 
is formed by subgroups of the form
$$
\langle 
U(\gamma)\rangle = \Bigl\{\sum_{i=1}^n (x_i + y_i)\colon n\in \omega,\
(x_i, y_i) \in U_i\in \gamma \text{ for }i\le n\Bigr\},
$$
where $\gamma$ in a (finite) disjoint open cover of $X$. 
Clearly, 
$$
\langle 
U(\gamma)\rangle = \Bigl\{\sum_{i=1}^{2n} x_i\colon n\in \omega,\
|\{i\le 2n\colon x_i \in U\}| \text { is even for each }U\in \gamma\Bigr\}.
$$
Every such subgroup has finite index. Therefore, $B(X)$ is precompact.
\end{proof}

This theorem is not true for Abelian groups; moreover, free Abelian linear groups 
are never precompact. Indeed, the group 
$$
A^{\mathrm c}(X)=
\Bigl\{\sum_{i=1}^n x_i^{\varepsilon_i}\colon n\in \mathrm N,\ 
\sum_{i=1}^n \varepsilon_i = 1\Bigr\}
$$ 
considered above is always open, being the preimage of 
the isolated point~$0$ under the homomorphism $A(X)\to \mathbb Z_2 = \{0, 1\}$ 
which extends the constant map $X\to \{0, 1\}$ taking 
everything to $1$. As already mentioned, $A^{\mathrm c}(X)$ has infinite index in $A(X)$. 

\section{Extremally Disconnected Groups}

There is an old problem of Arkhangel'skii on the existence in ZFC of a nondiscrete 
Hausdorff extremally disconnected topological group; it was posed in 1967~\cite{Arh} 
and has been extensively studied since then. Various authors also posed the countable 
version of Arhangel'skii's problem  (see, e.g., \cite[Problem~6]{Ponomarev-Shapiro} 
and \cite[Question~6.1]{Comfort-vanMill}):  
Does there exist a ZFC example of a countable nondiscrete extremally disconnected topological 
group? The contable version of the problem has been solved in the negative 
in the quite recent joint paper~\cite{edgroups} by Reznichenko and the author; the uncountable 
case still persists. 

The first consistent example of a nondiscrete extremally disconnected group 
was constructed as early  as in 1969 by Sirota~\cite{Sirota};  more examples were 
constructed in~\cite{Louveau, Malykhin, Malykhin2, Zel1, Zel2}. We refer the reader interested 
in extremally disconnected groups and, in general, topological groups with extremal properties 
(maximal, nodec, and so on) to Zelenyuk's book~\cite{ProtQ}. 

A space $X$ is said to be \emph{extremally disconnected} 
if the closure of each open set in this space 
is open, or, equivalently, if any two disjoint open sets have disjoint closures. 
In particular, the space $X_{\mathscr F}$ associated with a filter $\mathscr F$ 
is extremally disconnected if and only 
if $\mathscr F$ is an ultrafilter. Clearly, all extremally disconnected spaces are 
zero-dimensional. 
The most fundamental properties of extremally disconnected spaces can be found 
in the book~\cite{G-J}. Much useful information (especially in the topological-algebraic context) 
is contained in~\cite{Arh-ed}. The central place 
in  the theory of extremally disconnected topological groups is occupied by Boolean topological 
groups because of the following beautiful theorem of Malykhin.

\begin{theorem}[Malykhin~\cite{Malykhin}]
Any extremally disconnected group contains an open (and therefore closed) Boolean subgroup.
\end{theorem}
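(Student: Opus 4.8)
The plan is to produce an \emph{open} subgroup $H\le G$ with $h^2=e$ for all $h\in H$; such an $H$ is Boolean, and an open subgroup of a topological group is automatically closed, so this yields the assertion. The one external input is the fact already used above (Frol\'\i k): the fixed-point set of a surjective self-homeomorphism of an extremally disconnected space is clopen. Applying it to the inversion $\iota\colon x\mapsto x^{-1}$, a self-homeomorphism of $G$ with $\iota\circ\iota=\mathrm{id}$ (hence surjective), shows that $B:=\{x\in G:x^2=e\}=\operatorname{Fix}\iota$ is a clopen neighbourhood of $e$; applying it to the conjugations $\kappa_a\colon x\mapsto axa^{-1}$ shows that every centralizer $C_G(a)=\operatorname{Fix}\kappa_a$ is clopen; and applying it to the involutions $x\mapsto ax^{-1}a$ shows that every translate $aB$ is clopen. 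Since a subgroup of $G$ is Boolean exactly when it lies in $B$, the goal becomes: find an open subgroup inside the clopen set $B$.

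It suffices to find a clopen neighbourhood $V$ of $e$ with $V\subseteq B$ and $VV\subseteq B$. Indeed, any two $a,b\in V$ then satisfy $a,b,ab\in B$, so $ab=(ab)^{-1}=b^{-1}a^{-1}=ba$; thus the elements of $V$ are pairwise commuting involutions, and $H:=\langle V\rangle=\bigcup_{n\ge1}V^{\,n}$ is an open --- hence clopen --- subgroup every element $v_1\cdots v_n$ of which satisfies $(v_1\cdots v_n)^2=v_1^2\cdots v_n^2=e$, so $H\subseteq B$ is the required open Boolean subgroup. Since $G$ is extremally disconnected it is zero-dimensional and so has a base of clopen neighbourhoods of $e$, which is the topological flexibility one has to work with; note also that $BB=\bigcup_{a\in B}aB$ is open (each $aB$ being clopen), so $B\subseteq BB$, and the whole question is whether $e$ has a clopen neighbourhood $V\subseteq B$ with $V\times V$ avoiding the open set $\{(a,b)\in B\times B:ab\notin B\}$.

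That last point is, I expect, the main obstacle. One cannot simply feed the commutator map $(a,b)\mapsto aba^{-1}b^{-1}$ into Frol\'\i k's theorem, because $G\times G$ need not be extremally disconnected --- indeed, by the Proposition above, $X\times X$ extremally disconnected already forces $X$ discrete --- and one cannot take $V$ to be an intersection of finitely many centralizers $C_G(a_i)$, since nothing keeps such an intersection inside $B$ (it always contains the centre of $G$). What is available is the ``separate clopenness'' of the commuting relation $D=\{(a,b):ab=ba\}$: for fixed $a$ the slice $\{b:(a,b)\in D\}$ is $C_G(a)$, which is clopen, and symmetrically in $b$. Converting this slice-wise data into a genuine neighbourhood of $(e,e)$ is the delicate step, and it is here that the full strength of the extremal disconnectedness of $G$ itself must be exploited --- e.g.\ by repeatedly feeding self-homeomorphisms built from the group operations into Frol\'\i k's theorem (one such consequence is that $x\mapsto x^2$ has all its fibres clopen and is therefore locally constant, so that maps like $x\mapsto xaxa^{-1}=((xa)^2)a^{-2}$ are locally constant near $e$) and then patching the resulting local data together along the clopen base at $e$. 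Once $V$ has been found, the routine verification of the previous paragraph completes the proof.
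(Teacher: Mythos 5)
Your reduction is exactly the paper's: by Frol\'\i k applied to inversion, $B=\{x\in G:x^2=e\}$ is a clopen neighbourhood of $e$; if $V\ni e$ is open with $V\subseteq B$ and $VV\subseteq B$, then any $a,b\in V$ commute and $H=\langle V\rangle=\bigcup_n V^n$ is an open (hence closed) Boolean subgroup. That part is fine. The problem is that you then declare the existence of such a $V$ to be ``the main obstacle'' and ``the delicate step,'' and your last paragraph never actually produces it --- it only gestures at locally constant maps and ``patching along the clopen base.'' As written, the proof is incomplete.

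But the step you are stuck on is trivial and uses no extremal disconnectedness at all: it is the defining continuity property of a topological group. The multiplication $m\colon G\times G\to G$ is jointly continuous, $B$ is an open neighbourhood of $e=m(e,e)$, so $m^{-1}(B)$ contains a basic box $V_1\times V_2$ with $V_1,V_2\ni e$ open; put $V=V_1\cap V_2\cap B$ (intersecting with $B$ is even redundant, since $V\subseteq VV\subseteq B$ once $e\in V$). This is the standard fact that every neighbourhood $U$ of $e$ admits a neighbourhood $V$ with $VV\subseteq U$, and it is precisely how the paper obtains its $V$ with $V^2\subseteq U$. Your worry that ``$G\times G$ need not be extremally disconnected'' is a red herring: Frol\'\i k is needed only once, to make $B$ open; after that, nothing about $G\times G$ beyond the product topology and continuity of $m$ is used. (Nor do you need $V$ clopen --- a subgroup containing an open neighbourhood of $e$ is automatically open.) Replace your third paragraph with this one line of continuity and the proof is complete and coincides with the paper's.
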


This theorem follows from Frol\'\i k's  fixed-point theorem
mentioned at the end of the preceding section.  
In~\cite{Malykhin} Malykhin reproved Frol\'\i k's theorem for 
the particular self-homeomorphism $g \mapsto g^{-1}$; its fixed point set $U$ 
is an open neighborhood of the identity element, and the subgroup $H$ generated by 
an open neighborhood $V$ of the identity for which 
$V^2\subset U$ is as required. Indeed, any $u, v\in V$ commute: $uv\in U$ 
and hence $vu= vu(uvuv)=(v(uu)v)uv= uv$. Therefore, each $h\in H$ belongs to $U$, because 
$h = v_1v_2\dots v_n$ for some $v_1, v_2, \dots, v_n \in V$, and the $v_i$ commute (and belong to 
$U$), so that $h^2=v_1^2v_2^2\dots v_n^2$ equals the identity element. 

Thus, in the theory of extremally disconnected groups only Boolean groups matter. However, 
as we shall see later on, a nondiscrete free Boolean topological group cannot be extremally 
disconnected in ZFC~\cite{Sipa-free-ed}; moreover, in the case of countable groups, this 
it true for any (not necessarily free) group topologies~\cite{edgroups}. 

Extremal disconnectedness in groups is closely related to properties of 
countable discrete subsets of these groups. Thus, the presence of 
a countable nonclosed discrete set in an extremally disconnected group implies the existence 
of a $P$-ultrafilter~\cite{Zelenyuk06}, and the proof of the nonexistence in ZFC of 
countable nondiscrete extremally disconnected groups is based on the construction of two 
two disjoint discrete sequences with the same unique limit point under the assumption that there 
are no rapid ultrafilters~\cite{edgroups}. Below we present yet another observation concerning 
discrete subsets, more relevant to the context of free topological groups. 

\begin{theorem}
\label{li}
\begin{enumerate}
\item[(i)]
If $G$ is a hereditarily extremally disconnected Boolean \tolerance1000 group, 
then any closed linearly independent subset of 
$G$ contains at most one nonisolated point. 
\item[(ii)]
If $G$ is an extremally disconnected Boolean group, then any countable closed 
linearly independent subset of $G$ contains at most one nonisolated point. 
\end{enumerate}
\end{theorem}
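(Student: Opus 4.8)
The plan is to reduce everything to a statement about filters via the identification of free Boolean groups on almost discrete spaces with the groups $B(\omega_{\mathscr F})$, and then to exploit the fact that a closed linearly independent set containing a nonisolated point looks, near that point, like a copy of such a group. Suppose $G$ is (hereditarily) extremally disconnected and $L \subset G$ is closed and linearly independent with two distinct nonisolated points $a$ and $b$. Since $L$ is linearly independent, the subgroup $\langle L \rangle$ is the free Boolean group $B(L)$ (as an abstract group), and I would first argue that the topology induced on $L$ from $G$ is fine enough that a suitable neighbourhood structure survives. The key reduction: translate by $a$, so that $0 \in L' := a + L$ is a nonisolated point and $c := a+b \in L'$ is another nonisolated point; $L'$ is still closed, linearly independent, and contained in the extremally disconnected group $a + G = G$. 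Now restrict attention to the two ``stars'' $L' \cap (\text{nbhd of }0)$ and $L' \cap (\text{nbhd of }c)$, each of which is an almost discrete space whose nonisolated point gives a free filter.

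Next I would derive a contradiction from extremal disconnectedness. The two sets $U_0 = \{x \in L' : x \text{ near } 0\}$ and $U_c = \{x \in L' : x \text{ near } c\}$ are disjoint open subsets of $L'$ (shrink the neighbourhoods so they separate $0$ and $c$, using Hausdorffness and that $0 \ne c$). Because $L'$ is a closed subspace of the extremally disconnected group $G$, and closed subspaces of extremally disconnected spaces need not be extremally disconnected in general — but here case (i) assumes $G$ is \emph{hereditarily} extremally disconnected, so $L'$ is extremally disconnected. Hence $\overline{U_0}$ and $\overline{U_c}$ (closures in $L'$, equivalently in $G$ since $L'$ is closed) are disjoint. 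The contradiction will come from producing a single point — built as a limit of finite sums of elements of $L'$ drawn alternately from near $0$ and near $c$, using that both the filter at $0$ and the filter at $c$ are free, together with the description III of the free Boolean group topology via Graev seminorms — that lies in $\overline{U_0} \cap \overline{U_c}$, or more precisely a point of $L'$ (or of $\langle L'\rangle$, then pushed back into $L'$ by linear independence controlling cancellation) witnessing non-extremal-disconnectedness. The linear independence of $L'$ is exactly what guarantees no unexpected cancellation among these sums, so that the constructed words genuinely represent distinct points and the sets do not collapse.

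For part (ii), the group $G$ is only assumed extremally disconnected, not hereditarily so, but the linearly independent set is assumed \emph{countable}. Here I would invoke the fact recorded earlier in the excerpt that any hereditarily normal — in particular any countable — extremally disconnected space is hereditarily extremally disconnected. Since a countable closed subset $L$ of $G$ is a countable (hence hereditarily normal) extremally disconnected space in the subspace topology, it is hereditarily extremally disconnected, and then the argument of part (i) applies verbatim with $G$ replaced by the subspace $\langle L\rangle$ or simply by carrying out the combinatorics inside $L$ itself. So (ii) follows from (i) plus countability.

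The main obstacle I anticipate is the construction in the middle of part (i): manufacturing the common limit point of the two disjoint open sets. This requires choosing, for each neighbourhood (equivalently each element of the two free filters), matching finite selections from near $0$ and from near $c$ so that the resulting word has small Graev seminorm with respect to every continuous pseudometric — i.e. genuinely converges to a point of $L'$ — while linear independence prevents the word from reducing in a way that would place it outside both $\overline{U_0}$ and $\overline{U_c}$. Controlling simultaneously the topology (seminorms / description III) and the algebra (reducedness forced by linear independence) is the delicate part; the rest is bookkeeping with translations and the filter identification.
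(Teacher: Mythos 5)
There is a genuine gap, and it sits exactly where you anticipated it: the ``manufacture a common limit point'' step is not only missing but cannot be carried out in the form you describe. After translating the whole set by $a$, your two stars $U_0$ and $U_c$ are subsets of the closed set $L'$, so $\overline{U_0}$ and $\overline{U_c}$ are contained in $L'$; a common point would have to lie in $L'$ itself, whereas the ``finite sums drawn alternately from near $0$ and near $c$'' are words of length $\ge 2$ over the (essentially) independent set $L'$ and so, with trivial exceptions, lie outside $L'$. More fundamentally, no contradiction can be extracted from the subspace $L'$ alone: a space with two nonisolated points can perfectly well be extremally disconnected (e.g.\ the disjoint union of two copies of $\omega_{\mathscr U}$ for an ultrafilter $\mathscr U$), so hereditary extremal disconnectedness of $G$ restricted to $L'$ is simply not violated. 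Translating everything by the single element $a$ keeps the two limit points $0$ and $a+b$ apart and gains nothing; the group structure has to be used to move the two stars \emph{onto each other}. Separately, your appeal to description~III and Graev seminorms is illegitimate here: $G$ is an arbitrary (hereditarily) extremally disconnected Boolean topological group, so the subgroup generated by $L$ need not carry anything like the free topology.

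The paper's proof does the translation asymmetrically: with disjoint closed neighborhoods $U\ni a$ and $V\ni b$, it forms $a+(V\cap A)$ and $b+(U\cap A)$. Both translates accumulate at the single point $a+b$, they are closed (translates of closed sets), and linear independence forces them to meet only in $\{a+b\}$; hence $A'=a+((V\setminus\{b\})\cap A)$ and $B'=b+((U\setminus\{a\})\cap A)$ are disjoint, each relatively open in the union $\bigl(a+(V\cap A)\bigr)\cup\bigl(b+(U\cap A)\bigr)$, and $a+b$ lies in the closure of each --- contradicting hereditary extremal disconnectedness. For (ii), your reduction also fails: $L$ (or $\langle L\rangle$) is a countable subspace of $G$, but subspaces and subgroups of an extremally disconnected group need not be extremally disconnected, so you cannot invoke ``countable $+$ e.d.\ $\Rightarrow$ hereditarily e.d.'' for it. The paper instead observes that the countable sets $A'$ and $B'$ are separated in $G$, applies Remark~\ref{ched} to fatten them to disjoint open subsets of $G$, and contradicts extremal disconnectedness of $G$ itself at the point $a+b$.
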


A source of examples of hereditarily extremally disconnected spaces is provided by the 
following simple observation.

\begin{remark}[{see \cite[Exercise~6.2.G\,(c)]{Engelking}}]
\label{hed}
Any hereditarily normal extremally disconnected space is hereditarily extremally  
disconnected. Indeed, suppose that 
$X$ is a hereditarily normal extremally disconnected space. Let us show that 
any $Y \subset X$ is extremally disconnected. We can assume that $Y$ is closed in $X$, because, 
obviously, any dense subspace of an extremally disconnected space is extremally disconnected.  
We must show that the closures (in $Y$ or in $X$, there is no difference) of any 
disjoint sets $U$ and $V$ which are open in $Y$ are disjoint. Note that the closures $U'$ 
and $V'$ of $U$ and $V$ in the open subspace $Z=X\setminus (\overline U\cap \overline V)$ of $X$ 
are disjoint. Since $X$ is hereditarily normal, there exist disjoint open (in $Z$ and, therefore, 
in $X$) sets $U''\supset U'$ and $V''\supset V'$. Their closures in 
$X$ cannot intersect, because $X$ is extremally disconnected, and hence the closures in $Y$ 
of the smaller sets $U$ and $V$ do not intersect either.
\end{remark}

In the proof of Theorem~\ref{li} and later on we use the following obvious fact.  

\begin{remark}
\label{ched}
If countable sets $A$ and $B$ in a regular space $X$ are separated (i.e., each of them 
is disjoint from the other's closure), then they have disjoint open neighborhoods. Indeed, 
numbering the elements of $A$ and $B$ as $a_1,a_2,\dots$ and $b_1, b_2,\dots$, respectively, 
we can construct neighborhoods $U_i$ of $a_i$ and $V_i$ of $b_i$ so that 
each $\overline U_i$ is disjoint from $\overline B$ and from all $\overline{V_j}$ with $j\le i$ and 
each $\overline V_i$ is disjoint from $\overline A$ and from all $\overline{U_j}$ with $j\le i$. 
Clearly, $\bigcup U_i$ and $\bigcup V_i$ are disjoint open neighborhoods of $A$ and $B$.
\end{remark}

\begin{proof}[Proof of Theorem~\ref{li}]
(i) Let $A\subset G$ be a linearly independent subset of $G$. Suppose that $a\in A$ 
and $b\in A$ are 
distinct limit points of $A$. Take their disjoint closed neighborhoods $U\ni a$ and $V\ni b$. 
Since $A$ is linearly independent and closed, it follows that 
$a+ (V\cap A)\cap b + (V\cap A)= \{a + b\}$ and the 
sets $a+ (V\cap A)$ and $b + (V\cap A)$ are closed in $G$. 
Therefore, each of the disjoint sets $A'=a + ((V\setminus \{b\})\cap A)$ and 
$B'=b + ((U\setminus \{a\})\cap A)$ is open  
in the subspace $a+(V \cap A)\cup  b+(U \cap A)$  of $G$; 
obviously, $a+b$ belongs to the closure of 
each of them, which contradicts the hereditary extremal disconnectedness 
of $G$.

(ii) Arguing as in (i), we obtain countable sets $A'=a + ((V\setminus \{b\})\cap A)$ and 
$B'= b + ((U\setminus \{a\})\cap A)$, which are separated in $G$ and, therefore, 
have disjoint open neighborhoods $U$ and $V$ (see Remark~\ref{ched}). 
We have $\overline U\cap \overline V \supset 
\overline {A'}\cap \overline {B'} \ni a + b$, which contradicts the extremal disconnectedness 
of~$G$.
\end{proof}

\begin{corollary}
\label{cor-edfilter}
If $X$ is a nondiscrete countable space for which the free Boolean topological group $B(X)$ 
is extremally disconnected, then $X$ is the space $\omega_{\mathscr U}$ associated with 
an ultrafilter $\mathscr U$ on~$\omega$. 
\end{corollary}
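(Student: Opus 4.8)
The plan is to combine Corollary's two hypotheses---$X$ countable and nondiscrete, $B(X)$ extremally disconnected---and read off the conclusion from Theorem~\ref{li}\,(ii) together with the fact that a countable nondiscrete space $X$ with $B(X)$ extremally disconnected must be almost discrete associated with an ultrafilter. First I would observe that $X$ sits inside $B(X)$ as a closed, linearly independent subset (this is the free basis of the Boolean group, and $X$ is closed in $B(X)$ by the Arkhangel'skii argument quoted earlier), and $X$ is countable, so Theorem~\ref{li}\,(ii) applies directly: $X$ has at most one nonisolated point. Since $X$ is assumed nondiscrete, it has \emph{exactly} one nonisolated point; call it $*$. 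Hence $X$ is an almost discrete space, $X = \omega_{\mathscr F}$ for the filter $\mathscr F$ of punctured neighborhoods of $*$ on the countable discrete set $X\setminus\{*\}$ (which we identify with $\omega$).

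It remains to upgrade $\mathscr F$ from a filter to an ultrafilter. Here the key is the remark made earlier in the excerpt: the almost discrete space $X_{\mathscr F}$ is extremally disconnected if and only if $\mathscr F$ is an ultrafilter. So I would argue that $X = \omega_{\mathscr F}$ is extremally disconnected. This follows because $X$ is a closed subspace of the extremally disconnected group $B(X)$, and a closed subspace of an extremally disconnected \emph{group} is extremally disconnected---indeed, by Malykhin's theorem $B(X)$ contains an open Boolean subgroup, but more to the point, $B(X)$ is hereditarily normal when $X$ is countable (a countable regular space is metrizable? no---rather, a countable extremally disconnected group is, by the results invoked in the paper, hereditarily extremally disconnected, as stated in the sentence preceding the Corollary: ``any hereditarily normal, in particular, countable, extremally disconnected space is hereditarily extremally disconnected''). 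So $B(X)$, being countable and extremally disconnected, is hereditarily extremally disconnected, whence its subspace $X$ is extremally disconnected. Therefore $\mathscr F$ is an ultrafilter $\mathscr U$, and $X = \omega_{\mathscr U}$.

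Putting the two parts together: $X$ is countable, nondiscrete, almost discrete with unique nonisolated point $*$, and extremally disconnected; identifying $X\setminus\{*\}$ with $\omega$, the neighborhood filter of $*$ is an ultrafilter $\mathscr U$ on $\omega$, and $X \cong \omega_{\mathscr U}$, as claimed.

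The main obstacle, I expect, is making precise the passage from ``$B(X)$ extremally disconnected and countable'' to ``$X$ extremally disconnected''---i.e., justifying that one may restrict extremal disconnectedness to the closed subspace $X$. The cleanest route is via the quoted fact that countable extremally disconnected spaces are hereditarily extremally disconnected (Remark~\ref{hed} plus the observation that $B(X)$ is countable and hereditarily normal because it is countable regular, hence metrizable? no, countable regular spaces are paracompact and in fact perfectly normal, hence hereditarily normal). Once that is in hand, every subspace of $B(X)$---in particular $X$---is extremally disconnected, and the rest is bookkeeping. An alternative, avoiding hereditary normality, is to use Theorem~\ref{li}\,(ii) only for the ``at most one nonisolated point'' conclusion and then separately note that an almost discrete subspace of an extremally disconnected group must itself be extremally disconnected by a direct two-open-sets argument inside the group; either way the heart of the matter is Theorem~\ref{li}\,(ii).
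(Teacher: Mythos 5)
Your argument is correct and follows essentially the same route as the paper: Theorem~\ref{li}\,(ii) applied to the closed, countable, linearly independent basis $X\subset B(X)$ gives at most one nonisolated point, and then the hereditary extremal disconnectedness of the countable (hence hereditarily normal) group $B(X)$ forces $X=\omega_{\mathscr F}$ to be extremally disconnected, i.e., $\mathscr F$ to be an ultrafilter. The only cleanup needed is to delete the hedging about metrizability: a countable regular space is Lindel\"of and hence (hereditarily) normal, which is all that Remark~\ref{hed} requires.
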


Indeed, $X$ must be a filter space by Theorem~\ref{li}, and $X$ must be extremally disconnected, 
because $B(X)$ is countable and, hence, hereditarily extremally disconnected. Therefore, the 
filter associated with $X$ must be an ultrafilter.

In fact, $\mathscr F$ must be a Ramsey ultrafilter (see the next theorem and Theorem~\ref{Ramsey} 
in the last section). 

A detailed insight into extremally disconnected free Boolean topological groups 
on filter spaces can be gained from Section~\ref{secforcing}; here we present more general 
considerations.
 
The following theorem is a direct consequence of Theorem~1 in~\cite{Sipa-free-ed}. 

\begin{theorem}
\label{t1}
Let $X$ be a topological space satisfying any of the following two conditions: 
\begin{enumerate}
\item[(i)] the free Boolean topological group $B(X)$ is extremally disconnected;
\item[(ii)] $\operatorname{ind} X = 0$ and the free Boolean linear topological group 
$B^{\mathrm{lin}}(X)$ is extremally disconnected. 
\end{enumerate}
Then either $X$ is a $P$-space or there exists a Ramsey ultrafilter on $\omega$.  
\end{theorem}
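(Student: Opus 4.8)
Since the conclusion permits the alternative ``$X$ is a $P$-space,'' the plan is to assume that $X$ is not a $P$-space and to deduce the existence of a Ramsey ultrafilter on $\omega$, exploiting extremal disconnectedness of $B(X)$ in case~(i) and of $B^{\mathrm{lin}}(X)$ in case~(ii). First I would localize the failure of the $P$-property: a Tychonoff space is a $P$-space exactly when every zero-set is open, so there are a point $x_0\in X$ and a continuous $f\colon X\to[0,1]$ with $f(x_0)=0$ for which $Z=f^{-1}(0)$ is not a neighborhood of $x_0$; hence $x_0\in\overline{X\setminus Z}$, and, since an open set containing $x_0$ would be a neighborhood of it, $\inf\{f(x):f(x)>0\}=0$. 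Consequently I can pick pairwise distinct $x_n\in X$ with $0<f(x_{n})<2^{-n}$, and, more generally, try to arrange $g(x_n)\to 0$ for every continuous $g\colon X\to[0,1]$ with $g(x_0)=0$; the sets $\{n:g(x_n)<\varepsilon\}$, with $g$ and $\varepsilon$ varying, then generate a filter $\mathscr G$ on $\omega$ (the trace on $\{x_n\}$ of the neighborhood filter of $x_0$), and this $\mathscr G$ is the candidate Ramsey ultrafilter. A genuine point to be dealt with here is that $\mathscr G$ is proper only if $x_0\in\overline{\{x_n:n\in\omega\}}$, so the countable set must be chosen with care; when $X$ has uncountable tightness at $x_0$ this is the delicate part (essentially Theorem~1 of~\cite{Sipa-free-ed}).

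The role of extremal disconnectedness is to force $\mathscr G$ to satisfy one of the sequential characterizations of Ramsey ultrafilters, say condition~(iii) or~(v) of Theorem~\ref{ramseychar}. Given a sequence $A_0,A_1,\dots\in\mathscr G$, I would build from the corresponding generators $x_n$ a set $D=\bigcup_m D_m\subset B(X)$ in which each $D_m$ consists of words of length $m+1$ chosen so that $D_m$ is a closed discrete subspace of $B(X)$ — verified, as in the proof given above that $B(\omega_{\mathscr F})$ lacks the inductive limit topology for non-$P$-filters $\mathscr F$, by pushing $D_m$ forward under the homomorphic extension of a retraction of $X$ onto a discrete subspace — while the pseudometric estimates at $x_0$ and description~III force $0\in\overline D$. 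Extremal disconnectedness then enters through the recursive dichotomy that $0$, lying in the closure of a finite union of pairwise disjoint open sets, lies in the closure of exactly one of them, which moreover has clopen closure; pruning $D$ accordingly isolates a set in $\mathscr G$ that simultaneously diagonalizes the $A_n$ in the manner required by conditions~(iii)/(v). As remarked after Theorem~\ref{ramseychar}, any filter satisfying these conditions is automatically a Ramsey ultrafilter, so we are done; alternatively, the separation argument of Remark~\ref{ched}, used as in the proof of Theorem~\ref{li}, rules out $\mathscr G$ failing to decide some subset of $\omega$, directly giving that $\mathscr G$ is an ultrafilter.

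Case~(ii) runs along identical lines with $B^{\mathrm{lin}}(X)$ in place of $B(X)$: since $\operatorname{ind}X=0$, Theorem~\ref{free-linear} and the description of the free linear Boolean topology preceding Theorem~\ref{prop-linear} furnish a base at $0$ consisting of the open — hence clopen — subgroups $\langle U(\gamma)\rangle$, with $\gamma$ ranging over disjoint open covers of $X$; the extremal-disconnectedness dichotomy is applied to the finitely many cosets of such a subgroup meeting the configuration $D$, and the rest of the argument is unchanged.

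The step I expect to be the main obstacle is the whole of the second paragraph together with the careful choice of the countable set $\{x_n\}$ in the first: keeping each $D_m$ closed and discrete, forcing $0\in\overline D$ with respect to \emph{every} continuous pseudometric on the (possibly non-first-countable, possibly high-tightness, possibly uncountable) space $X$ rather than merely with respect to the pseudometric that localizes the failure of the $P$-property, and threading the recursion so that the single diagonalizing set lies in $\mathscr G$. It is precisely here that extremal disconnectedness of $B(X)$ (or of $B^{\mathrm{lin}}(X)$) must be genuinely exploited, the bare failure of the $P$-property not sufficing, and this is the content of Theorem~1 of~\cite{Sipa-free-ed}; everything downstream of a correctly chosen configuration is the routine manipulation of closures and of the explicit topology descriptions already used in the proofs above.
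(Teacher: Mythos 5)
There is a genuine gap, and it sits exactly where you flag it: the construction of the countable set $\{x_n\}$ and the properness of the trace filter $\mathscr G$. If $X$ fails to be a $P$-space at a point $x_0$ of uncountable tightness, no countable subset of $X\setminus Z$ need have $x_0$ in its closure, so $\mathscr G$ can be improper and the entire subsequent machinery (the configurations $D_m$, the pseudometric estimates forcing $0\in\overline D$) has nothing to act on. You acknowledge this and defer to Theorem~1 of \cite{Sipa-free-ed} --- but that is essentially the statement being proved, so the deferral does not constitute a proof. The second paragraph is also too schematic to check: ``pruning $D$ accordingly isolates a set in $\mathscr G$ that simultaneously diagonalizes the $A_n$'' is the whole content of the selectivity argument, and nothing in the sketch explains how the clopen-closure dichotomy produces a single diagonalizing set that actually belongs to $\mathscr G$ rather than merely some subset of the index set.

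The paper avoids both problems with a short reduction that your proposal misses. Since $X$ is zero-dimensional (it embeds in an extremally disconnected, hence zero-dimensional, group), the witnessing neighborhoods $U_i$ of $x_0$ may be taken clopen and decreasing; contracting $C_0=\bigcap_i U_i$ and each annulus $C_i=U_i\setminus U_{i+1}$ to a point gives a continuous quotient map $\varphi\colon X\to\omega_{\mathscr F}$ onto a countable almost discrete space whose distinguished point is automatically nonisolated (because $x_0\notin\Int C_0$) --- no countable set of points of $X$ accumulating at $x_0$ is needed, so tightness is irrelevant. The extension $\bar\varphi\colon B(X)\to B(\omega_{\mathscr F})$ is a continuous surjective homomorphism onto the free (or free linear) group, hence quotient and open because the target carries the finest compatible (linear) group topology; open continuous images of extremally disconnected spaces are extremally disconnected, so $B(\omega_{\mathscr F})$ is extremally disconnected and Theorem~\ref{Ramsey} yields that $\mathscr F$ is a Ramsey ultrafilter. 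All the delicate work is thereby pushed into the filter-space case, which is handled separately via Theorem~\ref{Thuemmel}; your plan attempts to redo that work inside an arbitrary $X$, which is exactly the ``much more complicated'' route of \cite{Sipa-free-ed} that the paper's proof is designed to bypass, and as written it neither completes that route nor reduces to the filter-space case.
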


\begin{proof}
Let $\tau$ be the free group topology on $B(X)$ in case (i) and the free linear group topology on 
$B(X)$ in case (ii). Suppose that $X$ is not a $P$-space, that is, some point 
$x_0\in X$ has a countable 
family of open neighborhoods $U_i$ ($i\in \mathbb N $) such that the 
interior of their intersection does not contain $x_0$. 
We can assume that $U_1=X$, 
$U_{i+1}\subset U_i$ for $i\in \mathbb N $, and, moreover, all 
neighborhoods 
$U_i$ are clopen, because $X$ 
is zero-dimensional (as a subspace of the extremally disconnected space 
$(B(X), \tau)$). 

Consider the countable space $Y$ obtained as 
the quotient image of $X$ under the map contracting the closed 
set $C_0=\bigcap_{i\in \mathbb N } U_i$  and the 
clopen sets $C_i=U_{i}\setminus U_{i+1}$, $i\in \mathbb N$, to points; namely, 
$Y$ is the countable set 
$Y=\{y_i: i\in \omega \}$ with the quotient topology induced 
by the map $\varphi \colon X\to Y$ defined as 
$$
\varphi (x)=\begin{cases}
y_0 &\text{if $x\in \bigcap_{i\in \mathbb N } U_i=C_0$,}\\
y_{i} &\text{if $x\in U_i\setminus U_{i+1}=C_i$, $i\in \mathbb N$.}
\end{cases}
$$
Clearly, the point  $y_0$ is not isolated in the space $Y$ (while all 
of the other points $y_i$ are isolated). Thus, $Y$ is a space of the form $\omega_{\mathscr F}$ 
for some filter $\mathscr F$ on $\omega$. 

Let $\tau'$ be the free group topology on $B(Y)$ in case (i) and the free linear group topology on 
$B(Y)$ in case (ii). The quotient map $\varphi: X\to Y$ extends to a continuous homomorphism 
$\bar\varphi: (B(X), \tau) \to (B(Y), \tau')$. Since $\tau'$ is the strongest group 
(or linear group) topology inducing the topology of $Y$ on $Y$, it follows that this homomorphism 
is quotient and, hence, open (as any quotient homomorphism). Open maps preserve 
extremal disconnectedness (see~{\cite[Problem~177]{Arh-Pon}}); therefore, 
the group $(B(Y), \tau')$ is extremally disconnected. It remains to apply Theorem~\ref{Ramsey} 
of the last section.
\end{proof} 

This theorem was proved in~\cite{Sipa-free-ed} in a more general situation (for $B(X)$ with any 
topology such that all continuous maps $X\to \mathbb Z_2$ extend to continuous homomorphisms), 
and the proof given there is much more complicated, because the continuity 
of $\bar\varphi$ is not automatic and Theorem~\ref{Ramsey} does not apply in this general 
situation. 

Theorem~\ref{t1} has the following immediate consequence. 

\begin{corollary}
The nonexistence of nondiscrete extremally disconnected free Boolean \tolerance1000 topological and linear 
topological groups is consistent with~\textup{ZFC}.
\end{corollary}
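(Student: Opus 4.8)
The plan is to exhibit a single model of ZFC in which \emph{no} nondiscrete space carries an extremally disconnected free Boolean or Boolean linear topological group; producing such a model is exactly what the consistency statement asks for. The natural candidate is one of Shelah's models in which there are no $P$-point ultrafilters on $\omega$ (see~\cite{Shelah}). Since any Ramsey ultrafilter is a $P$-point, such a model contains no Ramsey ultrafilters at all. Working inside it, suppose $X$ is nondiscrete and that either $B(X)$ is extremally disconnected, or $\operatorname{ind}X=0$ and $B^{\mathrm{lin}}(X)$ is extremally disconnected. Theorem~\ref{t1} then leaves a single possibility: $X$ must be a $P$-space. Thus the entire problem collapses to showing that a nondiscrete $P$-space cannot yield an extremally disconnected free Boolean (linear) group in this model.

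To dispose of the $P$-space branch I would first record two elementary facts. In a $P$-space every countable subset is discrete, since a point of a countable set is separated from the rest by a $G_\delta$ (hence open) set; consequently there are no countable nondiscrete $P$-spaces, and in particular the filter spaces $\omega_{\mathscr F}$ are never $P$-spaces, because a free filter on $\omega$ cannot be closed under countable intersections. This settles the countable case at once: a countable nondiscrete $X$ is not a $P$-space, so Theorem~\ref{t1} would hand us a Ramsey ultrafilter, contradicting the choice of model. (For case~(i) one may instead invoke Corollary~\ref{cor-edfilter}, which forces $X=\omega_{\mathscr U}$, again not a $P$-space.) What remains is the treatment of uncountable $P$-spaces $X$.

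For uncountable $X$ the idea is to feed $X$, viewed as a closed linearly independent subset of the extremally disconnected group $B(X)$, into Theorem~\ref{li}. If $B(X)$ is \emph{hereditarily} extremally disconnected, then part~(i) of that theorem forces $X$ to have at most one nonisolated point, i.e.\ $X$ is almost discrete, $X=\kappa_{\mathscr F}$. Being a $P$-space then means that $\mathscr F$ is closed under countable intersections, while hereditary extremal disconnectedness makes the closed subspace $X$ itself extremally disconnected, so $\mathscr F$ is an ultrafilter; hence $\mathscr F$ is a countably complete free ultrafilter. If I arrange Shelah's construction over a ground model with no measurable cardinals (for instance over $L$, noting that set forcing over $L$ creates no measurables since it cannot add $0^{\#}$), then no such ultrafilter exists, and the uncountable $P$-space case collapses as well. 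The argument for $B^{\mathrm{lin}}(X)$ is identical, using case~(ii) of Theorem~\ref{t1}.

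The hard part will be the passage through Theorem~\ref{li}(i) in the uncountable case, which requires $B(X)$ to be \emph{hereditarily} extremally disconnected. This is automatic for countable groups via Remark~\ref{hed}, but not for uncountable ones, since free Boolean groups on uncountable spaces need not be hereditarily normal, so Remark~\ref{hed} does not apply. Removing this hypothesis is the delicate point: one would either prove directly that an extremally disconnected $B(X)$ over an uncountable $P$-space is already almost discrete, or appeal to the stronger ZFC results developed in the later sections (cf.~\cite{Sipa-free-ed}). Everything else is routine bookkeeping around Theorem~\ref{t1} together with the standard consistency facts about $P$-points and measurable cardinals.
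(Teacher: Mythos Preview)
Your overall strategy---work in a model with no Ramsey ultrafilters and no measurable cardinals, invoke Theorem~\ref{t1} to force $X$ to be a $P$-space, and then rule out nondiscrete $P$-spaces---matches the paper's. The difference lies in how the $P$-space branch is handled. The paper does it in one line, without Theorem~\ref{li} and without any hereditary-extremal-disconnectedness hypothesis: it quotes Isbell's classical result~\cite{Isbell} that every extremally disconnected $P$-space of nonmeasurable cardinality is discrete, and applies it to $B(X)$ itself. The implicit bridging step is that when $X$ is a $P$-space, so is $B(X)$ (and likewise $B^{\mathrm{lin}}(X)$); this follows from description~I of the free topology, since in a zero-dimensional $P$-space the universal uniformity is closed under countable intersections, so that given basic neighborhoods $U(\widetilde W^{(k)})$ of zero one may set $V_n=\bigcap_k W_n^{(k)}$ and obtain $U(\{V_n\})\subset\bigcap_k U(\widetilde W^{(k)})$. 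Thus $B(X)$ is an extremally disconnected $P$-space of nonmeasurable cardinality (in a model without measurables), hence discrete by Isbell. For the required model the paper cites~\cite{Enayat}; your choice of a Shelah no-$P$-point extension over $L$ would serve equally well.

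Your detour through Theorem~\ref{li}(i) is therefore unnecessary, and the gap you flag---the unavailability of hereditary extremal disconnectedness for uncountable $B(X)$---is a genuine obstacle on that route. Isbell's theorem handles all cardinalities uniformly and sidesteps the issue entirely.
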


Indeed, as is known, 
any extremally disconnected $P$-space of 
nonmeasurable\footnote{Recall that a cardinal $\kappa$ is \emph{measurable} 
if there exists an ultrafilter with the countable intersection property 
on a set of cardinality $\kappa$. Measurable cardinals do not exist in, e.g., 
$\mathrm{ZFC}+ (\mathrm{V}=\mathrm{L})$, while the 
consistency of their existence with ZFC has not been proved.}  
cardinality is discrete (see~\cite{Isbell}). On the other hand,   
the nonexistence of measurable cardinals and Ramsey ultrafilters 
is consistent with \textup{ZFC} (see~\cite{Enayat}).

\section{Free Boolean Groups on Filters on $\omega$}

We have already seen in the preceding sections that free Boolean groups on 
almost discrete countable spaces (associated with filters on $\omega$) exhibit quite interesting 
behavior. Moreover, they are encountered more often than it may seem at first glance. 

Consider any Boolean group $B(X)$ with countable basis $X$. 
As mentioned in Section~\ref{Preliminaries}, 
this group is (algebraically) isomorphic to  the direct sum (or, in topological terminology, 
$\sigma$-product)
$\bigoplus^{\aleph_0}\mathbb Z_2$ of countably many copies of $\mathbb Z_2$. 
There is a familiar natural topology on this $\sigma$-product, 
namely, the usual product topology; let us denote it by 
$\tau_{\mathrm{prod}}$. This topology induces 
the topology of a convergent sequence on $X\cup\{0\}$ 
(where $0$ denotes the zero element of $B(X)$) and is metrizable; 
therefore, it never coincides with the topology 
$\tau_{\mathrm{free}}$ of the free Boolean topological group on $X$.  Moreover, 
$\tau_{\mathrm{prod}}$ is contained in $\tau_{\mathrm{free}}$ only when $X=\omega_{\mathscr F}$ 
for some filter (recall that we assume all filters to be free, i.e., contain the filter of 
cofinite sets, and identify the nonisolated points of the associated spaces with the 
zeros of their free Boolean groups). 
On the other hand, any countable space is zero-dimensional; therefore, any 
countable free Boolean topological group contains a sequence of subgroups with trivial intersection 
(see Theorem~\ref{free-linear}). The following statement shows that the topology of 
any group with this property contains the product topology $\tau_{\mathrm{prod}}$ associated with 
some basis.

\begin{theorem}[\hspace{1sp}{\cite[Lemma~2]{Sipa-2015}}]    
\label{subgroups}
Let $G$ be a countable nondiscrete  Boolean topological group which contains 
a family of open subgroups $G_i$ with trivial intersection.
Then there exists a basis of $G$ 
such that the isomorphism $G\to \bigoplus^{\aleph_0}\mathbb Z_2$ 
taking this basis to the canonical 
basis of $\bigoplus^{\aleph_0}\mathbb Z_2$ is continuous with respect to the product 
topology on $\bigoplus^{\aleph_0}\mathbb Z_2=\sigma (\mathbb Z_2)^{\aleph_0}$. 
\end{theorem}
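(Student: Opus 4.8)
The plan is to build the desired basis of $G$ by a countable recursion that simultaneously enumerates $G$ and assigns each new element a ``coordinate'' controlled by the subgroups $G_i$. Since $G=\{g_0,g_1,\dots\}$ is countable and the $G_i$ can be assumed decreasing (replace $G_i$ by $\bigcap_{j\le i}G_j$, still open and with trivial intersection), the intersection $\bigcap_i G_i=\{0\}$ means that for every nonzero $g\in G$ there is a largest $i=i(g)$ with $g\in G_i$ (largest because the $G_i$ decrease and meet in $0$; if no such largest index existed, $g$ would lie in all $G_i$). I would use the function $g\mapsto i(g)$ as a ``depth'' and carve out a basis level by level. Concretely, first I would choose, for each $n$, a vector-space basis (over $\mathbb F_2$) of the finite-dimensional quotient $G_n/G_{n+1}$ and lift it; collecting these lifts over all $n$ yields a basis $X=\{x_0,x_1,\dots\}$ of $G$ with the property that $x_k\in G_{n}\setminus G_{n+1}$ where $n=n(k)$ is nondecreasing in $k$ and $n(k)\to\infty$. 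The point of this construction is that any $g\in G$ of depth $i(g)=n$ is an $\mathbb F_2$-combination of basis vectors all lying in $G_n$ (none in $G_0\setminus G_1,\dots,G_{n-1}\setminus G_n$), because modulo $G_{n+1}$ the element $g$ already vanishes, etc.

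Next I would verify that the induced coordinate isomorphism $\Phi\colon G\to\bigoplus^{\aleph_0}\mathbb Z_2$, sending $x_k$ to the $k$-th canonical generator $e_k$, is continuous when the target carries the product topology $\tau_{\mathrm{prod}}=\sigma(\mathbb Z_2)^{\aleph_0}$. Since $\Phi$ is a group homomorphism and $G$ is a topological group, it suffices to check continuity at $0$. A basic neighborhood of $0$ in $(\mathbb Z_2)^{\aleph_0}$ fixes finitely many coordinates $k_1,\dots,k_m$ to be $0$; let $N$ be larger than all the depths $n(k_1),\dots,n(k_m)$. I claim $\Phi(G_{N+1})$ lands in that neighborhood: if $g\in G_{N+1}$ then, writing $g=\sum_{k\in S}x_k$ in the basis, every $k\in S$ must have $n(k)\ge N+1$ — for otherwise the highest-depth contribution could not be cancelled inside $G_{N+1}$, contradicting $g\in G_{N+1}$ (this is exactly the ``largest index'' observation applied to the partial sums, level by level, starting from the smallest $n(k)$ occurring in $S$). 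Hence the coordinates $k_1,\dots,k_m$ are all outside $S$, so $\Phi(g)$ has $0$ in those positions. Thus $\Phi^{-1}$ of each subbasic neighborhood contains the open subgroup $G_{N+1}$, and $\Phi$ is continuous at $0$.

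The main obstacle — and the step deserving the most care — is the cancellation argument showing that membership of $g=\sum_{k\in S}x_k$ in $G_{N+1}$ forces $n(k)\ge N+1$ for all $k\in S$. One must argue inductively on the depth: let $n_0=\min\{n(k):k\in S\}$ and suppose $n_0\le N$; grouping the terms of depth exactly $n_0$, their sum is a nonzero element of $G_{n_0}$ whose image in $G_{n_0}/G_{n_0+1}$ is nonzero (the lifts were chosen to be linearly independent there), while all other terms of $g$ lie in $G_{n_0+1}$; hence $g\notin G_{n_0+1}\supseteq G_{N+1}$, a contradiction. This is where the specific construction of $X$ via bases of the successive quotients $G_n/G_{n+1}$ is essential; with an arbitrary basis the coordinates need not interact nicely with the filtration. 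Once this is in place, continuity of $\Phi$ is immediate and the theorem follows. (Note $\Phi$ is typically far from a homeomorphism — $\tau_{\mathrm{prod}}$ is strictly coarser than the topology of $G$ — but only one-sided continuity is asserted.)
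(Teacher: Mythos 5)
There is a genuine gap at the very first step of your construction. You assert that choosing a basis of each quotient $G_n/G_{n+1}$ and lifting it ``yields a basis $X$ of $G$.'' The lifts are indeed linearly independent (your cancellation argument shows this), but they need not span $G$, and this is precisely the crux of the whole proof. Concretely, let $G=\bigoplus^{\aleph_0}\mathbb Z_2$ with canonical basis $\{f_n\}$, topologized so that the decreasing subgroups $G_i=\langle f_j\colon j\ge i\rangle$ form a base at zero. Each quotient $G_i/G_{i+1}$ is one-dimensional with basis $f_i+G_{i+1}$, and $x_i=f_i+f_{i+1}$ is a perfectly legitimate lift; but every finite sum of the $x_i$ has an even number of nonzero canonical coordinates, so $f_0\notin\langle x_i\colon i\in\omega\rangle$ and $\{x_i\}$ is not a basis of $G$. (Informally: the level-by-level reduction of an element $g$ modulo the filtration need not terminate after finitely many steps.) So either you must prove that the lifts \emph{can} be chosen so as to span $G$, or you must produce the basis some other way --- and no argument for this appears in your proposal. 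This is exactly what the paper's proof spends most of its effort on: it refines the chain $\{G_i\}$ to a chain $\{H_\alpha\}$ with one-dimensional successive quotients and then, starting from an \emph{arbitrary} basis $\{e''_k\}$ of $G$, inductively replaces $e''_k$ by $e''_k$ plus previously constructed vectors so that each new vector sits in some $H_\alpha\setminus H_{\alpha+1}$ with $\alpha$ not yet used, while the span $\langle e''_0,\dots,e''_k\rangle$ is preserved at every step; spanning is then inherited from $\{e''_k\}$. A secondary inaccuracy: the quotients $G_n/G_{n+1}$ need not be finite-dimensional (open subgroups of a countable group can have infinite index), so your claim that the depth function $n(k)$ can be made nondecreasing with $n(k)\to\infty$ fails; this is harmless for your continuity argument (which only needs $\max\{n(k_1),\dots,n(k_m)\}$ for finitely many coordinates), but the finiteness claim should be dropped.

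The second half of your proposal --- once a basis adapted to the filtration is in hand, the preimage of a basic $\tau_{\mathrm{prod}}$-neighborhood of zero contains some $G_{N+1}$ and hence the coordinate isomorphism is continuous at zero --- is correct and agrees in substance with the paper's observation that the tails $\langle e_k\colon k\ge n\rangle$ are subgroups with nonempty interior. The missing piece is entirely in the construction of the basis.
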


\begin{proof}
We treat $G$ as a vector space  over the field $\mathbb Z_2$ and 
the $G_i$ as its subspaces. To prove the lemma, it suffices to construct 
a basis $E=\{e_n:  n\in\omega\}$ 
such that, for 
every $i\in \omega$,  there exists a $J_i\subset \omega$ for which $G_i = 
\langle e_n:n \in J_i\rangle $. Indeed, if $E$ is such a basis, then the assumption 
$\bigcap G_i=\{\mathbf{0}\}$ implies $\bigcap J_i = \varnothing$, and all linear spans $\langle 
e_k: k \ge n\rangle $ (which form a base of neighborhoods of zero in the product topology 
associated with the basis $E$) are open as subgroups with nonempty interior. 

In each (nontrivial) quotient space $G_i/G_{i+1}$, we 
take a basis $\{\varepsilon_\alpha: \gamma\in I_i\}$, where 
$|I_i|= \dim G_i/G_{i+1}$, and let 
$e_\alpha$ be representatives of 
$\varepsilon_\gamma$ in $G_i$. We assume the 
(at most countable) index sets $I_i$ to be well ordered and disjoint,  
let $I=\bigcup_{i\in \omega}I_i$, and endow $I$ with the lexicographic 
order (for $\alpha, \beta\in I$, we say that 
$\alpha<\beta$ if $\alpha\in I_i$, 
$\beta\in I_j$, and either $i<j$ or $i=j$ and $\alpha< \beta$ 
in $I_i$). For any $i\in \omega$ and $\alpha\in I_i$, we define 
$H_\alpha$ to be the subspace of $G$ spanned by $\{e_\beta: \alpha 
\in I_i, \beta\ge \alpha\}$ and $G_{i+1}$. Thus, 
$H_\beta$ is defined for each $\beta\in I$; moreover, if $\beta, \gamma \in I$ 
and $\beta < \gamma$ in $I$, then $H_\beta\supset H_\gamma$, and 
if $\alpha$ is the least element of $I_i$, then $H_\alpha=G_i$. 
This means that the subspaces $H_\alpha$ form a decreasing (with respect 
to the order induced by $I$) chain of subspaces refining the chain 
$G_0\supset G_1\supset\dots$\,.  Note that the lexicographic order on $I$ 
is a well-order, so for each $\alpha\in I$, its immediate successor 
$\alpha+1$ is  defined; by construction, we have 
$\dim H_{\alpha}/H_{\alpha+1}= 1$ for every $\alpha \in I$. 

Clearly, it suffices to construct a basis 
$E'=\{e'_\alpha: \alpha\in I\}$ with the property
$$
e'_\alpha\in H_\alpha\setminus H_{\alpha+1}
 \qquad\mbox{for every}\quad  \alpha\in I; 
\eqno{(\star)}
$$
the required basis $E$ is then obtained by reordering $E'$. Moreover, 
property~$(\star)$ ensures the linear independence of $E'$, so it is sufficient 
to construct a set of vectors spanning $G$ with this property. 

Take any basis $E''= \{e''_n: n\in \omega\}$ in $G$. We construct $E'$ 
by induction on $n$. 

Let $\alpha_0$ be the (unique) element of $I$ for which $e''_0\in 
H_{\alpha_0}\setminus H_{\alpha_0+1}$. We set $e'_{\alpha_0}=e''_0$. 

Suppose that $k$ is a positive integer 
and we have already defined elements $\alpha_i\in I$ 
and  vectors $e'_{\alpha_i}$ for $i<k$ so that 
$e'_{\alpha_i}\in H_{\alpha_i}\setminus H_{\alpha_i+1}$ and 
$\langle e'_{\alpha_0}, \dots, e'_{\alpha_{k-1}}\rangle = 
\langle e''_0,\dots, e''_{k-1}\rangle $. Take the (unique) $\alpha\in I$ 
for which $e''_k\in H_{\alpha}\setminus H_{\alpha+1}$. If $\alpha$ 
is unoccupied, that is, $\alpha \ne \alpha_i$ 
for any $i<k$, then we set $\alpha_k=\alpha$ and $e'_{\alpha_k}=e''_k$. If 
$\alpha$ is already occupied, i.e., $\alpha=\alpha_m$ for some $m<k$, then 
we take the (unique) $\beta\in I$ for which $e''_k+e'_{\alpha_m}\in   
H_{\beta}\setminus H_{\beta+1}$. (Clearly, $\beta>\alpha$, because 
$\dim H_{\alpha}/H_{\alpha+1}=1$ and $e''_k, e'_{\alpha_m} 
\in H_{\alpha}\setminus H_{\alpha+1}$). If $\beta$ is unoccupied, 
then we set $\alpha_k= \beta$ and $e'_{\alpha_k}=e''_k+e'_{\alpha_m}$; if 
$\beta= \alpha_l$ for $l<k$, then we take the (unique) $\gamma \in I$ for 
which $e''_k+e'_{\alpha_m}+e'_{\alpha_l}\in   
H_{\gamma}\setminus H_{\gamma+1}$ (clearly, $\gamma>\beta>\alpha$), and 
so on.  Only finitely many ($k-1$) indices from $I$ are occupied; therefore, 
after finitely many steps, we obtain $e''_k+e'_{\alpha_m}+e'_{\alpha_l}+
\dots + e'_{\alpha_s}\in H_{\delta}\setminus H_{\delta+1}$ for an unoccupied 
index $\delta$. We set $\alpha_k=\delta$ and $e'_{\alpha_k}= 
e''_k+e'_{\alpha_m}+e'_{\alpha_l}+\dots +e'_{\alpha_s}$. 

As a result, we obtain a set of vectors $E'=\{e'_{\alpha_n}:n\in \omega\}$ 
such that  
$e'_{\alpha_n}\in H_{\alpha_n}\setminus H_{\alpha_n+1}$ and 
$\langle e'_{\alpha_0}, \dots, e'_{\alpha_n}\rangle =
\langle e''_0, \dots, e''_n\rangle $ for every $n\in \omega$. The 
latter means that $E'$ spans $G$, because so does the basis 
$E''$. 

Formally, it may happen that not all of the indices $\alpha\in I$ 
are occupied, that is, $\{\alpha_n: n\in\omega\}=J\subsetneq I$. In 
this case, we take arbitrary  vectors $e'_\alpha\in H_\alpha\setminus H_{\alpha+1}$ for $\alpha\in 
I\setminus J$ and put them to $E'$. The set $E'$ thus enlarged 
satisfies condition $(\star)$ and is therefore linearly independent; 
thus, it cannot differ from the initial $E'$, because the latter 
spans $G$, and $J$ in fact coincides with $I$, i.e., 
$E'=\{e'_{\alpha_n}:n\in \omega\}=\{e'_\alpha:\alpha\in I\}$. 
This completes the proof of the lemma.  
\end{proof}

Theorem~\ref{subgroups} implies that 
any countable Boolean topological group containing a family of open subgroups with trivial 
intersection (in particular, any free Boolean topological or linear topological group on a 
countable space) has  a discrete or almost discrete closed basis. It turns out that this 
assertion holds for all countable Boolean topological groups. Namely, the following theorem is 
valid.

\begin{theorem}
Any countable Boolean topological group $G$ has either a discrete closed basis 
or a discrete basis for which $0$ is a unique limit point. 
In the latter case, $G$ is a continuous isomorphic image of the free Boolean topological group 
$B(\omega_{\mathscr F})$ on the space $\omega_{\mathscr F}$ associated with a filter $\mathscr F$ 
on $\omega$. 
\end{theorem}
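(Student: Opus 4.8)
The plan is to let $X$ be a basis of $G$ (which exists since every Boolean group is algebraically free on any basis), and to analyze the set of nonisolated points of $X$ in the topology of $G$. First I would observe that $X$ is necessarily closed in $G$; this is the Boolean analogue of the fact that the generating set is closed in $F(X)$ and $A(X)$, but for an \emph{arbitrary} countable Boolean topological group it should be argued directly. The key point is that for each $g\notin X$ one can separate $g$ from $X$ using a suitable continuous homomorphism into a finite (hence discrete) Boolean group: since $G$ is countable it is zero-dimensional, so neighborhoods of $0$ can be taken to be subgroups, and quotients by such open subgroups are countable discrete, giving enough continuous homomorphisms to finite Boolean groups to detect the length of any word. (Alternatively, one reduces to the case of a countable subgroup and the description of $B(X)$-type topologies; but the homomorphism argument is cleaner.) Granting this, $X$ is a closed subspace of the countable regular space $G$.

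Next I would show that $X$ has at most one nonisolated point. Suppose $a,b\in X$ are two distinct nonisolated points of $X$. Since $X$ is a basis, $X$ is linearly independent; it is also closed. Now I invoke the argument of Theorem~\ref{li}: take disjoint closed neighborhoods $U\ni a$, $V\ni b$ in $G$, form the sets $a+((V\setminus\{b\})\cap X)$ and $b+((U\setminus\{a\})\cap X)$, note that linear independence forces these two sets to be disjoint and that $a+b$ lies in the closure of each, so $\{a+b\}$ is a common limit point of two disjoint sets. These two sets are countable (as $G$ is countable), hence by Remark~\ref{ched} the fact that they are separated would give disjoint open neighborhoods — but they are \emph{not} separated; rather $a+b$ witnesses that their closures meet. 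This does not immediately produce a contradiction unless $G$ is extremally disconnected, so a different mechanism is needed here: instead I would translate the obstruction into the conclusion we actually want. Concretely, if $X$ has two nonisolated points $a$ and $b$, replace $X$ by the basis $X'=\{a\}\cup\{a+x:x\in X\setminus\{a\}\}$ (a ``translate'' of $X$ by $a$, which is again a basis since translation by a basis element is a linear automorphism over $\mathbb F_2$ fixing the basis up to this change); in $X'$ the points $a$ and $a+b$ are nonisolated and $0=a+a$ appears only if $a\in X'$... — this bookkeeping must be done carefully, and it is the main obstacle.

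The cleanest route past the obstacle is probably this: argue that among \emph{all} bases of $G$, we may choose one minimizing, in an appropriate sense, the set of nonisolated points, and then show a basis with two nonisolated points $a,b$ can always be improved. Given such $a,b$, set $e=a+b$; the map sending $a\mapsto e$, $b\mapsto b$ and fixing all other basis elements is a linear automorphism of $G$, hence carries $X$ to a new basis $X_1$. If $e=a+b$ happens to be an isolated point of $G$ then $X_1$ has strictly fewer nonisolated points than $X$; if not, iterate. To see that this process terminates (or that a limit basis works), one uses that $G$ is countable: one can diagonalize over an enumeration of $X$, performing at stage $n$ a transformation that makes the $n$th nonisolated point of the original basis disappear while not disturbing the finitely many already-fixed isolated coordinates, and take the resulting basis in the limit (the transformations have finite support on each coordinate, so the limit is a well-defined basis, exactly as in the inductive construction in the proof of Theorem~\ref{subgroups}). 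The outcome is a basis $X_\infty$ with at most one nonisolated point.

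Finally I would split into the two cases. If $X_\infty$ is discrete and closed, we are in the first alternative and are done. If $X_\infty$ has exactly one nonisolated point $*$ (which, after translating, we may take to be $0$), then $X_\infty=\omega_{\mathscr F}$ as a space, where $\mathscr F$ is the filter of punctured neighborhoods of $*$ in $X_\infty$; $\mathscr F$ is free because $X_\infty$ is a subspace of the Hausdorff space $G$ in which all other points are isolated, so no finite set is a neighborhood of $*$. The identity map $\omega_{\mathscr F}=X_\infty\hookrightarrow G$ is continuous, so by the universal property it extends to a continuous homomorphism $B(\omega_{\mathscr F})\to G$; since $X_\infty$ generates $G$, this homomorphism is onto, and since both groups are algebraically free Boolean on a countable set it is an algebraic isomorphism, hence a continuous isomorphic image as claimed. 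The step I expect to fight hardest with is the diagonalization that eliminates all but one nonisolated point while controlling the topology — making precise that each basis-changing move is ``local'' enough to preserve already-secured isolated coordinates and that the limiting family is genuinely a basis.
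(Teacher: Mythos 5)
There is a genuine gap, and it sits exactly where you predicted you would ``fight hardest.'' Two specific problems. First, your opening claim that an arbitrary basis $X$ of a countable Boolean topological group is closed in $G$ is false: in the (Graev) free Boolean group on a convergent sequence $x_n\to 0$, the standard basis $\{x_n\}$ accumulates at $0\notin X$; worse, the basis $\{x_1\}\cup\{x_1+x_n\colon n\ge 2\}$ accumulates at the \emph{nonzero} element $x_1$, so a basis can be non-discrete with a limit point other than $0$. The open-subgroup/homomorphism argument cannot rescue this, because no open subgroup separates $0$ from a basis converging to it. Consequently the theorem really is about \emph{constructing} a good basis, and your later ``after translating, we may take the limit point to be $0$'' step also fails, since a translate of a basis is not a basis. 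Second, the construction you propose --- iteratively applying elementary $\mathbb F_2$-linear automorphisms ($a\mapsto a+b$) to kill nonisolated points and passing to a limit basis --- comes with no mechanism forcing termination, no reason why $a+b$ should ever become isolated, and no argument that the limiting family is a basis with at most one nonisolated point. As written, the heart of the proof is missing.

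The paper's proof supplies precisely this missing mechanism, and it is worth knowing because it is quite different from anything in your sketch: every countable topological group admits a continuous norm $\|\cdot\|$; starting from any basis $\{e_n\}$ one defines $e'_{n+1}$ greedily as a word of \emph{minimum norm} in the alphabet $\{e'_1,\dots,e'_n,e_{n+1}\}$. A doubling estimate (for a reduced word $w=e'_{i_1}+\dots+e'_{i_n}$ with $i_1<\dots<i_n$ one has $\|e'_{i_{n-k}}\|\le 2^k\|w\|$) then shows that the set of words of reduced length exactly $n$ is discrete and the set of words of length at most $n$ is closed. Taking $n=1$ gives the dichotomy at once: the basis $\{e'_n\}$ is discrete, $\{e'_n\}\cup\{0\}$ is closed, so either the basis is closed and discrete or its unique limit point is $0$. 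Your final step --- extending the identity on the basis to a continuous surjective homomorphism $B(\omega_{\mathscr F})\to G$ which is an algebraic isomorphism because the basis is free --- is correct and coincides with the paper's conclusion, but it only becomes available once the special basis has been produced.
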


\begin{proof}
On any countable topological group a continuous norm can be defined 
(see, e.g., \cite{Graev1950} or~\cite{Arhangelskii79}). Take any basis $\{e_1, e_2, \dots\}$ in 
$G$ and let $\|\cdot\|$ be a continuous norm on $G$. Consider a new basis $\{e'_1, e'_2, 
\dots\}$ defined by induction as follows:
\begin{itemize} 
\item 
$e'_1= e_1$; 
\item 
if $n\in \mathbb N$ and $e'_1, e'_2, \dots, e'_{n}$ are already defined, then $e'_{n+1}$ is a 
word in the alphabet $\{e'_1, e'_2, \dots, e'_{n}, e_{n+1}\}$ with minimum norm 
(if there are several such words, then we take any of them). 
\end{itemize}

For $k\in \omega$,  we denote the set of all words in 
$\{e'_1, e'_2, \dots\}$ of reduced length precisely $k$ by $G_{=k}$ and 
use the old notation $G_{k}$ for the set of all words of reduced length at most $k$; thus, 
$G_{k}=\bigcup_{m\le k}G_{=m}$. 

\begin{lemma}
\label{propbases1}
Let $w=e'_{i_1} + e'_{i_2} +\dots + e'_{i_n}$, where $n, i_1, \dots, i_n\in \mathbb N$ and 
$i_1< i_2< \dots < i_n$. Then 
$$
\|e'_{i_{n-k}}\|\le 2^k \|w\|\qquad \text{for each}\quad k=0, \dots, n-1.
$$
\end{lemma}

\begin{proof}
We argue by induction on $k$. The required inequality for $k=0$ follows from the definition of 
$e'_{i_n}$. Suppose that $l>0$ and the inequality holds for all $k<l$. We have $\|e'_{i_n}\|\le 
\|w\|$, \dots, $\|e'_{i_{n-l+1}}\|\le 2^{l-1}\|w\|$; by the triangle inequality for norms, we have 
$$
\|e'_{i_{n-l+1}}+e'_{i_{n-l+2}}+\dots +e'_{i_{n}}\|\le (2^{l-1}+2^{l-2}+\dots+1)\|w\|,
$$
and again applying the triangle inequality, we obtain 
$$
\|e'_{i_{1}}+e'_{i_{2}}+\dots +e'_{i_{l}}\|\le \|w\|+
\|e'_{i_{n-l+1}}+e'_{i_{n-l+2}}+\dots +e'_{i_{n}}\|\le 2^k\|w\|.
$$
\end{proof}

\begin{lemma}
\label{propbases2}
Each set $G_{=n}$ is discrete in $G$ with respect to the norm $\|\cdot\|$.
\end{lemma}

\begin{proof}
For $n=0$, the assertion is trivial. Suppose that $n>0$ and take any $w\in G_n$; we have $w= 
e'_{i_1}+\dots + e'_{i_n}$, where $i_1<\dots< i_n$. Let  $d= 
\min_{k<n}\left\{\frac{\|e'_{i_k}\|}{2^{2n}}\right\}$. Suppose that $w'\in G_n$, i.e., $w' = 
e'_{j_1}+\dots + e'_{j_n}$ for some $j_1<\dots< j_n$, and $w'$ belongs to the $d$-neighborhood of 
$w$ with respect to the norm $d$, i.e., $\|w+w'\|<d$. If the word $w''=w+w'$ 
contains a letter $x$, then $x$ equals $e'_{i_r}$ or $e'_{j_r}$ for some $r\le n$, and the 
length of $w''$ does not exceed $2n$; thus, by Lemma~\ref{propbases1}, we have $\|x\|\le 
2^{2n}\|w''\|$, whence $\|w''\|\ge \frac{\|x\|}{2^{2n}}$. By the definition of $d$ 
$x$ cannot equal $e'_{i_r}$, i.e., all letters $e'_{i_r}$ must be cancelled in the word 
$w''=w+w'$, which means that $w = w'$. Since the norm $\|\cdot\|$ on $G$ is continuous, it 
follows that the $d$-neighborhood of $w$ contains no elements of $G_{=n}$ except $w$. 
\end{proof}

\begin{lemma}
\label{propbases3}
Each set $G_{n}$ is closed in $G$ with respect to the norm $\|\cdot\|$. 
\end{lemma}

\begin{proof}
As in the proof of Lemma~\ref{propbases2}, suppose that $n>0$, take any $w= 
 e'_{i_1}+\dots + e'_{i_n}\in G_n$, where $i_1<\dots< i_n$, and let  $d= 
\min_{k<n}\left\{\frac{\|e'_{i_k}\|}{2^{2n}}\right\}$. We show that,  for $k< n$, the 
$d$-neighborhood of $w$ with respect to $d$ does not intersect $G_{k}$. Take any $w' 
= e'_{j_1}+\dots + e'_{j_k}$, where $j_1<\dots< j_k$. The word $w''=w+w'$ contains at least one 
letter $e'_{i_r}$ with $r\le n$, because $k<n$, and the length of $w''$ does not exceed $2n$; by 
Lemma~\ref{propbases1}, we have $\|e'_{i_r}\|\le 2^{2n}\|w''\|$, whence $\|w''\|\ge 
\frac{\|e'_{i_r}\|}{2^{2n}}\ge d$. This means that the 
$d$-neighborhood of $w$ contains no elements of $G_{k}$. 
\end{proof}

We proceed to prove the theorem. 
Note that the words of length~1 with respect to any given basis are precisely the basis elements. 
Therefore, by Lemma~\ref{propbases2}, the set $G_{=1} = \{e'_1, e'_2, \dots\}$ is discrete in 
$G$, and by Lemma~\ref{propbases3} the set $G_{1}=\{e'_1, e'_2, \dots\}\cup \{0\}$ is 
closed. This means that either $\{e'_1, e'_2, \dots\}$ is closed (and discrete) in $G$ or the 
subspace $\{e'_1, e'_2, \dots\}\cup \{0\}$ of $G$ is homeomorphic to a space of 
the form $\omega_{\mathscr F}$, where  $\mathscr F$ is a filter on $\omega$, and the homeomorphism 
$\{e'_1, e'_2, \dots\}\cup \{0\}\cong \omega_{\mathscr F}$ induces an algebraic isomorphism between 
$G$ and $B(\omega_{\mathscr F})$. Since the free group topology of $B(\omega_{\mathscr F})$ is the 
strongest group topology inducing the given topology on $\omega_{\mathscr F}$, it follows that the 
topology of $G$ (which induces the same tology on the topological copy $\{e'_1, e'_2, \dots\}\cup 
\{0\}$ of $\omega_{\mathscr F}$ in $G$) is coarser than that of $B(\omega_{\mathscr F})$. 
\end{proof}

\begin{remark}
Lemma~\ref{propbases1} (with obvious modifications) and Lemmas~\ref{propbases2} 
and~\ref{propbases2} are valid for any normed countable-dimensional vector space 
over a finite field.
\end{remark}

Spaces of the form $\omega_{\mathscr F}$ are one of the rare examples where the free Boolean 
topological group is naturally embedded in the free and free Abelian topological groups 
as a closed subspace. The embedding of $B(\omega_{\mathscr F})$ into 
$A(\omega_{\mathscr F})$ is defined simply by $x_1+x_2+ \dots +x_n\mapsto x_1+x_2+ \dots +x_n$ 
(for the Graev free groups, which coincide with Markov ones for such spaces), and 
the embedding into $F(\omega_{\mathscr F})$ is $x_1+x_2+ \dots +x_n\mapsto x_1x_2 \dots x_n$, provided 
that $x_1<x_2<\dots<x_n$ (in $\omega$). These embeddings take $B(\omega_{\mathscr F})$ to 
\begin{multline*}
A=\{x_1+x_2+ \dots +x_n=(x_1-*)+(x_2-*)+ \dots +(x_n-*)\colon \\ 
n\in \mathbb N,\ x_i \in \omega\} \subset A(\omega_{\mathscr F})
\end{multline*}
and 
\begin{multline*}
F=\{x_1x_2 \dots x_n=x_1*^{-1}x_2*^{-1} \dots x_n*^{-1}\colon\\ 
n\in \mathbb N,\ x_i \in \omega,\ x_1<x_2< \dots<x_n\}\subset F(\omega_{\mathscr F}).
\end{multline*}
The topologies induced on $A$ and $F$ by $A(\omega_{\mathscr F})$ and $F(\omega_{\mathscr F})$ 
are easy to describe; the restrictions of base neighborhoods of the zero (identity) element 
to these sets are determined by sequences of open covers of 
$\omega_{\mathscr F}$ (i.e., of neighborhoods of the nonisolated point $*$) in the same manner 
as in description~II (see~\cite{VINITI}). The rigorous proof of the homeomorphism of $A$, $F$, 
and $B(\omega_{\mathscr F})$ is obvious but tedious, and we omit it.  

\section{Free Boolean Topological Groups and Forcing}
\label{secforcing}

As mentioned at the end of Section~\ref{secdescriptions}, for any filter 
$\mathscr F$, the free Boolean group on $\omega_{\mathscr F}$ is simply $[\omega]^{<\omega}$ with 
zero $\emptyset$. Generally, a topology on any set is a partially ordered 
(by inclusion) family of subsets. Partial orderings of subsets of $[\omega]^{<\omega}$ have been 
extensively studied in forcing, and countable Boolean topological groups turn out to be closely 
related to them. In this section we shall try to give an intuitive explanation of this 
relationship. The basic definitions and facts related to forcing can be found in Kunen's 
book~\cite{Kunen} and Jech's book~\cite{Jech}.

Forcing is a method for extending models of set theory so as to include an object 
with desired properties. This is done by means of a partially ordered set $(\mathbb P, \le)$,   
referred to as a \emph{notion of forcing}, and a {generic set} $G$ consisting of compatible 
elements of $\mathbb P$ and meeting all dense subsets of $\mathbb P$  
(such sets never belong to the ground model, except in trivial cases of no interest). 
The method of forcing yields the minimal extension of $M$ containing $G$, called the 
\emph{generic extension}. 
The object with desired properties, which is the goal of the construction, 
is often simply~$\bigcup G$ or~$\bigcap G$. Figuratively, the desired propeties are, 
so to speak, a frame for an infinite picture, the elements of 
$\mathbb P$ are finite jigsaw pieces (which can never be fit together to make a picture 
large enough in the space where $\mathbb P$ lives), and $G$ is a set of compatible 
pieces that form a picture filling the frame but in a higher-dimensional space. 
Note that the design of $\mathbb P$ is as important as 
that of $G$, because $\mathbb P$ is responsible for preventing indesirable destructive effects 
of forcing, such as collapse of cardinals. 

Given two conditions $p, q \in \mathbb P$, $p$ is said to be \emph{stronger} 
than $q$ if $p \le q$. A partially ordered set $(\mathbb P, \le)$ is separative if, whenever 
$p \not\le  q$, there exists an $r \le p$ which is incompatible with $q$. Thus, any topology 
is a generally nonseparative notion of forcing, and the family of all regular open sets 
in a topology is a separative notion of forcing. Any separative forcing notion $(P, \le)$ 
is isomorphic to a dense subset of a complete Boolean algebra. Indeed,  
consider the set $\mathbb P\downarrow  p = \{q : q \le p\}$ for each $p \in \mathbb P$. 
The family $\{X \subset \mathbb P: (\mathbb P \downarrow p) 
\subset  X \text{ for every } p \in X\}$ generates a topology on $\mathbb P$. 
The complete Boolean algebra mentioned above is the algebra $\operatorname{RO}(\mathbb P)$
of regular open sets in this topology.

Two notions of forcing $\mathbb P$ and $\mathbb Q$ are said to be 
\emph{forcing equivalent} if the algebras 
$\operatorname{RO}(\mathbb P)$ and $\operatorname{RO}(\mathbb Q)$ are
isomorphic, or, equivalently, if $\mathbb P$ can be densely embedded in 
$\mathbb Q$ and vice versa (which means that $\mathbb P$  and $\mathbb Q$ produce the same 
generic extensions).

In the context of free Boolean groups on filters most interesting are two 
well-known notions of forcing, 
Mathias forcing and Laver forcing relativized to filters on~$\omega$. 

In \emph{Mathias forcing} relative to a filter $\mathscr F$ 
the forcing poset, denoted $\mathbb M(\mathscr F)$, 
is a pair $(s, A)$ consisting 
of a finite set $s\subset \omega$ and an (infinite) set $A\in \mathscr F$  
such that every element of $s$ is less than every element of $A$ in the ordering of $\omega$ (i.e., 
$\max s<\min A$). 
A condition 
$(t, B)$ is stronger than $(s,A)$ ($(t,B) \le (s,A)$) if $s\sqsubset t$, $B\subset A$, 
and $t \setminus s \subset  A$.

The poset in Laver forcing consists of subsets of the set $\omega^{< \omega}$ of ordered finite 
sequences in $\omega$. However, it is more convenient for our purposes to consider its 
modification consisting of subsets of $[\omega]^{< \omega}$. Thus, we restrict the Laver forcing 
poset to the set $\omega^{\uparrow < \omega}$ of strictly increasing finite sequences 
(this restricted poset is forcing equivalent to the original one) and note 
that the latter is naturally identified with $[\omega]^{< \omega}$. Below we give the 
definition of the corresponding modification of Laver forcing.

The definition of Laver forcing uses the notion of a Laver tree. 
A \emph{Laver tree} is a set $p$ of finite subsets of 
$\omega$
such that 
\begin{enumerate}
\item[(i)] $p$ is a tree (i.e., if $t\in p$, then $p$ contains any initial segment of 
$t$),
\item[(ii)] $p$ has a stem, i.e., a maximal node $s(p) \in p$ 
such that $s(p) \sqsubset t$ or $t \sqsubset  s(p)$ for all $t \in p$, and 
\item[(iii)] if $t \in p$ and 
$s(p) \sqsubset t$, 
then the set $\operatorname{succ}(t) = \{n\in \omega\colon n > \max t, \ t\cup \{n\} \in p\}$ 
is infinite.
\end{enumerate}
In \emph{Laver forcing} relative to $\mathscr F$ the poset, denoted 
$\mathbb L(\mathscr F)$, is the set of Laver trees $p$ such that 
$\operatorname{succ}(t)\in \mathscr F$ for any  
$t \in p$  with $s(p) \sqsubset t$,  
ordered by inclusion.

The Mathias and Laver forcings $\mathbb M (\mathscr F)$ and $\mathbb L(\mathscr F)$ 
determine two natural topologies on $[\omega]^{<\omega}$: 
the \emph{Mathias topology} $\tau_M$ generated by the base 
\begin{multline*}
\{[s,A] \colon 
s \in [\omega]^{<\omega},\ A\in \mathscr F,\ \max s<\min A\},\\
\text{where}\quad 
[s,A] = \{t \in [\omega]^{<\omega}\colon 
s \sqsubset t, \ t\setminus s \subset A\},
\end{multline*} 
and the \emph{Laver topology} $\tau_L$ generated by all sets $U\subset [\omega]^{<\omega}$ 
such that 
$$
t\in U \implies \{n > \max t\colon t\cup \{n\} \in U\}\in \mathscr F.
$$

It is easy to see that the Mathias topology is nothing but the topology of the free Boolean 
linear topological 
group on $\omega_{\mathscr F}$ (recall that linear groups are those with topology 
generated by subgroups): a base of neighborhoods of zero $\emptyset$ is formed by the sets 
$[\emptyset, A]$ with $A\in \mathscr F$, that is, by all subgroups generated by 
elements of $\mathscr F$. 

The neighborhoods of zero in the Laver topology are not so easy to describe explicitly; 
their recursive definition immediately follows from that given above 
for general open sets (the only additional condition $\emptyset \in U$ must be added). 
Thus, $U$ is an open neighborhood of zero if, first, $\emptyset \in U$; by definition, $U$ 
must also contain all $n\in A(\emptyset)$ for some $A(\emptyset)\in \mathscr F$ (moreover, $U$ 
may contain no other 
elements of size~1); for each of these $n$, there must exist an $A(n)\in \mathscr F$ such that 
$\min A(n)> n$ and 
$U$ contains all $\{n, m\}$ with $m\in A(n)$ (moreover, $U$
may contain no other elements of size~2); 
for any such $\{n, m\}$ (note that $m> n$) 
there must exist an $A(\{n,m\})\in \mathscr F$ such that $\min A(\{n, m\})>m$ 
and $U$ contains all $\{n, m, l\}$ with $l\in A(\{n,m\})$, and so on. Thus, 
each neighborhood of zero is determined by a family $\{A(s)\colon s\in [\omega]^{<\omega}\}$ 
of elements of $\mathscr F$. Clearly, the topology $\tau_L$ is invariant with respect to 
translation by elements of $[\omega]^{<\omega}$; 
upon a little reflection it 
becomes clear that $\tau_L$ is the maximal invariant topology on $[\omega]^{<\omega}$ in which the 
filter $\mathscr F$ converges to zero.\footnote{An invariant topology is a topology with respect 
to which the group operation is separately continuous; Boolean groups with an invariant topology 
are precisely quasi-topological Boolean groups. The convergence of $\mathscr F$ to zero 
means that $\mathscr F$ contains all neighborhoods of zero in $\tau_L$ restricted to $\omega$, 
i.e., that $\tau_L$ induces the initially given topology on $\omega_{\mathscr F}$.}
Since the free group topology is invariant as well, 
it is coarser than $\tau_L$. 

The Mathias topology is, so to speak, the uniform version of the Laver topology: a neighborhood 
of zero in the Laver topology 
determined by  a family $\{A(s)\in \mathscr F\colon s\in [\omega]^{<\omega}\}$ 
is open in the Mathias topology if and only if there exists 
a single $A\in \mathscr F$ such that $A(s)=A\setminus\{0, 1, \dots, \max s\}$ for each $s$. 
(In~\cite{Brendle} the corresponding relationship between Mathias and Laver forcings 
was discussed from a purely set-theoretic point of view.) Hence $\tau_M\subset \tau_L$. 

The topology of the free Boolean topological group on $\omega_{\mathscr F}$ 
occupies an intermediate position between the Mathias and the Laver topology: it is not so 
uniform as the former but more uniform than the latter. Neighborhoods of zero are determined not 
by  a single element of the filter (like in the Mathias topology) but by a family of elements of 
$\mathscr F$ assigned to $s\in [\omega]^{<\omega}$ (like in the Laver topology), 
but these elements depend only on the lengths of $s$. 

The following theorem shows that 
the Laver topology is a group topology only for 
special filters. This theorem  was proved in 2007 by Egbert Th\"ummel, who 
kindly communicated it, together with a complete proof, to the author. The symbols 
$\tau_{\mathrm{free}}$ and $\tau_{\mathrm{indlim}}$ in its statement denote the topology
of the free topological group $B(\omega_{\mathscr F})$ and the inductive limit topology of 
$B(\omega_{\mathscr F})$, respectively. We conventionally use the term \emph{selective filter} 
for a filter satisfying any of the equivalent conditions (iii)--(v) in 
Theorem~\ref{ramseychar}. Recall that, according to condition~(iv), if $\mathscr F$ is 
a selective filter, then any family $\{A_i: i \in\omega\}$, where $A_i\in \mathscr F$, has a 
\emph{diagonal intersection} in $\mathscr F$, that is, there exists 
a set $D\in \mathscr F$ such that $j \in  A_i$ whenever 
$i, j \in D$ and $i < j$.

\begin{theorem}[Th\"ummel, 2007]
\label{Thuemmel}
For any filter on $\omega$, the following conditions are equivalent:
\begin{enumerate}
\item[(i)] $\mathscr F$ is selective; 
\item[(ii)] $\tau_M = \tau_{\mathrm{free}} = \tau_{\mathrm{indlim}} = \tau_L$\textup;
\item[(iii)] $\tau_L$ is a group topology;
\item[(iv)] for any sequence of $A_i\in \mathscr F$ with $\min A_i>i$, $i\in \omega$, 
the set $U=\{\emptyset\}\cup\bigcup_{i\in \omega} [i, A_i]$ is open in $\tau_{\mathrm{free}}$.
 \end{enumerate}
\end{theorem}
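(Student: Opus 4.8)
The plan is to prove the cycle (i)$\Rightarrow$(ii)$\Rightarrow$(iii)$\Rightarrow$(iv)$\Rightarrow$(i). First I would record the a priori inclusions $\tau_M\subseteq\tau_{\mathrm{free}}\subseteq\tau_{\mathrm{indlim}}\subseteq\tau_L$. The first holds because the free linear group topology is coarser than the free group topology; the second because any $\tau_{\mathrm{free}}$-open set meets each $B_n(\omega_{\mathscr F})$ in a relatively open set; the third --- the only one needing an argument --- is proved thus: if $U\in\tau_{\mathrm{indlim}}$ and $t\in U$ has length $k$, then the map $n\mapsto t\triangle\{n\}$, $*\mapsto t$, is a continuous map of $\omega_{\mathscr F}$ into $B_{k+1}(\omega_{\mathscr F})$ (with its $\tau_{\mathrm{free}}$-induced topology), being the embedding $\omega_{\mathscr F}\hookrightarrow B(\omega_{\mathscr F})$ followed by the translation by $t$, which is a homeomorphism of $(B(\omega_{\mathscr F}),\tau_{\mathrm{free}})$; hence the preimage of the relatively open set $U\cap B_{k+1}(\omega_{\mathscr F})$ is a neighborhood of $*$, so $\{n>\max t:t\cup\{n\}\in U\}\in\mathscr F$, i.e.\ $U\in\tau_L$. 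Since $\tau_{\mathrm{free}}\subseteq\tau_L$ is already noted in the text, proving (ii) reduces to establishing the single inclusion $\tau_L\subseteq\tau_M$.

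For (i)$\Rightarrow$(ii), assume $\mathscr F$ is selective and let $U$ be a $\tau_L$-neighborhood of $\emptyset$; as recalled just before the theorem, for each $s\in U$ the set $A(s)=\{n>\max s:s\cup\{n\}\in U\}$ belongs to $\mathscr F$. Put $A^*_m=\bigcap\{A(s):s\in U,\ s\subseteq\{0,1,\dots,m\}\}$; this is a finite intersection, hence in $\mathscr F$. By condition~(iv) of Theorem~\ref{ramseychar} the $\omega$-indexed family $\{A^*_m\}$ has a diagonal intersection $D\in\mathscr F$, so $j\in A^*_i$ whenever $i<j$ lie in $D$. Set $A=D\cap A(\emptyset)\in\mathscr F$. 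For any $t=\{t_0<\dots<t_{k-1}\}\subseteq A$ one shows $\{t_0,\dots,t_l\}\in U$ by induction on $l$: the base case is $t_0\in A(\emptyset)$, and if $s=\{t_0,\dots,t_{l-1}\}\in U$ then $s\subseteq\{0,\dots,t_{l-1}\}$ while $t_{l-1}<t_l$ lie in $D$, so $t_l\in A^*_{t_{l-1}}\subseteq A(s)$, i.e.\ $\{t_0,\dots,t_l\}\in U$. Hence $[\emptyset,A]\subseteq U$, so $U$ is a $\tau_M$-neighborhood of $\emptyset$; since $\tau_M\subseteq\tau_L$ and both topologies are translation invariant, $\tau_M=\tau_L$, and (ii) follows.

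Now (ii)$\Rightarrow$(iii) is immediate, since $\tau_L=\tau_{\mathrm{free}}$ is a group topology. For (iii)$\Rightarrow$(iv): if $\tau_L$ is a group topology then it induces the given topology on $\omega_{\mathscr F}$ and therefore is contained in the finest such group topology $\tau_{\mathrm{free}}$, so $\tau_L=\tau_{\mathrm{free}}$; and every $U=\{\emptyset\}\cup\bigcup_i[i,A_i]$ with $\min A_i>i$ is $\tau_L$-open, because for $\emptyset\ne t\in U$ with $i=\min t$ one has $t\setminus\{i\}\subseteq A_i$, whence $\{n>\max t:t\cup\{n\}\in U\}=A_i\setminus\{0,\dots,\max t\}\in\mathscr F$ (the case $t=\emptyset$ being trivial), so $U$ is $\tau_{\mathrm{free}}$-open. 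For (iv)$\Rightarrow$(i), given arbitrary $A_i\in\mathscr F$ replace them by $A'_i=A_i\setminus\{0,\dots,i\}$ (so $\min A'_i>i$) and apply (iv): the set $U'=\{\emptyset\}\cup\bigcup_i[i,A'_i]$ is $\tau_{\mathrm{free}}$-open, so by description~III (in its form for filter spaces) it contains a set $\{t\in[\omega]^{<\omega}:\sum_{m\in t}\delta(m)<1\}$ with $\delta(m)=d(m,*)$ for a continuous pseudometric $d$ on $\omega_{\mathscr F}$; thus $D_\varepsilon:=\{m:\delta(m)<\varepsilon\}\in\mathscr F$ for every $\varepsilon>0$. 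Then $D:=D_{1/2}\in\mathscr F$, and for $i<j$ in $D$ the set $\{i,j\}$ satisfies $\delta(i)+\delta(j)<1$, hence $\{i,j\}\in U'$, which forces $j\in A'_i\subseteq A_i$; thus $D$ is a diagonal intersection of $\{A_i\}$ in $\mathscr F$, so $\mathscr F$ satisfies condition~(iv) of Theorem~\ref{ramseychar}, i.e.\ is selective.

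I expect the main obstacle to be (i)$\Rightarrow$(ii): passing from the \emph{tree} $\{A(s):s\in[\omega]^{<\omega}\}$ coding a Laver neighborhood to a \emph{single} filter set $A$ with $[\emptyset,A]\subseteq U$. The device that makes it work is to collapse the tree into the $\omega$-indexed sequence $\{A^*_m\}$ --- legitimate precisely because only finitely many $s$ lie below a given $m$ --- and then invoke selectivity once, in the diagonal-intersection form. The only other point requiring care is the inclusion $\tau_{\mathrm{indlim}}\subseteq\tau_L$, which lets $\tau_{\mathrm{indlim}}$ be squeezed out ``for free'' and sidesteps any separate analysis of when $B(\omega_{\mathscr F})$ carries the inductive limit topology.
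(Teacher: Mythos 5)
Your proposal is correct and follows essentially the same route as the paper: the same cycle (i)$\Rightarrow$(ii)$\Rightarrow$(iii)$\Rightarrow$(iv)$\Rightarrow$(i), the same a priori chain $\tau_M\subseteq\tau_{\mathrm{free}}\subseteq\tau_{\mathrm{indlim}}\subseteq\tau_L$, and, crucially, the same device for (i)$\Rightarrow$(ii) of collapsing the tree $\{A(s)\}$ into the $\omega$-indexed sequence $A^*_m=\bigcap\{A(s):\max s\le m\}$ and applying the diagonal-intersection form of selectivity once. The only (immaterial) deviations are your direct translation-continuity argument for $\tau_{\mathrm{indlim}}\subseteq\tau_L$ in place of the paper's remark that $\tau_L$ is the inductive limit of its restrictions to the $B_n$, and your use of the pseudometric description~III in (iv)$\Rightarrow$(i) where the paper takes a neighborhood $V$ with $V+V\subseteq U$.
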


Th\"ummel mentioned that, in his proof of the implication (i)$\implies$(ii) in this theorem, 
he used an argument known from forcing theory. The proof given below only slightly differs from 
Th\"ummel's. 

\begin{proof}
First, note that $\tau_M \subset \tau_{\mathrm{free}} \subset \tau_{\mathrm{indlim}} 
\subset \tau_L$. Indeed, the 
first two inclusions are obvious (recall that $\tau_M$ coincides with the free linear group 
topology), and the third one follows from Theorem~\ref{prop-linear} (or from the inclusion 
$\tau_{\mathrm{free}}\subset \tau_L$ noted above) and the observation that $\tau_L$ is the 
inductive limit of its restrictions to $B_n(\omega_{\mathscr F})$. 

Thus, to prove the implication (i)~$\Rightarrow$~(ii), it suffices to show that $\tau_M = \tau_L$ 
for any selective filter. Let $U$ be a neighborhood of $\emptyset$ in $\tau_L$. For each 
$i\in \omega$, we set 
$$
A_i=\bigcap\bigl\{\{n>\max s\colon s\cup \{n\}\in U\}\colon  s\in U,\ \max s \le i\bigr\}.
$$
Since the number of $s\in [\omega]^{<\omega}$ with $\max s\le i$ is finite, it follows that 
$A_i\in \mathscr F$. Take a diagonal intersection $D\in \mathscr F$ of the family 
$\{A_i\colon i\in \omega\}$. 
We can assume that $D\subset A_0$. Clearly, 
$[\emptyset, D]\subset U$, whence $U\in \tau_M$.

The implication (ii)~$\Rightarrow$~(iii) is trivial. 

Let us prove (iii)~$\Rightarrow$~(iv). Note that it follows from (iii) that 
$\tau_{\mathrm{free}}=\tau_L$, because $\tau_{\mathrm{free}}\subset \tau_L$ and 
$\tau_{\mathrm{free}}$ is the strongest group topology inducing the initially given topology on 
$\omega_{\mathscr F}$. It remains to note that any set of the form 
$\{\emptyset\}\cup\bigcup_{i\in \omega} [i, A_i]$, where $A_i\in \mathscr F$ and $\min A_i>i$, 
is open in $\tau_L$. 

We proceed to the last implication (iv)~$\Rightarrow$~(i). Take any 
sequence of $A_i\in \mathscr F$ with $\min A_i> i$, $i\in \omega$, and consider the set 
$U$ defined as in (iv). Since this is an open neighborhood of zero in the group topology 
$\tau_{\mathrm{free}}$, there exists an open neighborhood $V$ of zero (in $\tau_{\mathrm{free}}$) 
such that $V+V\subset U$. The set $D=\{i\in \omega\colon i\in V\}$ belongs to $\mathscr F$ 
(because $\tau_{\mathrm{free}}$ induces the initially given topology on $\omega_{\mathscr F})$ and 
is a diagonal intersection of $\{A_i\colon i \in\omega\}$. Indeed, if $i<j$ and $i, j\in D$, then
$i+j=\{i, j\}\in U$; thus, there exists a $k$ for which $\{i, j\}\in [k, A_k]$. 
The conditions $\min A_k >k$ and $i< j$ imply $k=i$. Therefore, $j\in A_i$. 
\end{proof}

Theorem~\ref{Thuemmel} is worth comparing with 
Ihoda and Shelah's theorem that  if $\mathscr F$ is a Ramsey ultrafilter, 
then $\mathbb M(\mathscr F)$ is forcing equivalent to
$\mathbb L(\mathscr F)$~\cite[Theorem~1.20~(i)]{JuSh89} (Mathias forcing is referred to as Silver 
forcing in~\cite{JuSh89}). 

Th\"ummel also noticed that Theorem~\ref{Thuemmel}, combined with Sirota's construction of a 
CH example of an extremally disconnected group, implies that, 
\emph{given a filter $\mathscr F$ on $\omega$, the free Boolean 
topological group $B(\omega_{\mathscr F})$ is extremally disconnected if and only if 
$\mathscr F$ is a Ramsey ultrafilter}. Below 
we prove this statement in a formally stronger form: we do not assume $\mathscr F$ to be an 
ultrafilter in the \emph{if} part. (Amazingly, the most immediate consequence of this 
stronger statement is that is is not actually stronger.)

Th\"ummel has never published these results, and 
the statement that, for an ultrafilter $\mathscr U$ on $\omega$, 
$B(\omega_{\mathscr U})$ is extremally disconnected if and only if 
$\mathscr U$ is Ramsey was rediscovered by Zelenyuk, 
who included it, among other impressive results, in his book~\cite{ProtQ} 
(see Theorem~5.1 in~\cite{ProtQ}). 

\begin{theorem}
\label{Ramsey}
\begin{enumerate}
\item[(i)] For any selective filter $\mathscr F$ on $\omega$, the free Boolean linear 
topological group $B^{\mathrm{lin}}(\omega_{\mathscr F})$ 
(and hence the free Boolean topological group $B(\omega_{\mathscr F})$) is extremally disconnected.
\item[(ii)] If $\mathscr F$ is a filter on $\omega$ for which 
$B^{\mathrm{lin}}(\omega_{\mathscr F})$ or $B(\omega_{\mathscr F})$ is extremally disconnected, 
then $\mathscr F$ is a Ramsey ultrafilter.
\end{enumerate}
\end{theorem}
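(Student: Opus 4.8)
The plan is to prove (i) by reducing, via Theorem~\ref{Thuemmel}, to a Mathias-forcing-style combinatorial statement about selective filters, and to prove (ii) by exploiting that a countable extremally disconnected group is hereditarily extremally disconnected. For (i): since $\mathscr F$ is selective, Theorem~\ref{Thuemmel} gives $\tau_M=\tau_{\mathrm{free}}=\tau_{\mathrm{indlim}}=\tau_L$ on $[\omega]^{<\omega}$, so it is enough to show that $(B(\omega_{\mathscr F}),\tau_M)=B^{\mathrm{lin}}(\omega_{\mathscr F})$ is extremally disconnected (the claim for $B(\omega_{\mathscr F})$ then follows, as $\tau_{\mathrm{free}}=\tau_M$). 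A base at $\varnothing$ in $\tau_M$ is the family of open subgroups $N_A=[A]^{<\omega}$, $A\in\mathscr F$, so, by translation invariance, extremal disconnectedness amounts to: for every $\tau_M$-open $W$ with $\varnothing\in\overline W$ there is $A\in\mathscr F$ with $N_A\subseteq\overline W$ (the case $\varnothing\in W$ being trivial, assume $\varnothing\notin W$). Now $\varnothing\in\overline W$ means precisely that every $C\in\mathscr F$ contains some nonempty $s\in W$, and openness of $W$ means each $s\in W$ carries a full cone $s+N_{B_s}\subseteq W$ with $B_s\in\mathscr F$, $\max s<\min B_s$. A short manipulation — choosing $s\in W$ that agrees with a prescribed finite $t$ off a prescribed $C\in\mathscr F$ — reduces the goal to finding $A\in\mathscr F$ such that $W$ is \emph{predense below the Mathias condition} $(\varnothing,A)$: for every finite $u\subseteq A$ and every $C\in\mathscr F$ with $\max u<\min C$ there is $s\in W$ with $u\subseteq s$ and $s\setminus u\subseteq C$.

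Producing such an $A$ is, I expect, the main obstacle, and it is carried out by a fusion argument patterned on the combinatorial core of Mathias forcing over a Ramsey ultrafilter. One builds $A=\{a_0<a_1<\dots\}$ recursively together with a decreasing sequence $C_0\supseteq C_1\supseteq\dots$ in $\mathscr F$, arranging at stage $n$ that the (finitely many) predensity requirements attached to the finite subsets $u$ of $\{a_0,\dots,a_{n-1}\}$ are met inside $C_n$; the finitely many shrinkings at each stage are amalgamated into a single element of $\mathscr F$, and the selectivity of $\mathscr F$ — in the form of condition~(iv) or~(v) of Theorem~\ref{ramseychar}, i.e.\ the existence of diagonal intersections — is exactly what is needed to pass from the whole sequence $\{C_n\}$ to one $A\in\mathscr F$ absorbing all the requirements simultaneously. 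A mere $P$-filter would not suffice here.

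For (ii), assume $B^{\mathrm{lin}}(\omega_{\mathscr F})$ or $B(\omega_{\mathscr F})$ is extremally disconnected. The group is countable, hence hereditarily extremally disconnected (a countable regular space is hereditarily normal, so Remark~\ref{hed} applies); since $\omega_{\mathscr F}$ embeds in it as the closed subspace of words of length $\le 1$, the space $\omega_{\mathscr F}$ is extremally disconnected, whence $\mathscr F$ is an ultrafilter. To see that it is Ramsey it suffices, by Theorem~\ref{ramseychar}, to find a selector in $\mathscr F$ for every partition $\{C_n:n\in\omega\}$ of $\omega$ with all $C_n\notin\mathscr F$. Suppose there is none; then every $D\in\mathscr F$ meets some $C_n$ in at least two points and meets at least two of the pieces. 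Let $W_0$ (resp.\ $W_1$) be the set of those $s\in[\omega]^{<\omega}$ with $|s|\ge 2$ whose two smallest elements lie in a single piece (resp.\ in two different pieces). Then $W_0\cap W_1=\varnothing$; both are open, because enlarging $s$ by elements above $\max s$ leaves its two smallest elements unchanged, so each $s$ carries the cone $s+N_{\{k:k>\max s\}}$ and $\{k:k>\max s\}$ is cofinite, hence in $\mathscr F$; and $\varnothing$ lies in the closure of each, since every $D\in\mathscr F$ contains a two-element subset of a single $C_n$ and also a two-element set hitting two distinct pieces. This contradicts extremal disconnectedness. (The same $W_0,W_1$ are $\tau_{\mathrm{free}}$-open and have $\varnothing$ in their $\tau_{\mathrm{free}}$-closures by description~III of the topology, so the argument covers $B(\omega_{\mathscr F})$ as well; applied to two-piece partitions it also re-proves that $\mathscr F$ must be an ultrafilter.)
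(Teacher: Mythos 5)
Part (i) of your proposal has a genuine gap at its combinatorial core. The reduction is fine: by Theorem~\ref{Thuemmel} it suffices to treat $\tau_M$, by homogeneity it suffices to work at $\emptyset$, and the statement to be proved is exactly that for every $\tau_M$-open $W$ with $\emptyset\in\overline W$ there is $A\in\mathscr F$ with $[A]^{<\omega}\subseteq\overline W$ (your ``predensity below $(\emptyset,A)$''). But that capturing lemma \emph{is} the theorem, and you do not prove it --- you say you ``expect'' it follows from a fusion. As sketched, the fusion does not engage the actual difficulty: the requirement attached to a single $u\subseteq\{a_0,\dots,a_{n-1}\}$ is ``$u\in\overline W$,'' a statement quantified over \emph{all} $C\in\mathscr F$, and it is not the sort of requirement one can ``meet inside $C_n$'' by finitely many shrinkings at stage $n$ (already for $u=\{a_0\}$ there is no a priori reason that the set of $i$ with $\{i\}\in\overline W$ belongs to $\mathscr F$). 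The Mathias-style fusion you invoke secures such requirements by \emph{deciding}, for each $u$, which side of a partition of $\omega$ is large --- a step that uses that $\mathscr U$ is an ultrafilter; here $\mathscr F$ is only assumed to be a selective filter, and that it is an ultrafilter is a \emph{conclusion} of the theorem (via part (ii)), so it cannot be used. The paper circumvents this by an induction on word length: Lemma~\ref{Sirota1} (an open $U$ with $\emptyset\in\overline{\omega\cap U}$ becomes open when $\emptyset$ is adjoined --- the only place selectivity enters, via $\tau_L=\tau_M$), Lemma~\ref{Sirota2}, and the inductive step of Lemma~\ref{Sirota3}, which translates by a single letter to drop the length, finishing with $\overline X=\bigcup_k\overline{B_k(\omega_{\mathscr F})\cap X}$, which needs $\tau_M=\tau_{\mathrm{indlim}}$. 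To complete your (i) you would have to reproduce something like this induction or give an honest proof of the capturing lemma; as written, the central claim is asserted, not proved.

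Part (ii) is correct and takes a genuinely different route from the paper. The paper applies the implication (iv)$\Rightarrow$(i) of Theorem~\ref{Thuemmel} to the sets $\{\emptyset\}\cup\bigcup_i[i,A_i]$, whose closures it computes explicitly, and then invokes Corollary~\ref{cor-edfilter}; you instead manufacture, from a partition $\{C_n\}$ with no selector, two disjoint open sets $W_0,W_1$ both accumulating at $\emptyset$. Your verifications are sound: the cones above $\max s$ do not disturb the two smallest elements of $s$, so $W_0$ and $W_1$ are open even in $\tau_M$; every $D\in\mathscr F$ meets some piece twice (otherwise it extends to a selector) and meets two distinct pieces (otherwise some $C_n\in\mathscr F$), so $\emptyset$ lies in both closures; and the transfer to $\tau_{\mathrm{free}}$ via description~III works because the ball $\{x\colon d(x,*)<\frac12\}$ is in $\mathscr F$. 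This is arguably more self-contained than the paper's argument, since it does not pass through Theorem~\ref{Thuemmel} at all for this half.
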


\begin{proof} 
The proof of (i) is essentially contained in Sirota's construction 
of a (consistent) example of an extremally disconnected group~\cite{Sirota}. In~\cite{Sirota} 
Sirota introduced the notion of a $k$-ultrafilter on $\omega$ and proved that 
$B^{\mathrm{lin}}(\omega_{\mathscr U})$ is extremally disconnected for any $k$-ultrafilter 
$\mathscr U$. A $k$-ultrafilter is defined as an ultrafilter satisfying two conditions, 
one of which is precisely the selectivity condition (v) in Theorem~\ref{ramseychar}. 
We use only selectivity and do not assume our filter to be an ultrafilter; 
at that, thanks to Theorem~\ref{Thuemmel}, 
the proof presented below is far simpler than Sirota's original proof.  

\begin{lemma}
\label{Sirota1} 
If $U$ is an open set in $B^{\mathrm{lin}}(\omega_{\mathscr F})$ and $\emptyset \in 
\overline{\omega\cap U}$, then $\{\emptyset\}\cup U$ is open, i.e., $\emptyset \in \Int 
\overline{U}$. 
\end{lemma}

\begin{proof}
Note that $\emptyset\in \overline{\omega\cap U}$ if and only if $U\supset A$ for some $A\in 
\mathscr F$. Since $U$ is open in $\tau_M$, it must contain each $i\in A\cap \omega$ together with 
its open neighborhood. Thus, $U$ contains a set of the the form $[i, A_i]$, $A_i\in \mathscr F$, 
for each $i\in A$, and any set of the form  $\{\emptyset\}\cup\bigcup_{i\in 
A} [i, A_i]$, where $A_i\in \mathscr F$, is open in $\tau_L=\tau_M$. 
\end{proof}

\begin{lemma}
\label{Sirota2}
If $X\subset B^{\mathrm{lin}}(\omega_{\mathscr F})$ and $\emptyset \in 
\overline{X}\setminus X$, then $\emptyset \in \overline{\omega\cap \overline{X}}$. 
\end{lemma}

\begin{proof}
Suppose that, on the contrary, $\emptyset \notin \overline{\omega \cap \overline{X}}$. Then 
$\emptyset \in \overline{\omega \setminus \overline{X}}$ (because $\emptyset \in 
\overline{\omega}$). Let $U=B^{\mathrm{lin}}(\omega_{\mathscr F}) \setminus \overline{X}$. We have 
$\omega\cap U= \omega\setminus \overline {X}$, so that by Lemma~\ref{Sirota1}  $W= 
U\cup\{\emptyset\}$ is an open neighborhood of $\emptyset$. 
We also have $\emptyset \notin X$, $U\cap X = \emptyset$, and $W \cap X=\emptyset$, which 
contradicts the assumption $\emptyset \in \overline{X}$.
\end{proof}

\begin{lemma}
\label{Sirota3}
If $U$ is an open subset of $B^{\mathrm{lin}}(\omega_{\mathscr F})$, $k\in \mathbb N$, and 
$\emptyset \in \overline{B_k(\omega_{\mathscr F}) \cap U}$, then $\emptyset 
\in \Int \overline{U}$. 
\end{lemma}

\begin{proof}
We prove the lemma by induction on $k$. For $k=0$, $B_k(\omega_{\mathscr F})=\{\emptyset\}$, and we 
have $\emptyset \in U\subset \Int \overline{U}$. Suppose that $n> 0$ and the required assertion 
holds for all $k<n$. Let us prove that if $\emptyset \in \overline{B_n(\omega_{\mathscr F}) \cap 
U}$, then $\emptyset \in \Int \overline{U}$.  

First, we show that 
$\omega \cap \overline{B_n(\omega_{\mathscr F}) \cap U}\subset \Int \overline{U}$. Take any $i\in 
\omega$ such that $i\in \overline{B_n(\omega_{\mathscr F}) \cap U}$. An arbitrary open neighborhood 
of $i$ contains a neighborhood of the form $[i, A]$, where $A\in \mathscr F$, $\min A> i$, and 
we have $i\in \overline{[i, A]\cap B_n(\omega_{\mathscr F}) \cap U}$. Note that 
$(i+([i, A]\cap B_n(\omega_{\mathscr F})))\subset B_{n-1}(\omega_{\mathscr F})$, because 
$\min A > i$ and $i+i = \emptyset$. On the other hand, $\emptyset = i+i\in 
\overline{(i+([i, A]\cap B_n(\omega_{\mathscr F})))\cap (i+U)}$. By the induction hypothesis, we 
have $\emptyset \in \Int\overline{i+U}$, whence $i\in \Int\overline{U}$. 

Thus, $\omega \cap \overline{B_n(\omega_{\mathscr F}) \cap U}\subset \Int \overline{U}$. If 
$\emptyset \in U$, there is nothing to prove. Suppose that 
$\emptyset \notin U\supset B_n(\omega_{\mathscr F}) \cap U$. Then, by Lemma~\ref{Sirota2}, we have 
$\emptyset \in \overline{\omega \cap \overline{B_n(\omega_{\mathscr F}) \cap U}}\subset 
\overline{\omega \cap \Int \overline{U}}$, and Lemma~\ref{Sirota1} implies $\emptyset \in 
\Int\overline{\Int \overline{U}} = \Int \overline{U}$.
\end{proof}

To complete the proof of (i), it remains to recall that  
$\tau_M=\tau_{\mathrm{indlim}}$ for selective filters and, therefore, $\overline X = 
\bigcup_{k\in \omega} \overline{B_k(\omega_{\mathscr F}) \cap X}$ for any  $X\subset 
B^{\mathrm{lin}}(\omega_{\mathscr F})$. Thus, by Lemma~\ref{Sirota3}, we have 
$\emptyset \in \Int\overline{U}$ whenever $U$ is an open set and $\emptyset \in \overline U$. Since 
$B^{\mathrm{lin}}(\omega_{\mathscr F})$ is homogeneous, it follows that, for any open $U\subset  
B^{\mathrm{lin}}(\omega_{\mathscr F})$ and any $x\in \overline U$, we have $x\in \Int\overline{U}$, 
i.e., $\overline{U}$ is open. Thus, the free Boolean linear topological group 
$B^{\mathrm{lin}}(\omega_{\mathscr F})$ is extremally disconnected, and hence so is the free  
Boolean topological group $B(\omega_{\mathscr F})$, because these groups coincide for selective 
filters by Theorem~\ref{Thuemmel}\,(ii).

The proof of (ii) is based on the implication (iv)~$\Leftrightarrow$~(i) 
of Theorem~\ref{Thuemmel}: for any sequence of  $A_i\in \mathscr F$ with $\min A_i>i$, $i\in 
\omega$, the set $U=\bigcup_{i\in \omega} [i, A_i]$ is by definition open in both $\tau_M$ and 
$\tau_{\mathrm{free}}$, and $U'=\{\emptyset\}\cup\bigcup_{i\in \omega} [i, A_i]$ 
is closed in each of these topologies. Indeed, suppose that 
$s=\{i_1, i_2,\dots, i_n\}\notin U'$ and $i_1<i_2<\dots <i_n$. Then 
$i_k\notin A_{i_1}$ for some $k\in \{2, \dots , n\}$. Let $s'=s\setminus A_{i_1}$. Then 
$[s, A_{i_1}]$ is an open (in both topologies) neighborhood of $s$, and it does not 
intersect $U'$, because the least letter of each word in $[s, A_{i_1}]$ is $i_1$, and any such 
word contains $i_k\notin A_{i_1}$. Thus, $U'$ is the closure of $U$ in $\tau_M$ and 
$\tau_{\mathrm{free}}$. Therefore, $U'$ must be open in $\tau_{\mathrm{free}}$ 
(if $B(\omega_{\mathscr F})$ is extremally disconnected) or even in $\tau_M\supset 
\tau_{\mathrm{free}}$ (if $B^{\mathrm{lin}}(\omega_{\mathscr F})$ is extremally disconnected). 
In any case, the assertion (iv)~$\Leftrightarrow$~(i) of Theorem~\ref{Thuemmel} implies 
that $\mathscr F$ is a selective filter. It remains to apply Corollary~\ref{cor-edfilter}. 
\end{proof} 

Theorem~\ref{Ramsey} immediately implies the following (most likely, known) statement 
concerning filters. 

\begin{corollary}
\label{selective}
Each filter satisfying any of the equivalent conditions \textup{(iii)--(v)} in Theorem~\ref{ramseychar} 
is a Ramsey ultrafilter.
\end{corollary}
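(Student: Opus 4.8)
The plan is to deduce the statement directly from Theorem~\ref{Ramsey}, which carries all the content; the corollary is essentially the observation that parts~(i) and~(ii) of that theorem, read together, close into a loop. First I would recall that, in the terminology fixed just before Theorem~\ref{Thuemmel}, a \emph{selective filter} is by definition a filter satisfying any one of the conditions~(iii)--(v) of Theorem~\ref{ramseychar}; as noted in the discussion following that theorem, the implications $\text{(iii)}\Rightarrow\text{(iv)}\Rightarrow\text{(v)}\Rightarrow\text{(iii)}$ all go through for filters (not merely for ultrafilters), so these three conditions are genuinely equivalent in this generality and it is immaterial which one we start from.

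So let $\mathscr F$ be a filter on $\omega$ satisfying, say, condition~(iii); then $\mathscr F$ is selective. By Theorem~\ref{Ramsey}(i), the free Boolean linear topological group $B^{\mathrm{lin}}(\omega_{\mathscr F})$ is extremally disconnected. Now apply Theorem~\ref{Ramsey}(ii) to this very group: since $B^{\mathrm{lin}}(\omega_{\mathscr F})$ is extremally disconnected, $\mathscr F$ must be a Ramsey ultrafilter. This is exactly the assertion of the corollary, and nothing further is needed.

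It is worth stressing that there is no circularity here, even though the conclusion (``$\mathscr F$ is Ramsey'') formally implies the hypothesis (``$\mathscr F$ is selective''): the proof of Theorem~\ref{Ramsey}(i) uses only selectivity---through the equivalence $\text{(i)}\Leftrightarrow\text{(iv)}$ of Theorem~\ref{Thuemmel} together with Sirota-style arguments---and never the ultrafilter property, while the proof of~(ii) invokes Corollary~\ref{cor-edfilter} precisely in order to upgrade a selective filter to a Ramsey ultrafilter. The only real work---already carried out in the proofs of Theorems~\ref{Thuemmel} and~\ref{Ramsey}---is the topology showing, on one hand, that selectivity forces extremal disconnectedness of $B^{\mathrm{lin}}(\omega_{\mathscr F})$, and, on the other hand, that extremal disconnectedness of this group forces $\mathscr F$ to be a Ramsey ultrafilter. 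The corollary itself demands no argument beyond concatenating these two implications, so I expect no genuine obstacle at this stage.
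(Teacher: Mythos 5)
Your proposal is correct and is exactly the paper's argument: the paper states the corollary as an immediate consequence of Theorem~\ref{Ramsey}, namely the concatenation of part~(i) (selectivity of $\mathscr F$ implies $B^{\mathrm{lin}}(\omega_{\mathscr F})$ is extremally disconnected) with part~(ii) (extremal disconnectedness of that group implies $\mathscr F$ is a Ramsey ultrafilter). Your remark on non-circularity is a sensible sanity check but adds nothing beyond what the paper already does.
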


Free Boolean topological and free Boolean 
linear (that is, Mathias) topological groups on spaces associated with filters, as well 
as Boolean groups with other topologies determined by filters, are the main tool in the 
study of topological groups with extreme topological properties (see~\cite{ProtQ} and the 
references therein). However, free Boolean (linear) topological groups on filters 
arise also in more ``conservative'' domains. We conclude with mentioning an instance of 
this kind. 

The most elegant (in the author's opinion) example of a countable nonmetrizable Fr\'echet--Urysohn 
group was constructed by Nyikos in~\cite{Nyikos} under the relatively mild 
assumption $\mathfrak p = \mathfrak b$ (Hru\v s\'ak and Ramos-Garc\'\i a 
have recently proved that 
such an example cannot be constructed in ZFC~\cite{Hrusak}). 

It is clear from general considerations that test spaces most convenient for studying convergence 
properties which can be defined pointwise (such as the Fr\'echet--Urysohn property and 
the related $\alpha_i$-properties) are countable 
almost discrete spaces (that is, spaces 
of the form $\omega_{\mathscr F}$), 
and the most convenient test groups for studying such properties in topological groups  are 
those generated by such spaces, simplest among which are free Boolean linear 
topological groups. Thus, it is quite natural that Nyikos' example 
is $B^{\mathrm{lin}}(\omega_{\mathscr F})$ for a very cleverly constructed filter $\mathscr F$. 
In fact, he constructed it on $\omega\times \omega$ (which does not make any 
difference, of course) as the 
set of neighborhoods of the only nonisolated point in a $\Psi$-like space defined by using graphs 
of functions $\omega\to \omega$ from a special family. In the same paper Nyikos proved many 
interesting convergence properties of groups $B^{\mathrm{lin}}(\omega_{\mathscr F})$ for arbitrary 
filters $\mathscr F$ on $\omega$. We do not give any more details here: the 
interested reader will gain much more benefit and pleasure from reading Nyikos' original 
paper.

\end{document}